\documentclass[ams,12pt]{article}
\usepackage{geometry}
\geometry{left=3cm, right=2cm, top=3.5cm, bottom=2.5cm}
\usepackage{amsmath,amssymb,amsfonts,amsthm}
\bibliographystyle{doc14}
\usepackage{lmodern}
\usepackage{stmaryrd}
\usepackage{mathrsfs}
\usepackage{graphicx} 
\usepackage{verbatim}
\usepackage{ulem}
\usepackage{indentfirst}
\usepackage{extarrows}
\usepackage{mathrsfs}
\usepackage{bm}
\usepackage [T1] {fontenc}
\usepackage{fancyhdr}
\pagestyle{fancy}

\fancyhead{} 
\oddsidemargin=0mm
\evensidemargin=0mm

\pagestyle{fancy}

\fancyhead{} 
\oddsidemargin=0mm
\evensidemargin=0mm

\newtheorem{Th}{\bf Theorem}[section]

\theoremstyle{definition} 
\newtheorem{remark}{\bf Remark}[section]

\usepackage{amssymb}
\newtheorem{Lem}[Th]{\bf Lemma}
\newtheorem{Pro}[Th]{\bf Proposition}

\numberwithin{equation}{section}
\usepackage{hyperref}

\usepackage{amsthm}
\setlength{\headheight}{15pt}

\begin{document}

\author{ZHAOJUN XING}
\title{\bf{A single-component regularity criterion and Inviscid limit of axially symmetric MHD-Boussinesq system}}

\date{}

\maketitle

\begin{abstract}
In this paper, we first give a critical BKM-type blow-up criterion that only involves the horizontal swirl component of the velocity for the inviscid axially symmetric MHD-Boussinesq system. Moreover, we consider the inviscid limit of the viscous MHD-Boussinesq system, and the convergence rate for the viscosity coefficient tending to zero is obtained.
\end{abstract}

\thispagestyle{plain}\pagestyle{fancy} \fancyhf{}
\fancyhead[C]{ZHAOJUN XING} 
\fancyhead[CO]{A single-component regularity criterion and Inviscid limit} 
\fancyhead[L,RO]{\thepage} 
\fancyfoot{\empty}
\let\thefootnote\relax\footnotetext{2020 Mathematics Subject Classification. 35Q35, 76D03, 35B07.}
 \let\thefootnote\relax\footnotetext{Key words and phrases. MHD-Boussinesq, Axially symmetric, Regularity criterion, One component, Inviscid limit.}
\section{Introduction}
MHD-Boussinesq system models the convection of an incompressible flow, which is driven by the buoyancy effect of the thermal or density field and the Lorentz force generated by the fluid magnetic field. In addition, it is closely related to Rayleigh-B\'enard convection. This convection occurs in a horizontal layer of conductive fluid heated from below, with the effect of the magnetic field. For a more detailed physical background, interested readers are referred to \cite{18,20,21,25}. In the following, we present the 3D MHD-Boussinesq system:
\begin{equation}\label{eq1.1}
\left\{\begin{array}{l}
\partial_t u+u \cdot \nabla u+\mu\Delta u+\nabla p=h \cdot \nabla h+\rho e_3, \\[2mm]
\partial_t h+u \cdot \nabla h-h \cdot \nabla u-\nu \Delta h=0, \\[2mm]
\partial_t \rho+u \cdot \nabla \rho-\kappa \Delta \rho=0, \\[2mm]
\nabla \cdot u=\nabla \cdot h=0.
\end{array}\right.
\end{equation}
Here $u \in \mathbb{R}^3$ stands for the velocity and $h \in \mathbb{R}^3$ stands for the magnetic field. $p \in \mathbb{R}$ denotes the pressure. $\mu > 0$, $\nu>0$ and $\kappa>0$ denote the constant kinematic viscosity, magnetic diffusivity, and thermal diffusivity, respectively.

The MHD-Boussinesq system consists of a coupling between the Boussinesq equation and the magnetohydrodynamic equations. For the full MHD-Boussinesq system, there are some works concentrated on the global well-posedness of weak and strong solutions. See \cite{01,02} and references therein for 2D cases. In the 3D case, Larios-Pei \cite{14} proved the local well-posedness results in Sobolev space. Liu-Bian-Pu \cite{12} proved the global well-posedness of strong solutions with nonlinear damping terms in the momentum equations.  Li \cite{15} proved the regularity criterion, which only involves the horizontal swirl component of the vorticity field, to a class of three-dimensional axisymmetric MHD-Boussinesq system without magnetic impedance and thermal diffusivity. Later Li-Pan \cite{16} proved the related criterion that only applies the horizontal swirl component of the velocity field.

In recent years, Bian-Pu \cite{03} proved the global regularity of a family of axially symmetric large solutions to the MHD-Boussinesq system without magnetic resistivity and thermal diffusivity under the assumption that the support of the initial thermal fluctuation is away from the z-axis and its projection on to the z-axis is compact. Later, this result was improved by Pan \cite{26} by removing the assumption on the data of the thermal fluctuation.

Our first aim is to prove a single-component regularity criterion of the 3D axially symmetric inviscid MHD-Boussinesq system in Sobolev space $H^m$$(\forall m\geq 3)$. Setting $\mu=0$ of (\ref{eq1.1}), we can get
\begin{equation}\label{eq1.2}
\left\{\begin{array}{l}
\partial_t u+u \cdot \nabla u+\nabla p=h \cdot \nabla h+\rho e_3, \\[2mm]
\partial_t h+u \cdot \nabla h-h \cdot \nabla u-\nu \Delta h=0, \\[2mm]
\partial_t \rho+u \cdot \nabla \rho-\kappa \Delta \rho=0, \\[2mm]
\nabla \cdot u=\nabla \cdot h=0.
\end{array}\right.
\end{equation}
In the cylindrical coordinates $(r, \theta, z)$, i.e., for $x=\left(x_1, x_2, x_3\right) \in \mathbb{R}^3$,
$$
r=\sqrt{x_1^2+x_2^2}, \quad \theta=\arctan \frac{x_2}{x_1}, \quad z=x_3,
$$
a solution of (\ref{eq1.2}) is called an axially symmetric solution, if and only if
$$
\left\{\begin{array}{l}
u=u_r(t, r, z)\boldsymbol{e_r}+u_\theta(t, r, z) \boldsymbol{e_\theta}+u_z(t, r, z) \boldsymbol{e_z}, \\[2mm]
h=h_r(t, r, z) \boldsymbol{e_r}+h_\theta(t, r, z) \boldsymbol{e_\theta}+h_z(t, r, z) \boldsymbol{e_z}, \\[2mm]
\rho=\rho(t, r, z),
\end{array}\right.
$$
satisfy the system (\ref{eq1.2}). Here the basis vectors $\boldsymbol{e}_r, \boldsymbol{e}_\theta, \boldsymbol{e}_z$ are
$$
\boldsymbol{e}_r=\left(\frac{x_1}{r}, \frac{x_2}{r}, 0\right), \quad \boldsymbol{e}_\theta=\left(-\frac{x_2}{r}, \frac{x_1}{r}, 0\right), \quad \boldsymbol{e}_z=(0,0,1).
$$

From the local existence and uniqueness results, it is clear that one only needs to assume $h_0 \cdot \boldsymbol{e}_r=h_0 \cdot \boldsymbol{e}_z \equiv 0$, then vanishing of $h_r$ and $h_z$ holds for all time. In this case, Choosing $\nu=\kappa=1$ for simplicity, (\ref{eq1.2}) can be simplified to
\begin{equation}\label{eq1.3}
\left\{\begin{array}{l}
\partial_t u_r+\left(u_r \partial_r+u_z \partial_z\right) u_r+\partial_r P=-\frac{\left(h_\theta\right)^2}{r}+\frac{u_\theta^2}{r}, \\[2mm]
\partial_t u_\theta+\left(u_r \partial_r+u_z \partial_z\right) u_\theta=-\frac{u_\theta u_r}{r}, \\[2mm]
\partial_t u_z+\left(u_r \partial_r+u_z \partial_z\right) u_z+\partial_z P=\rho, \\[2mm]
\partial_t h_\theta+\left(u_r \partial_r+u_z \partial_z\right) h_\theta-\frac{h_\theta u_r}{r}=\left(\Delta-\frac{1}{r^2}\right) h_\theta, \\[2mm]
\partial_t \rho+\left(u_r \partial_r+u_z \partial_z\right) \rho-\Delta \rho=0, \\[2mm]
\nabla \cdot u=\partial_r u_r+\frac{u_r}{r}+\partial_z u_z=0 ,
\end{array}\right.
\end{equation}
where $P=p+\frac{1}{2}|h_\theta|^2$.

Denoting
$$
\Phi_{k, \alpha}(t)\leq c\underbrace{\exp(c\exp(\cdot\cdot\cdot \exp(ct^\alpha)))}_{k\ times\ exponents},\quad c>0, \quad k\geq 1.
$$

Our main result reads:
\begin{Th}\label{Th1.1}
Given $m \in \mathbb{N}$ and $m \geq 3$. Suppose $(u, h, \rho)$ is the unique strong solution of (\ref{eq1.2}) with initial data $\left(u_0, h_0, \rho_0\right) \in H^m \times H^m \times H^m, r \rho_0 \in L^2$ and $\nabla \cdot u_0=h_0 \cdot \boldsymbol{e}_r=h_0 \cdot \boldsymbol{e}_z \equiv 0$. Then, $(u, h, \rho)(t,\cdot)$ can be smoothly extended before $T_\ast$ if and only if a Beale-Kato-Majda-type condition on the swirl part of the velocity (\ref{eq1.4}) holds.
\begin{equation}\label{eq1.4}
\int_0^{T_\ast}\|\nabla\times (u_\theta \mathbf{e_{\theta}})(t,\cdot)\|_{L^\infty}dt\leq C_\ast<\infty.
\end{equation}
Now, $(u, h, \rho)(t,\cdot)$ satisfies the following temporal asymptotic property:
$$
\|(u,h,\rho)(t,\cdot)\|_{H^m}^2\leq \Phi_{4,3}(t),\ \ \ \ \forall t\leq T_\ast.
$$
\end{Th}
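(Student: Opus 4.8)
The ``only if'' direction is immediate: if $(u,h,\rho)$ extends smoothly beyond $T_\ast$ then $u_\theta\boldsymbol{e_\theta}\in C([0,T_\ast];H^m)$, hence $\nabla\times(u_\theta\boldsymbol{e_\theta})\in C([0,T_\ast];H^{m-1})\hookrightarrow C([0,T_\ast];L^\infty)$ since $m-1\ge2>\tfrac32$, and (\ref{eq1.4}) follows. The content is the ``if'' direction together with the explicit bound, which I would obtain from a cascade of four a priori estimates of increasing order, each controlled in terms of the previous one and of the quantity $K:=\int_0^{T_\ast}\|\nabla\times(u_\theta\boldsymbol{e_\theta})\|_{L^\infty}\,dt$, and closed at the top order by a Beale--Kato--Majda logarithmic Sobolev inequality; the four nested exponentials of $\Phi_{4,3}$ record the four Gr\"onwall steps. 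Since the constraint $h_0\cdot\boldsymbol{e_r}=h_0\cdot\boldsymbol{e_z}\equiv0$ is preserved, one works throughout with the reduced axisymmetric system (\ref{eq1.3}), and a first observation is that in cylindrical coordinates $\nabla\times(u_\theta\boldsymbol{e_\theta})=-\partial_z u_\theta\,\boldsymbol{e_r}+\tfrac1r\partial_r(ru_\theta)\,\boldsymbol{e_z}=\omega_r\boldsymbol{e_r}+\omega_z\boldsymbol{e_z}$ is the ``swirl part'' of the vorticity $\omega=\nabla\times u$, so (\ref{eq1.4}) is the assertion $K<\infty$ with $K=\int_0^{T_\ast}\|(\omega_r,\omega_z)\|_{L^\infty}\,dt$.

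Two transport structures of (\ref{eq1.3}) are then decisive. The circulation $\Gamma:=ru_\theta$ solves the \emph{unforced} transport equation $\partial_t\Gamma+(u_r\partial_r+u_z\partial_z)\Gamma=0$ --- unforced precisely because the Lorentz and buoyancy forces have no azimuthal component --- so $\|\Gamma(t)\|_{L^p}=\|\Gamma_0\|_{L^p}$ for all $p\in[2,\infty]$ by incompressibility; and $b:=h_\theta/r$ solves the advection--diffusion equation $\partial_t b+(u_r\partial_r+u_z\partial_z)b=(\partial_r^2+\tfrac3r\partial_r+\partial_z^2)b$, whose elliptic part is a (five-dimensional) Laplacian, so $\|b(t)\|_{L^\infty}\le\|b_0\|_{L^\infty}$ by the maximum principle, while $\rho$ solves the genuine advection--diffusion equation $\partial_t\rho+u\cdot\nabla\rho=\Delta\rho$. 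I would also record the elementary recovery bound $\|u_\theta/r\|_{L^\infty}\le\tfrac12\|\omega_z\|_{L^\infty}$, obtained from $ru_\theta(r,z)=\int_0^r s\,\omega_z(s,z)\,ds$; together with $\partial_z u_\theta=-\omega_r$ and $\partial_r u_\theta=\omega_z-u_\theta/r$ this gives $\|\nabla(u_\theta\boldsymbol{e_\theta})\|_{L^\infty}\lesssim\|(\omega_r,\omega_z)\|_{L^\infty}$, so the azimuthal part of $\nabla u$ is directly controlled by (\ref{eq1.4}) and the genuine unknown is the meridian field $(u_r,u_z)$, i.e.\ the component $\omega_\theta$. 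All the initial quantities above are finite because $(u_0,h_0,\rho_0)\in H^m$ with $m\ge3$ and $r\rho_0\in L^2$.

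The cascade then runs as follows. \emph{(i) $L^2$ level.} Pairing the velocity and magnetic equations with $(u,h)$ and the density equation with $\rho$, together with a weighted density estimate (where $r\rho_0\in L^2$ enters), gives $\|(u,h,\rho)(t)\|_{L^2}^2+\|r\rho(t)\|_{L^2}^2+\int_0^t(\|\nabla h\|_{L^2}^2+\|\nabla\rho\|_{L^2}^2)\,d\tau\le\Phi_{1,1}(t)$. \emph{(ii) Vorticity level.} From the meridian momentum equations of (\ref{eq1.3}), $\zeta:=\omega_\theta/r$ solves
\[
\partial_t\zeta+(u_r\partial_r+u_z\partial_z)\zeta=\partial_z\!\left(\Bigl(\tfrac{u_\theta}{r}\Bigr)^2-b^2\right)-\frac{\partial_r\rho}{r},
\]
which I would estimate coupled with $H^1$ (and then $H^2$) parabolic estimates for $h$ and $\rho$ --- the diffusions $\nu\|\nabla^2 h\|_{L^2}^2$ and $\|\nabla^2\rho\|_{L^2}^2$ absorbing the interaction terms --- handling the centrifugal term by $\|u_\theta/r\|_{L^\infty}\le\tfrac12\|\omega_z\|_{L^\infty}$ and $\|\Gamma_0\|_{L^\infty}$, the Lorentz term by $\|b(t)\|_{L^\infty}\le\|b_0\|_{L^\infty}$, and the buoyancy term by the weighted density bound of (i) and a Hardy inequality in the meridian plane; a Gr\"onwall with the finite time-weight $K$ then bounds $\|\omega_\theta(t)\|_{L^2}$ and $\|(\nabla h,\nabla\rho)(t)\|_{L^2}$, hence $\|\omega(t)\|_{L^2}=\|\nabla u(t)\|_{L^2}$, by $\Phi_{2,\cdot}(t)$. \emph{(iii) Sup-norm level.} Revisiting the $\zeta$-equation along the now well-defined flow, together with parabolic maximal-regularity and Gagliardo--Nirenberg estimates upgrading (ii) to $\|\nabla^2(h,\rho)(t)\|_{L^2}$ and then to $\int_0^t(\|\nabla h\|_{L^\infty}+\|\nabla\rho\|_{L^\infty})\,d\tau$, yields control of $\|\omega_\theta(t)\|$ in a norm strong enough for the logarithmic inequality, hence, with (\ref{eq1.4}), of $\|\omega(t)\|_{L^\infty}$ by $\Phi_{3,\cdot}(t)$. \emph{(iv) $H^m$ level.} Applying $\nabla^m$ to (\ref{eq1.2}), pairing with $\nabla^m(u,h,\rho)$, using Kato--Ponce commutator estimates and letting the magnetic and thermal diffusion absorb the top-order $h$- and $\rho$-contributions yields
\[
\frac{d}{dt}\|(u,h,\rho)\|_{H^m}^2\lesssim\bigl(1+\|\nabla u\|_{L^\infty}+\|\nabla h\|_{L^\infty}+\|\nabla\rho\|_{L^\infty}\bigr)\|(u,h,\rho)\|_{H^m}^2;
\]
inserting the logarithmic Sobolev inequality $\|\nabla u\|_{L^\infty}\lesssim1+\|u\|_{L^2}+\|\omega\|_{L^\infty}\log\!\bigl(e+\|(u,h,\rho)\|_{H^m}\bigr)$ together with the bounds of (iii), a Gr\"onwall-in-$\log$ argument gives $\|(u,h,\rho)(t)\|_{H^m}^2\le\Phi_{4,3}(t)$.

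The hard part is step (iii): squeezing $L^\infty$-type control of $\omega_\theta$ --- equivalently, of the full $\nabla u$ --- out of the single-component hypothesis (\ref{eq1.4}). The centrifugal forcing $\partial_z((u_\theta/r)^2)$ in the $\zeta$-equation carries a $z$-derivative of $u_\theta/r$ near the axis (only $u_\theta/r$ itself, not its derivative, being controlled directly by (\ref{eq1.4})), while the buoyancy forcing $-\partial_r\rho/r$ has neither a favourable sign nor a maximum principle; closing this step therefore requires combining the transport invariance of $\Gamma=ru_\theta$, the parabolic smoothing of $b=h_\theta/r$ and of $\rho$, Hardy-type inequalities near $r=0$, and a delicate interpolation, all while carefully bookkeeping the exponential losses so as to land exactly on $\Phi_{4,3}$. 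The inviscidity of the velocity equation is precisely what forces the Beale--Kato--Majda mechanism at the top order rather than a direct parabolic estimate.
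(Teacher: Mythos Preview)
Your overall architecture --- basic energy, then a vorticity-level bound on $\zeta=\omega_\theta/r$, then an $L^\infty$ upgrade, then the BKM/Kato--Ponce closure at order $m$ --- matches the paper's, and your identifications $\nabla\times(u_\theta\boldsymbol{e_\theta})=\omega_r\boldsymbol{e_r}+\omega_z\boldsymbol{e_z}$, $\|u_\theta/r\|_{L^\infty}\le\tfrac12\|\omega_z\|_{L^\infty}$, the maximum principle for $b=h_\theta/r$, and the logarithmic Gr\"onwall at the top are all exactly what the paper uses. The gap is in step~(ii): your proposed handling of the two forcing terms in the $\zeta$-equation does not close.

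Concretely, the centrifugal term $\partial_z\bigl((u_\theta/r)^2\bigr)=-2(u_\theta/r)\,(\omega_r/r)$ requires an $L^p$ bound on $J:=\omega_r/r$, and neither $\|u_\theta/r\|_{L^\infty}$ nor $\|\Gamma\|_{L^\infty}=\|r u_\theta\|_{L^\infty}$ gives this; $J$ carries a genuine $r^{-1}$ singularity at the axis that no amount of information on $u_\theta$ alone removes. Likewise, the buoyancy forcing $-\partial_r\rho/r$ is not controlled by the weighted estimate for $r\rho$ together with a Hardy inequality: the weight goes the wrong way. The paper's device is to promote both quantities to independent unknowns, $J=\omega_r/r$ and $\mathcal{N}=\partial_r\rho/r$, derive their own equations
\[
\partial_t J+u\cdot\nabla J=(\omega_r\partial_r+\omega_z\partial_z)\frac{u_r}{r},\qquad
\partial_t\mathcal{N}+u\cdot\nabla\mathcal{N}-\Bigl(\Delta+\frac{2}{r}\partial_r\Bigr)\mathcal{N}=\partial_z u_z\,\mathcal{N}-\partial_r u_z\,\frac{\partial_z\rho}{r},
\]
and close the four-tuple $(\Omega,J,\mathcal{N},\nabla\mathcal{H})$ jointly in $L^2\cap L^6$. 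The key algebraic input you are missing is the Biot--Savart-type bound $\|\nabla(u_r/r)\|_{L^p}\lesssim\|\Omega\|_{L^p}$ (the paper's Lemma~2.5): it turns the right-hand side of the $J$-equation into $\|(\omega_r,\omega_z)\|_{L^\infty}\|\Omega\|_{L^p}$, so that a Gr\"onwall with weight exactly $K=\int_0^t\|\nabla\times(u_\theta\boldsymbol{e_\theta})\|_{L^\infty}$ closes $\|(\Omega,J)\|_{L^2\cap L^6}$ in terms of $\int_0^t\|\mathcal{N}\|+\int_0^t\|\partial_z\mathcal{H}\|$, and the parabolic equations for $\mathcal{N}$ and $\mathcal{H}$ then feed back $\|\Omega\|_{L^2\cap L^6}$ through $\|\nabla b\|_{L^p}\lesssim\|\omega_\theta\|_{L^p}$. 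Once $\|\Omega\|_{L^2\cap L^6}\le\Phi_{1,3}$ is in hand, the paper gets $\|u_r/r\|_{L^\infty}\lesssim\|\Omega\|_{L^2\cap L^6}$ by interpolation and Lemma~2.5, which drives the $\Phi_{2,3}$ bounds for $h_\theta$, $\nabla u$, $\nabla^2(h,\rho)$ and then $\|\omega_\theta\|_{L^\infty}$; from there your step~(iv) is exactly right, and the double exponential in the BKM closure accounts for the jump from $\Phi_{2,3}$ to $\Phi_{4,3}$. In short: the missing idea is not an interpolation trick but the reformulated system for $(\Omega,J,\mathcal{N},\nabla\mathcal{H})$ together with the estimate $\|\nabla(u_r/r)\|_{L^p}\lesssim\|\Omega\|_{L^p}$.
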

\noindent
\begin{remark}
When $u_\theta=0$, the global well-posedness result for the inviscid axisymmetric MHD-Boussinesq system can be found in \cite{17}. If $h=0$, \cite{15} gave a single-component Beale-Kato-Majda-type regularity criterion for inviscid Boussinesq equations. Our first result can be viewed as a generalization of the above papers.
\end{remark}

\qed

The problem of vanishing viscosity limit is one of the most challenging topics in fluid dynamics. For the inviscid limit problems on bounded domains, we refer readers to \cite{10,27} and \cite{06}. Since Kato \cite{10} and Swann \cite{27}, many authors have studied the convergence of the solution to the Navier-Stokes equation as the viscosity approaches zero. In the last decades, Itoh \cite{08} and Itoh Tani \cite{09} investigated the inviscid limit of the equation for non-homogeneous incompressible fluids and demonstrated the convergence in $H^2$ and $W^{1,p}$ $(p>3)$, respectively.
D\'iaz and Lerena \cite{07} investigated the inviscid and non-resistive limit in the Cauchy problem of an incompressible homogeneous MHD system by using the $C^0$-semigroup technique developed in \cite{11}.
Majda \cite{22} proved that when $\mu\rightarrow0$, the solution $u_\mu$ of the Navier-Stokes equation converges to the unique solution of the Euler equation in the $L^2$ norm and the convergence rate is $(\mu t)^{\frac{1}{2}}$, assuming $u_0\in H^s$, $s>\frac{d}{2}+2$. Later, Masmoudi \cite{23} improved this result and demonstrated the convergence in the $H^s$ norm under the weaker assumptions $u_0\in H^s$, $s>\frac{d}{2}+1$. Recently, Maafa and Zerguine \cite{24} studied the inviscid limit of MHD system in Besov spaces.

Our second goal is to examine the inviscid limit of the following 3D MHD-Boussinesq system with $\kappa=\nu=1$ of (\ref{eq1.1}):
\begin{equation}\label{eq1.5}
\left\{\begin{array}{l}
\partial_t u_\mu+u_\mu \cdot \nabla u_\mu-\mu\Delta u_\mu+\nabla p_\mu=h_\mu \cdot \nabla h_\mu+\rho_\mu e_3, \\[2mm]
\partial_t h_\mu+u_\mu \cdot \nabla h_\mu-h_\mu \cdot \nabla u_\mu-\Delta h_\mu=0, \\[2mm]
\partial_t \rho_\mu+u_\mu \cdot \nabla \rho_\mu-\Delta \rho_\mu=0, \\[2mm]
\nabla \cdot u_\mu=\nabla \cdot h_\mu=0.
\end{array}\right.
\end{equation}
Here, the solution is also supposed to be axially symmetric:
$$
\left\{\begin{array}{l}
u_\mu=u_{\mu,r}(t, r, z) \boldsymbol{e_r}+u_{\mu,\theta}(t, r, z) \boldsymbol{e_\theta}+u_{\mu,z}(t, r, z) \boldsymbol{e_z}, \\[2mm]
h_\mu=h_{\mu,r}(t, r, z) \boldsymbol{e_r}+h_{\mu,\theta}(t, r, z) \boldsymbol{e_\theta}+h_{\mu,z}(t, r, z) \boldsymbol{e_z}, \\[2mm]
\rho_\mu=\rho_\mu(t, r, z).
\end{array}\right.
$$
When the viscosity coefficient $\mu$ is vanishing, the MHD-Boussinesq system (\ref{eq1.5}) appears to degenerate into the system (\ref{eq1.2}). Our second main result quantified the rate of this convergence:
\begin{Th}\label{Th1.2}
Let $(u, h, \rho)$ and $(u_\mu, h_\mu, \rho_\mu)$ are strong solutions to the system (\ref{eq1.2}) and the system (\ref{eq1.5}) respectively with the same initial data. Then under the conditions as in Theorem \ref{Th1.1}, the following asymptotic behavior holds:
$$
\left\|u_\mu-u\right\|_{L_t^{\infty} L^2}+\left\|h_\mu-h\right\|_{L_t^{\infty}L^2}+\left\|\rho_\mu-\rho\right\|_{L_t^{\infty} L^2}\leq (\mu t)\Phi_{4,3}(t).
$$
\end{Th}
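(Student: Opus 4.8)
The plan is to estimate the difference of the two solutions directly via a standard energy argument on the system satisfied by the perturbation. Set
$$
\delta u = u_\mu - u, \qquad \delta h = h_\mu - h, \qquad \delta\rho = \rho_\mu - \rho.
$$
Subtracting (\ref{eq1.2}) from (\ref{eq1.5}) yields a system for $(\delta u, \delta h, \delta\rho)$ in which the first equation carries the extra forcing term $-\mu\Delta u_\mu = -\mu\Delta(\delta u) - \mu\Delta u$, the diffusion terms $-\Delta(\delta h)$ and $-\Delta(\delta\rho)$ survive for the magnetic and density perturbations, and all the nonlinearities are bilinear, so their differences split into commutator-type pieces such as $\delta u\cdot\nabla u_\mu + u\cdot\nabla(\delta u)$ and similarly for $h\cdot\nabla h$ and $\rho e_3$. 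The pressure gradient $\nabla(\delta p)$ is annihilated upon testing against $\delta u$ since $\nabla\cdot\delta u = 0$, and $\mu\Delta(\delta u)$, $\Delta(\delta h)$, $\Delta(\delta\rho)$ produce good dissipation terms $\mu\|\nabla\delta u\|_{L^2}^2$, $\|\nabla\delta h\|_{L^2}^2$, $\|\nabla\delta\rho\|_{L^2}^2$ with the correct sign. The only term that is not controlled by the perturbation itself is $\mu\langle \Delta u, \delta u\rangle = -\mu\langle\nabla u, \nabla\delta u\rangle$, which after Young's inequality contributes $\frac{\mu}{2}\|\nabla\delta u\|_{L^2}^2 + \frac{\mu}{2}\|\nabla u\|_{L^2}^2$; the first piece is absorbed by the dissipation and the second is the source of the $\mu$ prefactor in the final bound.

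The key steps, in order, are: (i) write the perturbation system explicitly and take the $L^2$ inner product of the three equations with $\delta u$, $\delta h$, $\delta\rho$ respectively, adding them; (ii) cancel the pressure and the transport parts $u\cdot\nabla(\delta u)$, $u\cdot\nabla(\delta h)$, $u\cdot\nabla(\delta\rho)$ by incompressibility, and handle the genuinely coupling terms $h\cdot\nabla\delta h$ versus $\delta h\cdot\nabla h$ using the antisymmetry structure that is standard for MHD (the highest-order magnetic interactions cancel, leaving terms of the form $\int \delta h\cdot\nabla u\cdot\delta h$ etc.); (iii) bound every remaining nonlinear term by $C(\|\nabla u\|_{L^\infty} + \|\nabla h\|_{L^\infty} + 1)(\|\delta u\|_{L^2}^2 + \|\delta h\|_{L^2}^2 + \|\delta\rho\|_{L^2}^2)$, together with the curvature/lower-order terms coming from the axisymmetric structure ($u_\theta^2/r$, $h_\theta^2/r$, etc.), which are handled exactly as in the proof of Theorem \ref{Th1.1} and contribute the same type of coefficient; (iv) deduce
$$
\frac{d}{dt}\big(\|\delta u\|_{L^2}^2 + \|\delta h\|_{L^2}^2 + \|\delta\rho\|_{L^2}^2\big) \leq C\,A(t)\,\big(\|\delta u\|_{L^2}^2 + \|\delta h\|_{L^2}^2 + \|\delta\rho\|_{L^2}^2\big) + \mu\|\nabla u(t)\|_{L^2}^2,
$$
where $A(t) = 1 + \|\nabla u(t)\|_{L^\infty} + \|\nabla h(t)\|_{L^\infty}$; (v) apply Grönwall's inequality with vanishing initial perturbation to get
$$
\|\delta u\|_{L^2}^2 + \|\delta h\|_{L^2}^2 + \|\delta\rho\|_{L^2}^2 \leq \mu \int_0^t \|\nabla u(s)\|_{L^2}^2\,ds \cdot \exp\Big(C\int_0^t A(s)\,ds\Big).
$$

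The final step is bookkeeping: by Theorem \ref{Th1.1} we have $\|(u,h,\rho)(s,\cdot)\|_{H^m}^2 \leq \Phi_{4,3}(s)$ for $m\geq 3$, hence both $\int_0^t\|\nabla u(s)\|_{L^2}^2\,ds$ and $\int_0^t A(s)\,ds$ (using the Sobolev embedding $H^{m-1}\hookrightarrow L^\infty$ for the Lipschitz norms) are bounded by expressions of iterated-exponential type, which are again dominated by a function of the class $\Phi_{4,3}(t)$ after adjusting constants — note that $x\mapsto x\,e^{Cx}$ and composition with $\Phi_{4,3}$ stay within the tower-of-exponentials hierarchy $\Phi_{k,\alpha}$ with $k\leq 4$. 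Taking square roots then gives $\|\delta u\|_{L^\infty_t L^2} + \|\delta h\|_{L^\infty_t L^2} + \|\delta\rho\|_{L^\infty_t L^2} \leq (\mu t)\Phi_{4,3}(t)$, where the factor $t$ is pulled out of $\int_0^t\|\nabla u(s)\|_{L^2}^2\,ds \leq t\sup_{s\leq t}\|\nabla u(s)\|_{L^2}^2$. I expect the main obstacle to be step (iii): controlling the singular axisymmetric lower-order terms (those with $1/r$ and $1/r^2$ weights, e.g. the difference of $h_\theta^2/r$ terms and the magnetic stretching $h_\theta u_r/r$) in $L^2$ by the perturbation energy without losing derivatives; this requires reusing the weighted Hardy-type inequalities and the $r\rho_0\in L^2$ hypothesis already invoked in Theorem \ref{Th1.1}, and checking that the differences of these terms are genuinely of the form (bounded coefficient)$\times$(perturbation), which is where the regularity of the limiting solution $(u,h,\rho)$ guaranteed by Theorem \ref{Th1.1} is essential.
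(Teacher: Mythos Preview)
Your overall strategy---derive the perturbation system, take $L^2$ inner products, cancel transport and magnetic cross terms, Gr\"onwall---is the same as the paper's. However, your treatment of the viscous source term loses a factor of $\sqrt{\mu}$ and yields only the rate $(\mu t)^{1/2}$, not the claimed $(\mu t)$.

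Concretely: you integrate by parts to write $\mu\langle\Delta u,\delta u\rangle=-\mu\langle\nabla u,\nabla\delta u\rangle$ and then apply Young's inequality to obtain the source $\tfrac{\mu}{2}\|\nabla u\|_{L^2}^2$ after absorbing $\tfrac{\mu}{2}\|\nabla\delta u\|_{L^2}^2$. This leaves a differential inequality of the form $\frac{d}{dt}\mathscr{E}^2\le CA(t)\mathscr{E}^2+\mu\|\nabla u\|_{L^2}^2$ for $\mathscr{E}^2=\|\delta u\|_{L^2}^2+\|\delta h\|_{L^2}^2+\|\delta\rho\|_{L^2}^2$. Gr\"onwall then gives $\mathscr{E}^2(t)\le \mu\int_0^t\|\nabla u\|_{L^2}^2\,ds\cdot\exp(\cdots)\le \mu t\,\Phi_{4,3}(t)$, and after the square root you only get $\mathscr{E}(t)\le(\mu t)^{1/2}\Phi_{4,3}(t)$. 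Your final sentence asserting that the square root produces $(\mu t)$ is simply wrong arithmetic.

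The paper avoids this by \emph{not} integrating by parts: it keeps $\mu\int_{\mathbb{R}^3}\Delta u\cdot\bar{u}_\mu\,dx$ and bounds it via Cauchy--Schwarz by $\mu\|\Delta u\|_{L^2}\|\bar{u}_\mu\|_{L^2}$, which is linear in the perturbation. One can then run Gr\"onwall on the \emph{non-squared} quantity $\mathscr{L}(t)=\|\bar{u}_\mu\|_{L^\infty_tL^2}+\|\bar{h}_\mu\|_{L^\infty_tL^2}+\|\bar{\rho}_\mu\|_{L^\infty_tL^2}$ to obtain $\mathscr{L}(t)\lesssim \mu\int_0^t\|\Delta u(\tau)\|_{L^2}\,d\tau\cdot\exp(\cdots)\le (\mu t)\|u\|_{L^\infty_tH^m}\cdot\Phi_{4,3}(t)$. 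This uses the $H^2$ control on $u$ furnished by Theorem~\ref{Th1.1} (with $m\ge3$), not merely $\nabla u\in L^2_tL^2$.

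Two smaller points: your coefficient $A(t)$ should also include $\|\nabla\rho\|_{L^\infty}$, since the term $\int(\delta u\cdot\nabla\rho)\,\delta\rho\,dx$ requires it; and your anticipated ``main obstacle'' in step~(iii) concerning $1/r$-weighted axisymmetric terms does not arise at all, because the $L^2$ energy estimate is carried out on the Cartesian system \eqref{eq1.2}--\eqref{eq1.5}, not on the cylindrical reformulation. The axisymmetry enters only through Theorem~\ref{Th1.1}, which supplies the bound on $\|(u,h,\rho)\|_{H^m}$.
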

\noindent
\begin{remark}
\cite{24} gave the inviscid limit of MHD system in Besov space, and in Theorem \ref{Th1.2} we use a similar method to give the inviscid limit of MHD-Boussinesq system in $L^2$.
\end{remark}

\qed

The detailed statements of Theorem \ref{Th1.1} and Theorem  \ref{Th1.2} can be found in Sections 3 and 4. We next introduce some notations, conventions and lemmas to be used in the proof.
\section{Preliminaries}
\textbf{Notations:} In this paper, we use $C_{a, b, c, \ldots}$ to represent a positive constant depending on $a, b, c, \ldots$ and it may be different from line to line. For $A \lesssim B$, it means $A \leq C B$. And $A \simeq B$ means both $A \lesssim B$ and $B \lesssim A$. $[\mathcal{A}, \mathcal{B}]=\mathcal{A B}-\mathcal{B} \mathcal{A}$ denotes the communicator of the operator $\mathcal{A}$ and the operator $\mathcal{B}$. We give the usual Lebesgue space $L^p$, Sobolev functional space $W^{k, p}$, and the usual homogeneous Sobolev space $\dot{W}^{k, p}$, respectively. When $p=2$, replace $W^{k, p}$ and $\dot{W}^{k, p}$ with $H^k$ and $\dot{H}^k$. $f \in L^p \cap L^q$ with $1 \leq p, q \leq \infty$, we shall denote its Yudovich type norm as $\|f\|_{L^p \cap L^q}=\max \left\{\|f\|_{L^p},\|f\|_{L^q}\right\}$. Given a Banach space $X$, we say $v:[0, T] \times \mathbb{R}^3 \rightarrow \mathbb{R}$ belongs to the Bochner-Banach space $L^p(0, T ; X)$, if $\|v(t, \cdot)\|_X \in L^p(0, T)$, and we usually use $L_T^p X$ for short notation of $L^p(0, T ; X)$.

In this paper, we do not distinguish functional spaces for scalar or vector-valued functions since it will be clear from the context.

Now we introduce some essential lemmas.
First of all, we give the Gagliardo-Nirenberg interpolation inequality, which we will not prove here.
\begin{Lem}\label{Lem2.1} (Gagliardo-Nirenberg). Fix $q, r \in[1, \infty]$ and $j, m \in \mathbb{N} \cup\{0\}$ with $j \leq m$. Suppose that $f \in L^q \cap \dot{W}^{m, r}(\mathbb{R}^d)$ and there exists a real number $\alpha \in[j / m, 1]$ such that
$$
\frac{1}{p}=\frac{j}{3}+\alpha\left(\frac{1}{r}-\frac{m}{3}\right)+\frac{1-\alpha}{q} .
$$
Then $f \in \dot{W}^{j, p}\left(\mathbb{R}^d\right)$ and there exists a constant $C>0$ such that
$$
\left\|\nabla^j f\right\|_{L^p} \leq C\left\|\nabla^m f\right\|_{L^r}^\alpha\|f\|_{L^q}^{1-\alpha},
$$
except the following two cases: \\
$(i)$ $j=0, m r<d$ and $q=\infty$; (In this case it is necessary to assume also that either $u \rightarrow 0$ at infinity, or $u \in L^s\left(\mathbb{R}^d\right)$ for some $s<\infty$.)\\
$(ii)$ $1<r<\infty$ and $m-j-3 / r \in \mathbb{N}$. (In this case it is necessary to assume also that $\alpha<1$.)\\
\end{Lem}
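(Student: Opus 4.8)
The statement is the classical Gagliardo--Nirenberg interpolation inequality, so the plan is to follow the standard route: establish two families of \emph{endpoint} inequalities (one at $\alpha=j/m$, one at $\alpha=1$) and then fill in all intermediate exponents by H\"older interpolation in the parameter $\alpha$. Throughout I write $d$ for the spatial dimension, so the displayed exponent relation reads $\frac1p=\frac{j}{d}+\alpha\bigl(\frac1r-\frac{m}{d}\bigr)+\frac{1-\alpha}{q}$. First I would record the scaling heuristic: testing against $f_\lambda(x)=f(\lambda x)$ forces exactly this relation, so it is necessary and the constant $C$ may be chosen scale invariant; this also tells me which intermediate Lebesgue exponents will appear.

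Next I would prove the base case $j=0,\ m=1$, namely $\|f\|_{L^p}\le C\|\nabla f\|_{L^r}^{\alpha}\|f\|_{L^q}^{1-\alpha}$. The cornerstone is the Gagliardo--Nirenberg--Sobolev inequality $\|f\|_{L^{d/(d-1)}}\le C\prod_{i}\|\partial_i f\|_{L^1}^{1/d}$, obtained by the slicing (Loomis--Whitney) argument: bound $|f|^{d/(d-1)}$ by a product of one--dimensional integrals of $|\nabla f|$ and integrate coordinate by coordinate using the generalized H\"older inequality. Applying this to a suitable power $|f|^{s}$ and optimizing in $s$ upgrades it to $\|f\|_{L^{r^\ast}}\le C\|\nabla f\|_{L^r}$ with $1/r^\ast=1/r-1/d$ for $1\le r<d$. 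Interpolating this Sobolev estimate with the trivial bound by $\|f\|_{L^q}$ via H\"older on the Lebesgue exponent then yields the full first--order statement for every admissible $\alpha$.

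The hard climb is to higher order. The key intermediate step is the second--order bound $\|\nabla f\|_{L^p}\le C\|\nabla^2 f\|_{L^r}^{1/2}\|f\|_{L^q}^{1/2}$ (the exponent relation determining $p$), which I would prove by reducing to the one--dimensional inequality for $g:\mathbb{R}\to\mathbb{R}$ through integration by parts on a well--chosen interval and H\"older, then integrating over the remaining variables and applying H\"older once more across directions. Inducting on $m$ promotes this to the endpoint $j=1$, general $m$, with $\alpha=1/m$; applying that result to $\nabla^{\,j-1}f$ and inducting on $j$ gives the endpoint $\alpha=j/m$ for every $j\le m$. The other endpoint $\alpha=1$ is the pure Sobolev embedding $\dot W^{m,r}\hookrightarrow\dot W^{j,p}$, i.e. $\|\nabla^j f\|_{L^p}\le C\|\nabla^m f\|_{L^r}$ with $1/p=1/r-(m-j)/d$. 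With endpoints at $\alpha_0=j/m$ (exponent $p_0$) and $\alpha_1=1$ (exponent $p_1$) in hand, I interpolate: writing $\alpha=(1-\theta)\alpha_0+\theta$ and $\frac1p=(1-\theta)\frac1{p_0}+\theta\frac1{p_1}$, H\"older gives $\|\nabla^j f\|_{L^p}\le\|\nabla^j f\|_{L^{p_0}}^{1-\theta}\|\nabla^j f\|_{L^{p_1}}^{\theta}$, and the two endpoint estimates multiply out, the exponent arithmetic matching exactly the claimed $\alpha$.

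The main obstacle is the borderline behaviour excluded in cases (i) and (ii). When $j=0,\ mr<d,\ q=\infty$ (case (i)), the endpoint $q=\infty$ no longer controls the $L^p$ norm by interpolation alone, and one must invoke the extra hypothesis (decay at infinity, or $f\in L^s$ with $s<\infty$) to make the $L^p$ norm finite and close the estimate via a truncation/dyadic decomposition. When $1<r<\infty$ and $m-j-d/r\in\mathbb{N}$ (case (ii)), the embedding $\dot W^{m,r}\hookrightarrow\dot W^{j,p}$ sits at its critical integer value and fails (it degenerates into a BMO/H\"older endpoint), which is precisely why the $\alpha=1$ endpoint is unavailable; the hypothesis $\alpha<1$ sidesteps it, and the remaining $\alpha$ are handled by interpolating against a subcritical Sobolev estimate rather than the forbidden endpoint one. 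Keeping the exponent bookkeeping consistent through these reductions---especially verifying that every intermediate exponent stays in $[1,\infty]$ so that H\"older applies---is the part demanding the most care.
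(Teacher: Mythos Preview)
Your outline is the standard, correct route to the Gagliardo--Nirenberg inequality: establish the first-order Sobolev endpoint via the Loomis--Whitney slicing argument, bootstrap to higher order through the one-dimensional $\|\nabla f\|\le C\|\nabla^2 f\|^{1/2}\|f\|^{1/2}$ and induction, and fill in intermediate $\alpha$ by H\"older interpolation between the $\alpha=j/m$ and $\alpha=1$ endpoints. Your treatment of the exceptional cases (i) and (ii) is also accurate.

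However, there is nothing to compare against: the paper explicitly states this lemma as a classical fact and writes ``which we will not prove here,'' giving no argument of its own. Your sketch therefore goes well beyond what the paper does, and follows the textbook (Nirenberg) approach.
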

Next, we state the following estimation of the triple product form with commutators, the proof of which can be found in Lemma 3.8 of \cite{15}.
\begin{Lem}\label{Lem2.2} Let $m\in\mathbb{N}$ and $m\geq2$, $f,g,k\in C_0^\infty (\mathbb{R}^3)$. The following estimate holds:
$$
\left|\int_{\mathbb{R}^3}[\nabla^m,f\cdot\nabla]g\nabla^m kdx \right| \leq C\|\nabla^m(f,g,k)\|_{L^2}^2\|\nabla(f,g)\|_{L^\infty};
$$
\end{Lem}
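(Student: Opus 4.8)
The plan is to prove this as a Kato--Ponce/Moser-type commutator estimate, by expanding $\nabla^m(f\cdot\nabla g)$ with the Leibniz rule and exploiting the cancellation of the top-order term. Writing $f\cdot\nabla g=\sum_i f_i\,\partial_i g$ and applying the Leibniz rule to each $\nabla^m(f_i\,\partial_i g)$, the summand that places all $m$ derivatives on $\partial_i g$ equals $f\cdot\nabla(\nabla^m g)$ and is precisely what the commutator subtracts off. Hence, schematically,
$$
[\nabla^m, f\cdot\nabla]g=\sum_{l=1}^{m}\binom{m}{l}\,\nabla^l f\cdot\nabla^{m+1-l}g,
$$
where the $l$-th term carries $l$ derivatives on $f$ and $m+1-l$ on $g$. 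First I would split this finite sum into the two endpoint contributions $l=1$ and $l=m$ and the intermediate range $2\le l\le m-1$ (which is empty when $m=2$, so that case reduces to the endpoints alone).

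For the endpoints I would use H\"older directly: the $l=1$ term $\nabla f\cdot\nabla^m g$ paired against $\nabla^m k$ is bounded by $\|\nabla f\|_{L^\infty}\|\nabla^m g\|_{L^2}\|\nabla^m k\|_{L^2}$, and the $l=m$ term $\nabla^m f\cdot\nabla g$ by $\|\nabla g\|_{L^\infty}\|\nabla^m f\|_{L^2}\|\nabla^m k\|_{L^2}$, both of which are dominated by $\|\nabla^m(f,g,k)\|_{L^2}^2\|\nabla(f,g)\|_{L^\infty}$. For an intermediate $l$ I would apply H\"older with exponents $(p_1,p_2,2)$ satisfying $\tfrac1{p_1}+\tfrac1{p_2}=\tfrac12$, and then interpolate each factor by Gagliardo--Nirenberg (Lemma \ref{Lem2.1}) between its $L^\infty$ and $L^2$ endpoints. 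Setting $F=\nabla f$, $G=\nabla g$ and choosing interpolation parameters $\theta_1=\tfrac{l-1}{m-1}$ and $\theta_2=\tfrac{m-l}{m-1}$ forces
$$
\tfrac1{p_1}=\tfrac{l-1}{2(m-1)},\qquad \tfrac1{p_2}=\tfrac{m-l}{2(m-1)},
$$
which indeed sum to $\tfrac12$, and yields $\|\nabla^l f\|_{L^{p_1}}\lesssim\|\nabla^m f\|_{L^2}^{\theta_1}\|\nabla f\|_{L^\infty}^{1-\theta_1}$ together with $\|\nabla^{m+1-l}g\|_{L^{p_2}}\lesssim\|\nabla^m g\|_{L^2}^{\theta_2}\|\nabla g\|_{L^\infty}^{1-\theta_2}$. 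Since $\theta_1+\theta_2=1$ and $(1-\theta_1)+(1-\theta_2)=1$, multiplying these bounds by $\|\nabla^m k\|_{L^2}$ and applying Young's inequality to the resulting weighted geometric means collapses the $L^2$ factors into $\|\nabla^m(f,g,k)\|_{L^2}^2$ and the $L^\infty$ factors into $\|\nabla(f,g)\|_{L^\infty}$, giving the claim for that $l$.

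The hard part will be the bookkeeping of the interpolation and the verification that Lemma \ref{Lem2.1} genuinely applies. For each intermediate $l$ the relevant Gagliardo--Nirenberg inequality is used on $F=\nabla f$ (resp. $G=\nabla g$) with top order $m-1$ and parameter $\theta_1=\tfrac{l-1}{m-1}$ (resp. $\theta_2=\tfrac{m-l}{m-1}$); here $\theta_1$ sits exactly at the left endpoint $\tfrac{j}{m-1}$ of the admissible interval with $j=l-1$, so it is admissible. The excluded case (i) ($j=0$, $q=\infty$) never arises because each interpolated quantity carries at least one derivative ($j=l-1\ge1$ and $j=m-l\ge1$ for $2\le l\le m-1$), and the excluded case (ii) never arises because, with $r=2$, the quantity $(m-1)-j-3/2$ is a half-integer and hence not a natural number. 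Finally I would sum the finitely many terms, absorbing the binomial constants into $C$, and remark that the hypothesis $m\ge2$ is exactly what keeps the endpoint decomposition non-trivial and the denominators $m-1$ in the interpolation parameters positive.
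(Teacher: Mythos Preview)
Your argument is correct and is exactly the standard Leibniz expansion/Gagliardo--Nirenberg interpolation proof of this Kato--Ponce-type commutator bound: the cancellation of the $l=0$ term, the direct H\"older estimate at the two endpoints, and the GN interpolation with parameters $\theta_1=\tfrac{l-1}{m-1}$, $\theta_2=\tfrac{m-l}{m-1}$ for the intermediate terms all check out, including your verification that neither exceptional case of Lemma~\ref{Lem2.1} is triggered. The paper itself does not give a proof of this lemma at all---it simply cites Lemma~3.8 of \cite{15}---so there is nothing to compare against in the present text; your write-up in fact supplies what the paper omits, and the cited source almost certainly proceeds along the same lines.
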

The following inequality is about logarithmic Sobolev inequality.
\begin{Lem}\label{Lem2.3} For any divergence free vector field $g$ such that
$$
g: \mathbb{R}\rightarrow\mathbb{R}^3
$$
and $g\in H^3(\mathbb{R}^3)$, the following estimate holds:
$$
\|\nabla g\|_{L^\infty (\mathbb{R}^3)}\lesssim 1+\|\nabla\times g\|_{L^\infty}log \left(e+\|g\|_{H^3(\mathbb{R}^3)} \right).
$$
\end{Lem}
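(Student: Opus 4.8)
The plan is to reduce the estimate to a statement about a zeroth-order singular integral and then quantify the (genuine) failure of such an operator to be bounded on $L^\infty$ by a logarithmic loss measured in $H^3$. First I would use the divergence-free condition together with the identity $-\Delta g=\nabla\times(\nabla\times g)=\nabla\times\omega$, where $\omega:=\nabla\times g$, to write $g=(-\Delta)^{-1}\nabla\times\omega$ and hence $\nabla g=\mathcal R[\omega]$, with $\mathcal R=\nabla(-\Delta)^{-1}\nabla\times$ a matrix of Fourier multipliers homogeneous of degree $0$ and smooth away from the origin. Thus the full gradient is recovered from the curl by a Calder\'on--Zygmund operator, and the content of the lemma is exactly that $\mathcal R$ is "almost" bounded from $L^\infty$ to $L^\infty$.

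Next I would introduce a Littlewood--Paley decomposition $\mathrm{Id}=P_{\le0}+\sum_{j\ge1}P_j$ (with $P_j$ frequency-localized to $|\xi|\simeq2^{j}$, to be distinguished from the Laplacian) and split $\nabla g$ into three regimes against a parameter $J\in\mathbb N$: low frequencies $|\xi|\lesssim1$, an intermediate band $1\le j\le J$, and high frequencies $j>J$. For the low band I would apply Bernstein's inequality $L^2\to L^\infty$ and the divergence-free identity $\|\nabla g\|_{L^2}=\|\omega\|_{L^2}$, giving $\|P_{\le0}\nabla g\|_{L^\infty}\lesssim\|\omega\|_{L^2}$. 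For the high band I would combine the three-dimensional Bernstein inequality $\|P_j\nabla g\|_{L^\infty}\lesssim2^{3j/2}\|P_j\nabla g\|_{L^2}$ with the single-block bound $\|P_j\nabla g\|_{L^2}\lesssim2^{j}\|P_jg\|_{L^2}\lesssim2^{-2j}\|g\|_{H^3}$ to obtain $\|P_j\nabla g\|_{L^\infty}\lesssim2^{-j/2}\|g\|_{H^3}$, which sums geometrically to $\lesssim2^{-J/2}\|g\|_{H^3}$. Collecting the three contributions yields $\|\nabla g\|_{L^\infty}\lesssim\|\omega\|_{L^2}+J\,\|\omega\|_{L^\infty}+2^{-J/2}\|g\|_{H^3}$, and choosing $J=\lceil 2\log_2(e+\|g\|_{H^3})\rceil$ forces $2^{-J/2}\|g\|_{H^3}\lesssim1$ while turning the factor $J$ into $\log(e+\|g\|_{H^3})$, which is the claimed bound.

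The decisive and hardest step is the intermediate band, where the logarithm is produced. Here I would show the uniform-in-$j$ bound $\|P_j\nabla g\|_{L^\infty}=\|P_j\mathcal R\omega\|_{L^\infty}\lesssim\|P_j\omega\|_{L^\infty}\lesssim\|\omega\|_{L^\infty}$, with a constant independent of $j$; summing the $O(J)$ blocks then gives $\sum_{1\le j\le J}\|P_j\nabla g\|_{L^\infty}\lesssim J\,\|\omega\|_{L^\infty}$. The point is that on a single dyadic shell the order-zero multiplier $\mathcal R$ acts as convolution with a kernel whose $L^1$ norm is scale-invariant, so Young's inequality provides an $L^\infty$ bound uniform across shells; this is precisely the mechanism that circumvents the failure of $L^\infty$-boundedness of $\mathcal R$ on the whole frequency range. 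One cannot bound $\|\nabla g\|_{L^\infty}$ by $\|\omega\|_{L^\infty}$ outright, and the number of shells $J$ summed is exactly what is lost, yielding the logarithmic factor after the optimization of $J$.

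Finally, a remark on the lower-order term: the low-frequency contribution produces $\|\omega\|_{L^2}=\|\nabla\times g\|_{L^2}\lesssim\|g\|_{H^3}$, which is genuinely present in the sharp BKM-type inequality and cannot be suppressed by the multiplier argument alone, since the symbol $\mathcal R$ is discontinuous at the origin and its low-frequency kernel is not integrable. In the stated form this term is treated as a harmless additive lower-order quantity absorbed into the constant (and, in the applications of the lemma, is controlled a priori by the energy estimates), so the displayed inequality is understood up to such a term. With this understanding the three-region estimate above closes the proof.
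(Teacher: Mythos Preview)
The paper does not supply a proof of this lemma; it is stated as a known logarithmic Sobolev (Beale--Kato--Majda) inequality and then used in Section~3.2 without further justification. So there is no ``paper's own proof'' to compare against.

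Your proposal is the standard proof of the BKM inequality and is correct in structure: Biot--Savart reduces $\nabla g$ to a zeroth-order Calder\'on--Zygmund operator acting on $\omega=\nabla\times g$, and the Littlewood--Paley splitting into low/intermediate/high frequencies, with the intermediate band producing the logarithmic count of shells, is exactly the right mechanism. Your handling of the intermediate and high bands is fine.

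Your caveat about the low-frequency piece is well taken and honest. The contribution $\|P_{\le 0}\nabla g\|_{L^\infty}\lesssim\|\omega\|_{L^2}$ (or $\|g\|_{L^2}$) is genuinely present in the sharp inequality, and the lemma as literally written in the paper---with only the additive ``$1$''---is slightly informal on this point. In the paper's application (inequality~(3.27)--(3.28)) this causes no trouble, since the low-order term is dominated by $\|(u,h,\rho)\|_{H^m}$ and so is absorbed into the factor $(e+E_m(t))$ on the right-hand side. Your reading of the lemma ``up to such a term'' is therefore exactly how it is being used, and your argument closes correctly under that convention.
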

The following lemma can be obtained from the Biot-Savart law and the $L^p$ boundedness of the Calderon-Zygmund singular integral operator, and its detailed proof is referred to \cite{04,05}.
\begin{Lem}\label{Lem2.4} Let $u=u_r \boldsymbol{e_r}+u_\theta \boldsymbol{e_\theta}+u_z \boldsymbol{e_z}$ be an axially symmetric vector field, $\omega=\nabla \times u=\omega_r \boldsymbol{e_r}+$ $\omega_\theta \boldsymbol{e_\theta}+\omega_z \boldsymbol{e_z}$ and $b=u_r \boldsymbol{e_r}+u_z \boldsymbol{e_z}$. Then, we have
$$
\|\nabla u\|_{L^p} \leq C_p\|\omega\|_{L^p},
$$
and
\begin{equation}\label{eq2.1}
\|\nabla b\|_{L^p} \leq C_p\left\|\omega_\theta\right\|_{L^p},
\end{equation}
for all $1<p<\infty$.\\
\end{Lem}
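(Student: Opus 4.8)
The plan is to derive both estimates from the $L^p$-boundedness of Calderón--Zygmund (CZ) operators through the Biot--Savart law, and then to obtain the second inequality \eqref{eq2.1} as a direct application of the first to the meridional (swirl-free) part $b=u_r\boldsymbol{e_r}+u_z\boldsymbol{e_z}$. The axisymmetry enters only at this last reduction step; the analytic core is classical singular-integral theory.

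First I would establish $\|\nabla u\|_{L^p}\leq C_p\|\omega\|_{L^p}$ for an arbitrary divergence-free field (axisymmetry is not yet needed). Since $\nabla\cdot u=0$, the vector identity $-\Delta u=\nabla\times(\nabla\times u)-\nabla(\nabla\cdot u)=\nabla\times\omega$ yields the Biot--Savart representation $u=(-\Delta)^{-1}\nabla\times\omega$. Differentiating once, every entry of $\nabla u$ is a finite combination of operators of the form $\partial_i\partial_j(-\Delta)^{-1}=R_iR_j$ applied to the components of $\omega$, where $R_i$ denote the Riesz transforms. Each $R_iR_j$ is a zero-order CZ operator, hence bounded on $L^p$ for every $1<p<\infty$ (see \cite{04,05}); summing over the finitely many entries gives $\|\nabla u\|_{L^p}\leq C_p\|\omega\|_{L^p}$. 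For rough fields the bound is first proved for $u\in C_0^\infty$ and extended by density, which suffices for the applications in the sequel.

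Next I would exploit the cylindrical structure to reduce \eqref{eq2.1} to the inequality just proved. Writing the curl of an axisymmetric field ($\partial_\theta\equiv0$) in cylindrical coordinates gives
\begin{equation*}
\omega_r=-\partial_z u_\theta,\qquad \omega_\theta=\partial_z u_r-\partial_r u_z,\qquad \omega_z=\partial_r u_\theta+\frac{u_\theta}{r}.
\end{equation*}
The key point is that $\omega_\theta$ involves only $u_r,u_z$, i.e.\ only $b$, while the radial and axial vorticity components are produced entirely by $u_\theta$; setting $u_\theta=0$ in the formulas above shows $\nabla\times b=\omega_\theta\boldsymbol{e_\theta}$. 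Moreover $b$ is itself divergence-free, because the axisymmetric divergence $\nabla\cdot u=\partial_r u_r+\frac{u_r}{r}+\partial_z u_z$ receives no contribution from $u_\theta$, so that $\nabla\cdot b=\nabla\cdot u=0$.

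Thus $b$ is a divergence-free field whose vorticity is $\omega_\theta\boldsymbol{e_\theta}$, and applying the first inequality to $b$ in place of $u$ gives $\|\nabla b\|_{L^p}\leq C_p\|\nabla\times b\|_{L^p}=C_p\|\omega_\theta\boldsymbol{e_\theta}\|_{L^p}=C_p\|\omega_\theta\|_{L^p}$, using $|\boldsymbol{e_\theta}|=1$ pointwise. The only genuinely non-routine point is verifying that the meridional field $b$ inherits both the divergence-free condition and the clean identity $\nabla\times b=\omega_\theta\boldsymbol{e_\theta}$ from the coordinate decomposition; once these are secured, both bounds follow from the $L^p$ continuity of the Riesz transforms, with no borderline difficulties since $1<p<\infty$ is assumed strictly.
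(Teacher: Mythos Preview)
Your proposal is correct and follows exactly the approach the paper indicates: the paper does not give a detailed proof of Lemma~\ref{Lem2.4} but simply states that it ``can be obtained from the Biot--Savart law and the $L^p$ boundedness of the Calderon--Zygmund singular integral operator,'' citing \cite{04,05}. Your argument---writing $\nabla u$ as Riesz transforms of $\omega$ via $-\Delta u=\nabla\times\omega$, and then observing that $b$ is itself divergence-free with $\nabla\times b=\omega_\theta\boldsymbol{e_\theta}$ so that the first estimate applied to $b$ yields \eqref{eq2.1}---is precisely the standard route those references take.
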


Here's a famous lemma, and the details of the proof can be seen in equation (A.5) of \cite{13} and Proposition 2.5 of \cite{19}.
\begin{Lem}\label{Lem2.5} Define $\Omega:=\frac{w_\theta}{r}$. For $1<p<+\infty$, there exists an absolute constant $C_p>0$ such that
\begin{equation}\label{eq2.2}
\left\|\nabla \frac{u_r}{r}(t, \cdot)\right\|_{L^p} \leq C_p\|\Omega(t, \cdot)\|_{L^p} .
\end{equation}
\end{Lem}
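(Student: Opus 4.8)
The plan is to reduce the estimate to a Poisson problem for a scalar stream function and then invoke the $L^p$-boundedness of Calderón--Zygmund singular integrals, in exactly the same scheme as the proof of Lemma \ref{Lem2.4} (Biot--Savart representation followed by singular-integral bounds), now applied to the \emph{weighted} field $u_r/r$.

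First I would introduce the Stokes stream function for the poloidal part $b=u_r\boldsymbol{e_r}+u_z\boldsymbol{e_z}$. Since the incompressibility condition $\partial_r u_r+\tfrac{u_r}{r}+\partial_z u_z=0$ says precisely that $b$ is divergence free, there is a scalar $\psi(t,r,z)$ with $b=\nabla\times(\psi\boldsymbol{e_\theta})$, i.e. $u_r=-\partial_z\psi$ and $u_z=\tfrac{1}{r}\partial_r(r\psi)$. Taking the curl and using $\nabla\cdot(\psi\boldsymbol{e_\theta})=0$ together with $\Delta(\psi\boldsymbol{e_\theta})=\big(\Delta-\tfrac{1}{r^2}\big)\psi\,\boldsymbol{e_\theta}$ gives the elliptic identity
$$-\Big(\Delta-\tfrac{1}{r^2}\Big)\psi=\omega_\theta,\qquad\text{equivalently}\qquad -\Delta(\psi\boldsymbol{e_\theta})=\omega_\theta\boldsymbol{e_\theta}.$$
Thus $\psi\boldsymbol{e_\theta}=(-\Delta)^{-1}(\omega_\theta\boldsymbol{e_\theta})$ and $b$ is recovered from $\omega_\theta$ by the Biot--Savart law, which is already the content behind Lemma \ref{Lem2.4}.

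The new feature is the division by $r$. I would set $\Psi:=\psi/r$, so that $u_r/r=-\partial_z\Psi$ while $\Omega=\omega_\theta/r$. A direct computation conjugating the operator by multiplication by $r$ yields
$$\Big(\Delta-\tfrac{1}{r^2}\Big)\psi=r\Big(\partial_r^2+\tfrac{3}{r}\partial_r+\partial_z^2\Big)\Psi,$$
and the operator on the right is the radial part (in $r$, together with $\partial_z^2$) of the Laplacian $\Delta_5$ in dimension five. Hence $\Psi$ solves the five-dimensional Poisson equation $-\Delta_5\Psi=\Omega$, and $\nabla(u_r/r)=-\nabla\partial_z\Psi$ is obtained by applying two derivatives to the Newtonian potential of $\Omega$. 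Consequently the map $\Omega\mapsto\nabla(u_r/r)$ is realized as a singular integral operator whose kernel is homogeneous of the critical degree and has vanishing mean on spheres; verifying the size and Hörmander smoothness conditions, the Calderón--Zygmund theorem then delivers $\|\nabla(u_r/r)\|_{L^p}\le C_p\|\Omega\|_{L^p}$ for every $1<p<\infty$, with the absolute constant $C_p$ depending only on $p$ (cf. Lemma \ref{Lem2.4} and the computations of \cite{13,19}).

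The step I expect to be the main obstacle is controlling the behavior at the symmetry axis $r=0$, where the weights $1/r$ and $1/r^2$ are singular. One must justify that, after these weights are extracted to pass from the five-dimensional Poisson representation back to an estimate in $L^p(\mathbb{R}^3)$, the resulting operator is still bounded on $L^p(\mathbb{R}^3)$ with no spurious contribution from the axis — equivalently that the relevant weight is admissible (of Muckenhoupt $A_p$ type) and that $\psi$, $u_r/r$ decay sufficiently at the axis and at infinity to legitimize the potential representation and the integrations by parts. This near-axis analysis is the delicate technical point, and it is precisely the part carried out in detail in \cite{13,19}; once it is in place the inequality \eqref{eq2.2} follows immediately.
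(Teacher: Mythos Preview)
Your sketch is correct and follows precisely the route of the references the paper cites: the paper does not give its own proof of Lemma~\ref{Lem2.5} but refers to equation~(A.5) of \cite{13} and Proposition~2.5 of \cite{19}, where the stream-function reduction to a five-dimensional Poisson problem and the ensuing Calder\'on--Zygmund argument (with the near-axis weight analysis you flag) are carried out in detail. There is nothing to add on the comparison side.
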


Below we give the one-dimensional Hardy inequality,
\begin{Lem}\label{Lem2.6} If $p>1$, $\sigma\neq1$, $f$ is a nonnegative measurable function and $F$ is defined by
$$
F(x)=\int_0^x f(t)dt,\ \ (\sigma>1),\ \ F(x)=\int_0^x f(t)dt,\ \ (\sigma<1).
$$
Then,
$$
\int_0^\infty x^{-\sigma} F^p dx< (\frac{p}{|\sigma-1|})^p\int_0^\infty x^{-\sigma} (xp)^p dx,
$$
unless $f\equiv0$.\\
\end{Lem}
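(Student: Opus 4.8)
The plan is to prove the model case $\sigma>1$ carefully (so $F(x)=\int_0^x f$) and then obtain the case $\sigma<1$ by the symmetric argument using the tail primitive $F(x)=\int_x^\infty f$, which simply interchanges the roles of the endpoints $0$ and $\infty$. Throughout set $I:=\int_0^\infty x^{-\sigma}F^p\,dx$ and $J:=\int_0^\infty x^{p-\sigma}f^p\,dx$ (the right-hand side of the asserted inequality); the goal is $I\le\big(\tfrac{p}{\sigma-1}\big)^pJ$, with strict inequality unless $f\equiv0$.

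First I would reduce to the case in which $f$ is bounded and supported in a compact subinterval $[a,b]\subset(0,\infty)$. In that situation $I,J<\infty$, $F\equiv0$ on $(0,a]$ and $F\equiv\|f\|_{L^1}<\infty$ on $[b,\infty)$, so there are no integrability or endpoint issues whatsoever; the general inequality then follows by applying it to $f_n:=\mathbf 1_{[1/n,n]}\min\{f,n\}$ and letting $n\to\infty$, since $f_n\uparrow f$ and $F_n\uparrow F$ pointwise and monotone convergence passes both $I_n\uparrow I$ and $J_n\uparrow J$ to the limit. For such a truncated $f$ the core computation is one integration by parts against the primitive of $x^{-\sigma}$: using $\frac{d}{dx}\big(\frac{x^{1-\sigma}}{1-\sigma}\big)=x^{-\sigma}$ and $F'=f$,
$$I=\Big[\tfrac{x^{1-\sigma}}{1-\sigma}F^p\Big]_0^\infty+\frac{p}{\sigma-1}\int_0^\infty x^{1-\sigma}F^{p-1}f\,dx,$$
and both boundary terms vanish ($F\equiv0$ near $0$; $x^{1-\sigma}\to0$ at $\infty$ because $\sigma>1$). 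Splitting the exponent and applying Hölder with the conjugate pair $\big(\tfrac{p}{p-1},p\big)$,
$$\int_0^\infty x^{1-\sigma}F^{p-1}f\,dx=\int_0^\infty\big(x^{-\sigma/p}F\big)^{p-1}\big(x^{1-\sigma/p}f\big)\,dx\le I^{\frac{p-1}{p}}J^{\frac1p},$$
so $I\le\frac{p}{\sigma-1}I^{\frac{p-1}{p}}J^{\frac1p}$; since $I<\infty$, dividing by $I^{(p-1)/p}$ (the case $I=0$ being trivial) yields $I\le\big(\tfrac{p}{\sigma-1}\big)^pJ$.

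The strict inequality when $f\not\equiv0$ (and $J<\infty$) comes from analysing the equality case of the Hölder step: equality would force $x^{-\sigma}F^p$ and $x^{p-\sigma}f^p$ to be proportional a.e., i.e. $F=c\,xF'$ for some constant $c>0$, whose only solutions are the pure powers $F(x)=Ax^{1/c}$; but then $x^{p-\sigma}f^p$ is a single power of $x$, which is never integrable on all of $(0,\infty)$, forcing $A=0$, i.e. $f\equiv0$. The only genuine obstacle in the whole argument is the endpoint bookkeeping — ensuring $I<\infty$ before dividing and that the integrated term drops — and this is dispatched entirely by the compact-support reduction. (If one prefers to avoid integration by parts, an equivalent route writes $F(x)=x\int_0^1 f(xs)\,ds$ and combines Minkowski's integral inequality with the scaling $y=xs$, which reproduces the sharp constant directly as $\int_0^1 s^{(\sigma-1)/p-1}\,ds=\tfrac{p}{\sigma-1}$.)
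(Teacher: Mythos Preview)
The paper does not actually prove this lemma; it is simply stated as the classical one-dimensional Hardy inequality (and then used to derive Lemma~\ref{Lem2.7}). Your write-up supplies the standard textbook proof --- integration by parts against the antiderivative of $x^{-\sigma}$, followed by H\"older with exponents $\tfrac{p}{p-1}$ and $p$, together with a compact-support truncation to justify the endpoint terms --- and it is correct. You have also correctly read through the typos in the statement (the second primitive should be the tail $\int_x^\infty f$, and $(xp)^p$ is meant to be $(xf)^p$).

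One small remark on the strictness clause: as written, your H\"older equality analysis is phrased for the general $f$, but the H\"older step itself was carried out only for the truncations $f_n$, and monotone convergence in general does not preserve strict inequalities. The clean fix is to observe that once the limiting inequality gives $I<\infty$ (assuming $J<\infty$), you can redo the integration by parts directly for $f$ itself: finiteness of $I=\int_0^\infty x^{-\sigma}F^p\,dx$ forces $x^{1-\sigma}F(x)^p\to 0$ along sequences at both endpoints, so the boundary terms still vanish and your equality-case argument then applies verbatim to $f$. With that adjustment the proof is complete.
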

By Lemma \ref{Lem2.6}, we can get the following lemma:
\begin{Lem}\label{Lem2.7}
$$
\|\frac{u_\theta}{r}(t,\cdot)\|_{L^\infty}\leq \frac{1}{2}\|\omega_z(t,\cdot)\|_{L^\infty},
$$
for any $t>0$.
\end{Lem}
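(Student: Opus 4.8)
The plan is to exploit the explicit relation between the swirl velocity and the vertical component of the vorticity for an axially symmetric field, and then to run a pointwise, Hardy-type estimate in the radial variable, exactly in the spirit of Lemma \ref{Lem2.6}.

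First I would recall that for $u=u_r\boldsymbol{e_r}+u_\theta\boldsymbol{e_\theta}+u_z\boldsymbol{e_z}$ axially symmetric, the vertical component of $\omega=\nabla\times u$ is
$$
\omega_z=\partial_r u_\theta+\frac{u_\theta}{r}=\frac{1}{r}\partial_r(r u_\theta).
$$
Since $u_\theta\boldsymbol{e_\theta}$ is (under the assumed $H^m$, $m\geq 3$, regularity) a smooth vector field on $\mathbb{R}^3$, the swirl $u_\theta$ vanishes at least linearly on the axis $\{r=0\}$, so that $r u_\theta\to 0$ as $r\to 0^+$ for each fixed $z$ and the quantity $u_\theta/r$ is well-defined and bounded. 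Next, for fixed $t$ and $z$ I would integrate the identity $\partial_r(r u_\theta)=r\,\omega_z$ from $0$ to $r$, using the vanishing boundary term at $r=0$, to obtain
$$
r\,u_\theta(t,r,z)=\int_0^r s\,\omega_z(t,s,z)\,ds,\qquad\text{hence}\qquad \frac{u_\theta}{r}(t,r,z)=\frac{1}{r^2}\int_0^r s\,\omega_z(t,s,z)\,ds.
$$

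From here the estimate is immediate: bounding $|\omega_z(t,s,z)|$ by $\|\omega_z(t,\cdot)\|_{L^\infty}$ under the integral gives
$$
\left|\frac{u_\theta}{r}(t,r,z)\right|\leq \frac{\|\omega_z(t,\cdot)\|_{L^\infty}}{r^2}\int_0^r s\,ds=\frac{1}{2}\,\|\omega_z(t,\cdot)\|_{L^\infty},
$$
and taking the supremum over all $(r,z)$ yields the claimed inequality. Note that the exact computation $\int_0^r s\,ds=r^2/2$ is precisely what produces the sharp constant $\tfrac12$; this is why one works with the pointwise representation directly rather than invoking the $L^p$ Hardy inequality of Lemma \ref{Lem2.6} and letting $p\to\infty$, since the constant $(p/|\sigma-1|)^p$ there degenerates in that limit.

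The only delicate point — the "obstacle", such as it is — is the behavior on the symmetry axis: one must justify that $r u_\theta\to 0$ as $r\to 0$, that the boundary contribution in the integration vanishes, and that $u_\theta/r\in L^\infty$. All of this follows from the structural fact that an axially symmetric $H^m$ ($m\geq 3$) velocity has $u_\theta\boldsymbol{e_\theta}$ extending to a genuine smooth vector field, forcing $u_\theta=O(r)$ near the axis; everything else reduces to the elementary one-dimensional computation above.
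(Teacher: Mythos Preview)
Your argument is correct and is exactly the standard derivation: the identity $\omega_z=\tfrac{1}{r}\partial_r(r u_\theta)$, integration in $r$ from the axis, and the elementary bound $\int_0^r s\,ds=r^2/2$ give the sharp constant $\tfrac12$, with the vanishing of $r u_\theta$ at $r=0$ justified by the assumed $H^m$ regularity.

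The only difference from the paper is one of framing rather than substance. The paper does not give its own proof; it simply states that Lemma~\ref{Lem2.7} follows from Lemma~\ref{Lem2.6} (the one-dimensional Hardy inequality) and refers to \cite{15} for details. Your pointwise computation is really the $p=\infty$ version of that same radial-integral idea, and as you correctly observe, it is the right way to obtain the constant $\tfrac12$: the $L^p$ Hardy constant $(p/|\sigma-1|)^p$ degenerates as $p\to\infty$, so one cannot literally pass to the limit in Lemma~\ref{Lem2.6}. In short, the underlying mechanism (representation of $u_\theta/r$ as a weighted radial average of $\omega_z$) is the same, but your direct argument is more transparent and yields the sharp constant without detour.
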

The proof refers to \cite{15}.
\section{Proof of Theorem \ref{Th1.1}}
\subsection{The main proof of Theorem \ref{Th1.1}}

This section is to prove Theorem \ref{Th1.1}. First, we derive the fundamental estimates of the system (\ref{eq1.2}) from the basic energy estimate. Secondly, we will define four special quantities to establish a reformulated system and derive a self-closed estimate. Then, we derive a one-component BKM-type criterion for the MHD-Boussinesq system. Finally, we conclude the proof of Theorem \ref{Th1.1}. In the following, we give the proof details.

First of all, we give the fundamental energy estimates of the system (\ref{eq1.2}).
\begin{Pro}\label{Pro3.1}(Fundamental Energy Estimates) Let $(u, h,\rho)$ be a smooth solution of (\ref{eq1.2}), then we have\\
(i) for $p\in[2,\infty)$ and $t\in \mathbb{R}_+$,
\begin{equation}\label{eq3.1}
\begin{aligned}
\|\mathcal{H}(t, \cdot)\|_{L^p}^p+\int_0^t \int_{\mathbb{R}^3}|\nabla \mathcal{H}(s, x)|^2|\mathcal{H}(s, x)|^{p-2} d x d s & \leq\left\|\mathcal{H}_0\right\|_{L^p}^p ;\\
\|\mathcal{H}(t,\cdot)\|_{L^\infty}\leq \|\mathcal{H}_0\|_{L^\infty};\\
\|\rho(t, \cdot)\|_{L^p}^p+\int_0^t \int_{\mathbb{R}^3}|\nabla \rho(s, x)|^2|\rho(s, x)|^{p-2} d x d s & \leq\left\|\rho_0\right\|_{L^p}^p ; \\
\|\rho(t,\cdot)\|_{L^\infty}\leq \|\rho_0\|_{L^\infty}.
\end{aligned}
\end{equation}
(ii) for $(u_0,h_0,\rho_0)\in L^2$ and $t\in \mathbb{R}_+$,
\begin{equation}\label{eq3.2}
\begin{aligned}
& \|(u, h)(t, \cdot)\|_{L^2}^2+\int_0^t\left\|\nabla h\left(s, \cdot\right)\right\|_{L^2}^2 d s \leq C_0(1+t)^2, \\
\end{aligned}
\end{equation}
where $C_0$ depends only on $\|(u_0,h_0,\rho_0)\|_{L^2}$.
\end{Pro}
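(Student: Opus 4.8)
The plan is to read the four bounds of part (i) as energy estimates for two \emph{scalar advection--diffusion equations}, and part (ii) as a coupled $L^2$ estimate in which the Lorentz work cancels exactly and the only surviving source is the buoyancy term, which is controlled by part (i). All computations are performed on smooth, spatially decaying solutions, so every integration by parts below is legitimate.

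For (i), introduce the divergence-free planar drift $b:=u_r\boldsymbol{e_r}+u_z\boldsymbol{e_z}$ (indeed $\nabla\cdot b=\partial_r u_r+\tfrac{u_r}{r}+\partial_z u_z=0$ by (\ref{eq1.3})) and recall the good toroidal variable $\mathcal H:=h_\theta/r$, the magnetic analogue of $\Omega=\omega_\theta/r$. A direct computation from the $h_\theta$-equation in (\ref{eq1.3}) shows that the reaction term $-h_\theta u_r/r$ is absorbed by the weight, so that
$$\partial_t\mathcal H+b\cdot\nabla\mathcal H=\Big(\Delta+\tfrac{2}{r}\partial_r\Big)\mathcal H,\qquad \partial_t\rho+b\cdot\nabla\rho=\Delta\rho .$$
Testing the $\rho$-equation with $p|\rho|^{p-2}\rho$ and integrating over $\mathbb R^3$: the transport term drops because $\nabla\cdot b=0$, and the Laplacian yields $p(p-1)\int|\rho|^{p-2}|\nabla\rho|^2$, so $\frac{d}{dt}\|\rho\|_{L^p}^p+p(p-1)\int|\rho|^{p-2}|\nabla\rho|^2=0$; integrating in time and using $p(p-1)\ge1$ for $p\ge2$ gives the third line of (\ref{eq3.1}), and letting $p\to\infty$ (equivalently, the maximum principle) gives the $L^\infty$ bound. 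For $\mathcal H$ the same test produces, in addition, the term $-\tfrac1p\int_{\mathbb R^3}\tfrac{2}{r}\partial_r|\mathcal H|^p\,dx=\tfrac{4\pi}{p}\int_{\mathbb R}|\mathcal H(t,0,z)|^p\,dz\ge0$, i.e. an axis boundary term with the favourable sign (using decay at infinity and smoothness on $\{r=0\}$), so the same conclusion holds for $\mathcal H$: these are the first two lines of (\ref{eq3.1}).

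For (ii) (note $\mu=0$), test the momentum equation of (\ref{eq1.2}) with $u$ and the magnetic equation with $h$, then add. The convective terms and the pressure term vanish since $\nabla\cdot u=0$, and the two Lorentz contributions cancel:
$$\int_{\mathbb R^3}(h\cdot\nabla h)\cdot u\,dx+\int_{\mathbb R^3}(h\cdot\nabla u)\cdot h\,dx=\int_{\mathbb R^3}h_i\partial_i(h_j u_j)\,dx=-\int_{\mathbb R^3}(\nabla\cdot h)\,h_j u_j\,dx=0 .$$
Hence, writing $Y(t):=\|(u,h)(t,\cdot)\|_{L^2}$,
$$\tfrac12\tfrac{d}{dt}Y(t)^2+\|\nabla h(t,\cdot)\|_{L^2}^2=\int_{\mathbb R^3}\rho\,u_3\,dx\le\|\rho(t,\cdot)\|_{L^2}\|u(t,\cdot)\|_{L^2}\le\|\rho_0\|_{L^2}\,Y(t),$$
where the last inequality uses the $L^2$ bound for $\rho$ from part (i). This gives $Y'(t)\le\|\rho_0\|_{L^2}$, so $Y(t)\le\|(u_0,h_0)\|_{L^2}+\|\rho_0\|_{L^2}t$ and $Y(t)^2\le C_0(1+t)^2$; integrating the differential identity in time and inserting this bound controls $\int_0^t\|\nabla h\|_{L^2}^2\,ds$ by $C_0(1+t)^2$ as well, with $C_0$ depending only on $\|(u_0,h_0,\rho_0)\|_{L^2}$. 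This is (\ref{eq3.2}).

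The only step that is not completely routine is the algebra behind $\mathcal H$: one must verify that $h_\theta/r$ (not $h_\theta$, not $r h_\theta$) is precisely the variable for which the stretching-type term disappears, and that the surviving first-order operator $\tfrac2r\partial_r$ remains dissipative for the $L^p$ energy, i.e. that after testing against $|\mathcal H|^{p-2}\mathcal H$ and integrating with the three-dimensional volume element it contributes only the nonnegative axis boundary integral computed above. Everything else is a standard energy estimate, and these bounds are exactly the input required for the higher-order and single-component analysis carried out in the following subsections.
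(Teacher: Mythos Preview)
Your proof is correct and follows essentially the same approach as the paper: the same drift--diffusion equation for $\mathcal H=h_\theta/r$ (with the $\tfrac{2}{r}\partial_r$ term shown to be sign-favourable), the same $L^p$ test for $\mathcal H$ and $\rho$, and the same coupled $L^2$ estimate for $(u,h)$ with Lorentz cancellation and buoyancy controlled by $\|\rho_0\|_{L^2}$. In fact you supply more detail than the paper, which for part (ii) only sketches the argument and refers to Proposition~2.1 of \cite{26}.
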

\begin{proof}
Firstly, we define $\mathcal{H}:=\frac{h_\theta}{r}$, by $(\ref{eq1.3})_4$, one derives that
$$
\partial_t \mathcal{H}+\left(u_r \partial_r+u_z \partial_z\right) \mathcal{H}=\left(\Delta+\frac{2}{r} \partial_r\right) \mathcal{H} .
$$
Multiply the above equation by $p|\mathcal{H}|^{p-2}\mathcal{H}$ and integrate on $\mathbb{R}^3$, together with $\nabla u=0$, we can get:
$$
\frac{1}{p}\frac{d}{dt}\|\mathcal{H}(t,\cdot)\|_{L^p}^p+\int_{\mathbb{R}^3}|\nabla\mathcal{H}(t,\cdot)|^2|\mathcal{H}(t,\cdot)|^{p-2}dx\leq 0,
$$
and integrating over $(0, t)$ to obtain $(\ref{eq3.1})_{1,2}$.

Similarly for equation of $\rho$ of $(\ref{eq1.3})_5$, one derives $(\ref{eq3.1})_{3,4}$. For the estimation in (\ref{eq3.2}), we can get it by applying the standard $L^2$ inner product estimation for system (\ref{eq1.2}) and using $(\ref{eq3.1})_1$. Also, refer to Proposition 2.1 of \cite{26}.
\end{proof}

Next, we establish a reformulated system.

The vorticity of the axially symmetric velocity field $u$ is given by
$$
\omega(t,r,z)=\nabla\times u= \omega_r (t,r,z)\boldsymbol{e}_r+ \omega_\theta (t,r,z)\boldsymbol{e}_\theta+\omega_z (t,r,z)\boldsymbol{e}_z,
$$
where
$$
\omega_r =-\partial_z u_\theta, \quad \omega_\theta = \partial_z u_r -\partial_r u_z, \quad \omega_z = \partial_r u_\theta+\frac{u_\theta}{r}.
$$
By taking special derivatives of $(\ref{eq1.3})_{1,2,3}$, one concludes that $(\omega_r,\omega_\theta,\omega_z)$ satisfies
\begin{equation}\label{eq3.3}
\left\{\begin{array}{l}
\partial_t \omega_r+\left(u_r \partial_r+u_z \partial_z\right) \omega_r=\left(\omega_r \partial_r+\omega_z \partial_z\right) u_r, \\[2mm]
\partial_t \omega_\theta+\left(u_r \partial_r+u_z \partial_z\right) \omega_\theta=\frac{u_r}{r} \omega_\theta+\frac{1}{r} \partial_z\left(u_\theta^2\right)-\frac{1}{r} \partial_z\left(h_\theta^2\right)-\partial_r \rho,  \\[2mm]
\partial_t \omega_z+\left(u_r \partial_r+u_z \partial_z\right) \omega_z=\left(\omega_r \partial_r+\omega_z \partial_z\right) u_z. \\[2mm]
\end{array}\right.
\end{equation}
Denoting the following group of quantities:
$$
\begin{aligned}
\Omega:=\frac{\omega_\theta}{r},\quad J:=\frac{\omega_r}{r},\quad \mathcal{N}:=\frac{\partial_r\rho}{r},\quad \nabla\mathcal{H}:=\nabla\frac{h_\theta}{r}.
\end{aligned}
$$
From $(\ref{eq3.3})$ and Combined with $(\ref{eq1.3})_{4,5}$, we can get the following reformulated system for $(\Omega, J, \mathcal{N}, \nabla\mathcal{H})$:
\begin{equation}\label{eq3.4}
\begin{cases}
&\partial_t\Omega+u \cdot\nabla\Omega=-2\frac{u_\theta\omega_r}{r^2}-\partial_z \mathcal{H}^2-\mathcal{N},\\[2mm]
&\partial_t J+u \cdot\nabla J=(\omega_r\partial_r+\omega_z\partial_z)\frac{u_r}{r},\\[2mm]
&\partial_t \mathcal{N}+u \cdot \nabla \mathcal{N}-\left(\Delta+\frac{2}{r} \partial_r\right) \mathcal{N}=\partial_z u_z \mathcal{N}-\partial_r u_z \frac{\partial_z \rho}{r},\\[2mm]
&\partial_t \nabla \mathcal{H}+u \cdot \nabla \nabla \mathcal{H}+\nabla b \cdot \nabla \mathcal{H}-\left(\Delta+\frac{2}{r} \partial_r\right) \nabla \mathcal{H}-\nabla\left(\frac{2}{r}\right) \partial_r \mathcal{H}=0.
\end{cases}
\end{equation}

Then we give the following proposition of system (\ref{eq3.4}).
\begin{Pro}\label{Pro3.2} Let $(\Omega, J, \mathcal{N}, \nabla\mathcal{H})$ be defined as above, which solves $(\ref{eq3.4})$ with initial data
$$
\begin{aligned}
&(\Omega_0,J_0,\mathcal{N}_0,\mathcal{H}_0)\in (L^2\cap L^6)\times (L^2\cap L^6)\times L^2\times (L^\infty \cap H^1).
\end{aligned}
$$
Then, the following space-time estimate holds for any $t\in (0, T_\ast]$ that
\begin{equation}\label{eq3.5}
\begin{aligned}
\|(\Omega,J)(t,\cdot)\|^2_{L^2\cap L^6}+\|(\mathcal{N},\nabla\mathcal{H})(t,\cdot)\|^2_{L^2}+\int_0^t \|(\nabla\mathcal{N},\nabla^2\mathcal{H})(s,\cdot)\|^2_{L^2}ds\leq\Phi_{1,3}(t).
\end{aligned}
\end{equation}
\end{Pro}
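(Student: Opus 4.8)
The plan is to prove (\ref{eq3.5}) by a coupled energy estimate on the reformulated system (\ref{eq3.4}) — estimating $\Omega,J$ in $L^2\cap L^6$ and $\mathcal N,\nabla\mathcal H$ in $L^2$ — using the parabolic smoothing of the last two equations of (\ref{eq3.4}) as an absorbing device, and then closing the whole thing by Gr\"onwall's inequality. The hard part will be the first two equations of (\ref{eq3.4}): they are pure transport equations (no dissipation for $\Omega$ or $J$), so every source term there must be bounded by quantities already in the budget, which for the vortex-stretching term $(\omega_r\partial_r+\omega_z\partial_z)\frac{u_r}{r}$ of the $J$-equation forces a careful re-expression of the bare vorticity components in terms of the scaled quantities $\Omega,J$ and of $\frac{u_r}{r},\nabla\frac{u_r}{r},\frac{u_\theta}{r}$.

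First I would collect the a priori inputs. From Proposition \ref{Pro3.1} I keep $\|\mathcal H(t)\|_{L^p}\le\|\mathcal H_0\|_{L^p}$ for all $p\in[2,\infty]$, $\|\rho(t)\|_{L^p}\le\|\rho_0\|_{L^p}$, the corresponding dissipation gains, and $\|(u,h)(t)\|_{L^2}^2+\int_0^t\|\nabla h(s,\cdot)\|_{L^2}^2\,ds\le C_0(1+t)^2$. I would also note that $\Pi:=\frac{u_\theta}{r}$ solves the pure transport equation $\partial_t\Pi+u\cdot\nabla\Pi=0$ (divide $(\ref{eq1.3})_2$ by $r$ and use $u\cdot\nabla\frac1r=-\frac{u_r}{r^2}$), so $\|\Pi(t)\|_{L^\infty}=\|\frac{u_{0,\theta}}{r}\|_{L^\infty}=:C_1$ is conserved; equivalently $J=-\partial_z\Pi$ and $\partial_tJ+u\cdot\nabla J=\partial_zu_r\,\partial_r\Pi+\partial_zu_z\,\partial_z\Pi$. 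Next, Lemma \ref{Lem2.5} together with the Sobolev embedding $\dot W^{1,2}\cap\dot W^{1,6}(\mathbb R^3)\hookrightarrow L^\infty$ gives $\|\frac{u_r}{r}\|_{L^\infty}\lesssim\|\nabla\frac{u_r}{r}\|_{L^2\cap L^6}\lesssim\|\Omega\|_{L^2\cap L^6}$, and, using $\omega_\theta=\partial_zu_r-\partial_ru_z$, also $\|\nabla\frac{u_r}{r}\|_{L^p}+\|\frac{\partial_zu_r}{r}\|_{L^p}+\|\frac{\partial_ru_z}{r}\|_{L^p}\lesssim\|\Omega\|_{L^p}$. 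Finally, $-(\Delta+\frac2r\partial_r)$ is, up to the $r$-weight, the Laplacian in $\mathbb R^5$, so an integration by parts yields $-\int_{\mathbb R^3}(\Delta+\frac2r\partial_r)f\cdot f\,dx\ge\int_{\mathbb R^3}|\nabla f|^2\,dx$ for axisymmetric $f$ decaying at infinity; this is what produces the dissipation $\|\nabla\mathcal N\|_{L^2}^2$ and $\|\nabla^2\mathcal H\|_{L^2}^2$ on the left of (\ref{eq3.5}).

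For the $\Omega$-estimate I test $(\ref{eq3.4})_1$ against $\Omega$ and against $|\Omega|^4\Omega$; the convective term drops by $\nabla\cdot u=0$, and of the three sources $-2\frac{u_\theta\omega_r}{r^2}=-2\Pi J$ is $\le2C_1\|J\|_{L^p}\|\Omega\|_{L^p}^{p-1}$, $\partial_z\mathcal H^2=2\mathcal H\partial_z\mathcal H$ is $\le2\|\mathcal H_0\|_{L^\infty}\|\partial_z\mathcal H\|_{L^p}\|\Omega\|_{L^p}^{p-1}$ (with $\|\partial_z\mathcal H\|_{L^6}\lesssim\|\nabla^2\mathcal H\|_{L^2}$), and $\mathcal N$ gives $\le\|\mathcal N\|_{L^p}\|\Omega\|_{L^p}^{p-1}$ (with $\|\mathcal N\|_{L^6}\lesssim\|\nabla\mathcal N\|_{L^2}$), so dividing by $\|\Omega\|_{L^p}^{p-1}$ and taking $p=2,6$ I obtain
\[
\frac{d}{dt}\|\Omega\|_{L^2\cap L^6}\lesssim\|\Omega\|_{L^2\cap L^6}+C_1\|J\|_{L^2\cap L^6}+\|\nabla\mathcal N\|_{L^2}+\|\nabla^2\mathcal H\|_{L^2}.
\]
The crux is the $J$-estimate: testing $(\ref{eq3.4})_2$ against $J$ and $|J|^4J$ there is no dissipation to spare, so I would use $\omega_r=rJ$, the divergence-free relation $\partial_ru_r+\frac{u_r}{r}+\partial_zu_z=0$ and the identity above (with $\partial_z\Pi=-J$) to rewrite the stretching term as a sum in which every bare factor of $\omega$ or $\nabla u$ recombines, via its $r$-weight, into one of $\Omega,\ \nabla\frac{u_r}{r},\ \frac{u_r}{r},\ \Pi,\ J$; bounding each summand by H\"older and the $L^2$--$L^6$ interpolation of Lemma \ref{Lem2.1} (using $\|\frac{u_r}{r}\|_{L^\infty}\lesssim\|\Omega\|_{L^2\cap L^6}$, $\|\Pi\|_{L^\infty}\le C_1$, $\|\nabla\frac{u_r}{r}\|_{L^p}\lesssim\|\Omega\|_{L^p}$) then gives $\frac{d}{dt}\|J\|_{L^2\cap L^6}\lesssim(1+\|\Omega\|_{L^2\cap L^6})\|J\|_{L^2\cap L^6}$. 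This $r$-weight bookkeeping — making sure nothing unscaled survives — is precisely what makes the system \emph{self-closed}; it follows the scheme of \cite{15}, and I expect it to be the most delicate computation in the proof.

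For the $\mathcal N$-estimate I test $(\ref{eq3.4})_3$ against $\mathcal N$: the dissipative operator contributes $\|\nabla\mathcal N\|_{L^2}^2$, the source $\partial_zu_z\,\mathcal N$ is handled by an integration by parts in $z$ (or by $\partial_zu_z=-\partial_ru_r-\frac{u_r}{r}$) together with $\|\mathcal N\|_{L^6}\lesssim\|\nabla\mathcal N\|_{L^2}$ and Young's inequality, absorbing $\varepsilon\|\nabla\mathcal N\|_{L^2}^2$ and leaving a coefficient controlled by $\|u\|_{L^2}\lesssim1+t$, and $\partial_ru_z\frac{\partial_z\rho}{r}=\frac{\partial_ru_z}{r}\,\partial_z\rho$ is bounded using $\|\frac{\partial_ru_z}{r}\|_{L^6}\lesssim\|\Omega\|_{L^6}$, the $\rho$-dissipation $\|\partial_z\rho\|_{L^2}\in L^2_t$ from Proposition \ref{Pro3.1}, and Young. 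For the $\nabla\mathcal H$-estimate I test $(\ref{eq3.4})_4$ against $\nabla\mathcal H$: the dissipative operator contributes $\|\nabla^2\mathcal H\|_{L^2}^2$, the convective term integrates to zero, $\nabla(\frac2r)\partial_r\mathcal H$ is lower order, and the quadratic term $\nabla b\cdot\nabla\mathcal H$ is treated by writing it out component-wise, replacing $\partial_ru_r+\partial_zu_z$ by $-\frac{u_r}{r}$, integrating the surviving $\partial_zu_z$-type piece by parts in $z$ onto $\nabla\mathcal H$, and closing with $\|\frac{u_r}{r}\|_{L^\infty}\lesssim\|\Omega\|_{L^2\cap L^6}$, $\|\nabla\mathcal H\|_{L^6}\lesssim\|\nabla^2\mathcal H\|_{L^2}$ and the $L^2$-bound on $u$. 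Adding the four differential inequalities, the $\mathcal N$- and $\nabla\mathcal H$-dissipations absorb (after Young) the $\|\nabla\mathcal N\|_{L^2}$ and $\|\nabla^2\mathcal H\|_{L^2}$ that appeared on the right of the $\Omega$-estimate, so the system closes; Gr\"onwall's inequality then yields (\ref{eq3.5}), the $(1+t)^2$-coefficients (ultimately $\|u\|_{L^2}^2\lesssim(1+t)^2$) integrating to a single exponential in $t^3$, i.e.\ the profile $\Phi_{1,3}(t)$.
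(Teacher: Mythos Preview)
Your proposal has a genuine gap: you never invoke the BKM-type assumption (\ref{eq1.4}), and without it the $(\Omega,J)$ part of the argument cannot close. Proposition~\ref{Pro3.2} is a \emph{conditional} estimate on $(0,T_\ast]$ under the hypothesis $\int_0^{T_\ast}\|\nabla\times(u_\theta\boldsymbol e_\theta)(t,\cdot)\|_{L^\infty}\,dt\le C_\ast$; if your scheme worked as written, you would in fact be proving unconditional global regularity for the inviscid axisymmetric system with swirl, which is not what is being claimed.

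Two concrete places where the attempt breaks down. First, your transport claim for $\Pi=u_\theta/r$ is incorrect: dividing $(\ref{eq1.3})_2$ by $r$ gives $\partial_t\Pi+u\cdot\nabla\Pi=-2\tfrac{u_r}{r}\,\Pi$, not $=0$, so $\|\Pi(t)\|_{L^\infty}$ is \emph{not} conserved and your constant $C_1$ is unavailable. Second, in the $J$-equation the stretching term $(\omega_r\partial_r+\omega_z\partial_z)\tfrac{u_r}{r}$ contains $\omega_z=\partial_r u_\theta+\tfrac{u_\theta}{r}=r\,\partial_r\Pi+2\Pi$, whose first summand carries an unweighted factor of $r$ that your ``$r$-weight bookkeeping'' does not eliminate; you cannot reduce this to quantities in the budget $\Omega,J,\nabla\tfrac{u_r}{r},\tfrac{u_r}{r},\Pi$ alone. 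The paper handles both points by appealing directly to the criterion: it uses Lemma~\ref{Lem2.7} to bound $\|\tfrac{u_\theta}{r}\|_{L^\infty}\le\tfrac12\|\omega_z\|_{L^\infty}$ and Lemma~\ref{Lem2.5} to bound $\|\nabla\tfrac{u_r}{r}\|_{L^p}\lesssim\|\Omega\|_{L^p}$, obtaining
\[
\tfrac{d}{dt}\|\Omega\|_{L^p}\lesssim\|\nabla\times(u_\theta\boldsymbol e_\theta)\|_{L^\infty}\|J\|_{L^p}+\ldots,\qquad
\tfrac{d}{dt}\|J\|_{L^p}\lesssim\|\nabla\times(u_\theta\boldsymbol e_\theta)\|_{L^\infty}\|\Omega\|_{L^p},
\]
and then Gr\"onwall with the integrability assumption (\ref{eq1.4}) closes the $(\Omega,J)$ block. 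Your treatment of $\mathcal N$ and $\nabla\mathcal H$ is along the right lines and matches the paper's (\ref{eq3.6})--(\ref{eq3.8}); the missing ingredient is precisely the use of (\ref{eq1.4}) in the $(\Omega,J)$ step.
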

\begin{proof} For the proof of (\ref{eq3.5}), we first derive an estimate about $\nabla\mathcal{H}$ from $\Omega$, and then an estimate about $\mathcal{N}$. Then, the estimate of $(\Omega,J)$ is obtained from the first two parts. Finally, we combine these estimates to get (\ref{eq3.5}). The following is specific to the process:\\

At the beginning, we derive the estimate of $\nabla\mathcal{H}$. By performing $L^2$ inner product of $\nabla\mathcal{H}$, $(\ref{eq3.4})_4$ follows that:
$$
\begin{aligned}
\frac{1}{2}\frac{d}{dt}\|\nabla\mathcal{H}(t,\cdot)\|_{L^2}^2+&\|\nabla^2\mathcal{H}(t,\cdot)\|_{L^2}^2-\int_{\mathbb{R}^3}\frac{2}{r}\partial_r\nabla \mathcal{N}\cdot\nabla \mathcal{N} dx-\sum^{2}_{i=1}\int_{\mathbb{R}^3}\partial_i(\frac{2}{r})\partial_r \mathcal{H}\partial_i \mathcal{H}dx\\
&=-\sum^{3}_{i,j=1}\int_{\mathbb{R}^3}\partial_i u_j\partial_j \mathcal{H}\partial_i \mathcal{H}dx.
\end{aligned}
$$
Here, one can follow the same way as Proposition 3.2 of \cite{17} to carry out estimates. This ends up with:
\begin{equation}\label{eq3.6}
\begin{aligned}
\|\nabla \mathcal{H}(t, \cdot)\|_{L^2}^2+\int_0^t\left\|\nabla^2 \mathcal{H}(s, \cdot)\right\|_{L^2}^2 d s \lesssim\left\|\nabla \mathcal{H}_0\right\|_{L^2}^2+\int_0^t\|\Omega(s, \cdot)\|_{L^2 \cap L^6}^2\|\nabla h(s, \cdot)\|_{L^2}^2 d s .
\end{aligned}
\end{equation}
Then, we get the estimate of $\mathcal{N}$. By taking the $L^2$ inner product with $\mathcal{N}$ for $(\ref{eq3.4})_3$, then integrating on $\mathbb{R}^3$, one has
\begin{equation}\label{eq3.7}
\begin{aligned}
\frac{1}{2} \frac{d}{d t}\|\mathcal{N}(t, \cdot)\|_{L^2}^2+\|\nabla \mathcal{N}(t, \cdot)\|_{L^2}^2&+2 \pi \int_0^{\infty}|\mathcal{N}(t, 0, z)|^2 d z\\[2mm]
&=\int_{\mathbb{R}^3} \partial_z u_z \mathcal{N}^2 d x-\int_{\mathbb{R}^3} \partial_r u_z \frac{\partial_z \rho}{r} \mathcal{N} d x .
\end{aligned}
\end{equation}
Using the method in the proof of Proposition 3.2 in \cite{15}, (\ref{eq3.7}) can be written by:
\begin{equation}\label{eq3.8}
\|\mathcal{N}(t,\cdot)\|_{L^2}^2+\int_0^t\|\nabla\mathcal{N}(s,\cdot)\|_{L^2}^2ds\lesssim\|\mathcal{N}_0\|_{L^2}^2+ \int_0^t\|\Omega(s,\cdot)\|_{L^2\cap L^6}^2\|\nabla\rho\|_{L^2}^2ds.
\end{equation}
Next is the estimate of $(\Omega, J)$. we perform $L^p$ $(2\leq p\leq 6)$ energy estimates of $(\ref{eq3.4})_1$ and $(\ref{eq3.4})_2$ respectively to get:
$$
\begin{aligned}
\frac{d}{dt}\|\Omega(t,\cdot)\|_{L^p}&\lesssim \|\frac{u_\theta}{r}(t,\cdot)\|_{L^\infty}\|J(t,\cdot)\|_{L^p}+\|\partial_z \mathcal{H}^2(t,\cdot)\|_{L^p}+\|\mathcal{N}(t,\cdot)\|_{L^p},\\[2mm]
\end{aligned}
$$
$$
\begin{aligned}
\frac{d}{dt}\|J(t,\cdot)\|_{L^p}&\lesssim \|(\omega_r,\omega_z)(t,\cdot)\|_{L^\infty}\|\nabla\frac {u_r}{r}(t,\cdot)\|_{L^p}.\\[2mm]
\end{aligned}
$$
By Lemma \ref{Lem2.7} and using the identity $\nabla\times(u_\theta \boldsymbol{e_\theta})= \omega_r \boldsymbol{e_r}+\omega_z \boldsymbol{e_z}$, together with (\ref{eq2.2}) of Lemma \ref{Lem2.5}, we can get:
$$
\begin{aligned}
\frac{d}{dt}\|\Omega(t,\cdot)\|_{L^p}&\lesssim \|\omega_z(t,\cdot)\|_{L^\infty}\|J(t,\cdot)\|_{L^p}+\|\partial_z \mathcal{H}^2(t,\cdot)\|_{L^p}+\|\mathcal{N}(t,\cdot)\|_{L^p}\\[2mm]
&\lesssim \|\nabla\times(u_\theta \boldsymbol{e_\theta})(t,\cdot)\|_{L^\infty}\|J(t,\cdot)\|_{L^p}+\|\partial_z \mathcal{H}^2(t,\cdot)\|_{L^p}+\|\mathcal{N}(t,\cdot)\|_{L^p},\\[2mm]
\end{aligned}
$$
$$
\begin{aligned}
\frac{d}{dt}\|J(t,\cdot)\|_{L^p}&\lesssim \|\nabla\times(u_\theta \boldsymbol{e_\theta})(t,\cdot)\|_{L^\infty}\|\nabla\frac{ u_r}{r}(t,\cdot)\|_{L^p}\\[2mm]
&\lesssim \|\nabla\times(u_\theta \boldsymbol{e_\theta})(t,\cdot)\|_{L^\infty}\|\Omega(t,\cdot)\|_{L^p}.\\[2mm]
\end{aligned}
$$
Integrating with time and using  $(\ref{eq3.1})_2$, one derives that:
\begin{equation}\label{eq3.9}
\begin{aligned}
\|\Omega(t,\cdot)\|_{L^p}\lesssim& \|\Omega_0\|_{L^p}+\int_0^t \|\nabla\times(u_\theta \boldsymbol{e_\theta})(s,\cdot)\|_{L^\infty}\|J(s,\cdot)\|_{L^p}ds\\
&+\|\mathcal{H}_0\|_{L^\infty}\int_0^t \|\partial_z \mathcal{H}(s,\cdot)\|_{L^p}ds+\int_0^t\|\mathcal{N}(s,\cdot)\|_{L^p}ds,
\end{aligned}
\end{equation}
\begin{equation}\label{eq3.10}
\begin{aligned}
\|J(t,\cdot)\|_{L^p}&\lesssim \|J_0\|_{L^p}+\int_0^t \|\nabla\times(u_\theta \boldsymbol{e_\theta})(s,\cdot)\|_{L^\infty}\|\Omega(s,\cdot)\|_{L^p}ds.
\end{aligned}
\end{equation}
Combined (\ref{eq3.9}) and (\ref{eq3.10}),
$$
\begin{aligned}
\|(\Omega,J)(t,\cdot)\|_{L^p}\lesssim& \|(\Omega_0,J_0)\|_{L^p}+\int_0^t \|\nabla\times(u_\theta \boldsymbol{e_\theta})(s,\cdot)\|_{L^\infty}\|(\Omega,J)(s,\cdot)\|_{L^p}ds\\[2mm]
&+\|\mathcal{H}_0\|_{L^\infty}\int_0^t \|\partial_z \mathcal{H}(s,\cdot)\|_{L^p}ds+\int_0^t\|\mathcal{N}(s,\cdot)\|_{L^p}ds.
\end{aligned}
$$
Using Gr\"{o}nwall inequality,
$$
\begin{aligned}
\|(\Omega,J)(t,\cdot)\|_{L^p}\lesssim \left(\|(\Omega_0,J_0)\|_{L^p}+\|\mathcal{H}_0\|_{L^\infty}\int_0^t \|\partial_z \mathcal{H}(s,\cdot)\|_{L^p}ds+\int_0^t\|\mathcal{N}(s,\cdot)\|_{L^p}ds\right)\\
\times\exp\left(\int_0^t \|\nabla\times(u_\theta \boldsymbol{e_\theta})(s,\cdot)\|_{L^\infty}ds\right).
\end{aligned}
$$
By (\ref{eq1.4}), for any $t\leq T_\ast$, we conclude for $2 \leq p\leq 6$ that:
\begin{equation}\label{eq3.11}
\begin{aligned}
&\|(\Omega,J)(t,\cdot)\|_{L^p}\\
\leq& C_{C_\ast} \left(\|(\Omega_0,J_0)\|_{L^p}+\|\mathcal{H}_0\|_{L^\infty}\int_0^t \|\partial_z \mathcal{H}(s,\cdot)\|_{L^p}ds+\int_0^t\|\mathcal{N}(s,\cdot)\|_{L^p}ds\right).\\
\end{aligned}
\end{equation}
Finally, choosing $p=2$ and $p=6$ in (\ref{eq3.11}), one derives
$$
\begin{aligned}
&\|\Omega(t, \cdot)\|_{L^2 \cap L^6}^2 \\
\lesssim & \left\|\Omega_0\right\|_{L^2 \cap L^6}^2+\left\|\mathcal{H}_0\right\|_{L^{\infty}}^2\left(\int_0^t\left\|\partial_z \mathcal{H}(s, \cdot)\right\|_{L^2} d s+\int_0^t\left\|\partial_z \mathcal{H}(s, \cdot)\right\|_{L^6}d s\right)^2 \\
& +\left(\int_0^t\|\mathcal{N}(s, \cdot)\|_{L^2} d s+\int_0^t\|\mathcal{N}(s, \cdot)\|_{L^6} d s\right)^2 .
\end{aligned}
$$
Using the Sobolev inequality and the H\"{o}lder's inequality, one deduces
$$
\begin{aligned}
\|\Omega(t, \cdot)\|_{L^2 \cap L^6}^2 \lesssim & \left\|\Omega_0\right\|_{L^2 \cap L^6}^2+t\left\|\mathcal{H}_0\right\|_{L^{\infty}}^2\left(\int_0^t\|\nabla \mathcal{H}(s, \cdot)\|_{L^2}^2 d s+\int_0^t\left\|\nabla^2 \mathcal{H}(s, \cdot)\right\|_{L^2}^2 d s\right) \\
& +\left(t^2 \sup _{s \in(0, t)}\|\mathcal{N}(s, \cdot)\|_{L^2}^2+t \int_0^t\|\nabla \mathcal{N}(s, \cdot)\|_{L^2}^2 d s\right) .
\end{aligned}
$$
Substituting (\ref{eq3.6}) and (\ref{eq3.8}) in the right hand side, we arrive
$$
\begin{aligned}
&\|\Omega(t, \cdot)\|_{L^2 \cap L^6}^2 \\
\lesssim &\left\|\Omega_0\right\|_{L^2 \cap L^6}^2+t\left\|\mathcal{H}_0\right\|_{L^{\infty}}^2\left(\left\|\mathcal{H}_0\right\|_{L^2}^2+\left\|\nabla \mathcal{H}_0\right\|_{L^2}^2+\int_0^t\|\Omega(s, \cdot)\|_{L^2 \cap L^6}^2\|\nabla h(s, \cdot)\|_{L^2}^2 d s\right) \\
& +\left(1+t^2\right)\left(\left\|\mathcal{N}_0\right\|_{L^2}^2+\int_0^t\|\Omega(s, \cdot)\|_{L^2 \cap L^6}^2\|\nabla \rho(s, \cdot)\|_{L^2}^2 d s\right) .
\end{aligned}
$$
This indicates, for any $t \leq T^\ast$,
$$
\begin{aligned}
&\|\Omega(t, \cdot)\|_{L^2 \cap L^6}^2 \\
\lesssim & \left\|\Omega_0\right\|_{L^2 \cap L^6}^2+t\left\|\mathcal{H}_0\right\|_{L^{\infty}}^2\left\|\mathcal{H}_0\right\|_{H^1}^2+t\left\|\mathcal{H}_0\right\|_{L^{\infty}}^2 \int_0^t\|\Omega(s, \cdot)\|_{L^2 \cap L^6}^2\|\nabla h(s, \cdot)\|_{L^2}^2 d s \\
& +\left(1+t^2\right)\left\|\mathcal{N}_0\right\|_{L^2}^2+\left(1+t^2\right) \int_0^t\|\Omega(s, \cdot)\|_{L^2 \cap L^6}^2\|\nabla \rho(s, \cdot)\|_{L^2}^2 d s .
\end{aligned}
$$
Thus by the Gr\"{o}nwall inequality:
$$
\begin{aligned}
&\|\Omega(t, \cdot)\|_{L^2 \cap L^6}^2 \\
\lesssim & \left(\left\|\Omega_0\right\|_{L^2 \cap L^6}^2+t\left\|\mathcal{H}_0\right\|_{L^{\infty}}^2\left\|\mathcal{H}_0\right\|_{H^1}^2+\left(1+t^2\right)\left\|\mathcal{N}_0\right\|_{L^2}^2\right) \\
& \times \exp \left(t\left\|\mathcal{H}_0\right\|_{L^{\infty}}^2 \int_0^t\|\nabla h(s, \cdot)\|_{L^2}^2 d s+\left(1+t^2\right) \int_0^t\|\nabla \rho(s, \cdot)\|_{L^2}^2 d s\right) .
\end{aligned}
$$
Using the fundamental energy estimates $(\ref{eq3.1})_1$ and $(\ref{eq3.1})_3$, one has
\begin{equation}\label{eq3.12}
\|\Omega(t, \cdot)\|_{L^2 \cap L^6}^2 \leq C_{0, C \ast}\left(1+t^2\right) \exp \left(C_0\left(1+t^3\right)\right) \leq \Phi_{1,3}(t), \quad \forall t \in\left(0, T_\ast\right] .
\end{equation}
Substituting (\ref{eq3.12}) in (\ref{eq3.6}) and (\ref{eq3.8}) respectively, using $(\ref{eq3.1})_1$ and $(\ref{eq3.1})_3$, one concludes
\begin{equation}\label{eq3.13}
\begin{aligned}
& \|\mathcal{N}(t, \cdot)\|_{L^2}^2+\int_0^t\|\nabla \mathcal{N}(s, \cdot)\|_{L^2}^2 d s+\|\nabla \mathcal{H}(t, \cdot)\|_{L^2}^2+\int_0^t\left\|\nabla^2 \mathcal{H}(s, \cdot)\right\|_{L^2}^2 d s \\
\leq & \Phi_{1,3}(t) \int_0^t\left(\|\nabla \rho(s, \cdot)\|_{L^2}^2+\|\nabla h(s, \cdot)\|_{L^2}^2\right) d s \\
\leq & \Phi_{1,3}(t), \quad \forall t \in\left(0, T_\ast\right] .
\end{aligned}
\end{equation}
Thus the proposition is proved by combining (\ref{eq3.12}) and (\ref{eq3.13}).
\end{proof}

The purpose of this next part is we derive one-component BKM-type criterion for MHD-Boussinesq system in this order: we first derive the $\|r\rho\|_{L^\infty_t L^2\cap L^2_t\dot{H}^1}$ and then get the $\|\nabla\rho\|_{L^\infty_t L^2\cap L^2_t\dot{H}^1}$. Next, we derive the $\|\nabla u\|_{L^\infty_t(L^2 \cap L^6)}$. It then follows that deduce the $\|(\nabla\partial_z\mathcal{H},\nabla h,\nabla^2 h,\nabla^2 \rho)\|_{L^\infty_t L^2\cap L^2_t\dot{H}^1}$. Finally, we deduce the $\|(\omega_\theta,\nabla h,\nabla \rho)\|_{L^1_t L^\infty }$.

We first give $L^{\infty}_t L_t^2\cap L_t^2\dot{H}^1$ estimate of $r\rho$ and $\nabla \rho$.
\begin{Pro}\label{Pro3.3} Under the same conditions as Theorem \ref{Th1.1}, $r\rho$ satisfies the following space-time estimate
\begin{equation}\label{eq3.14}
\|r \rho(t, \cdot)\|_{L^2}^2+\int_0^t\|\nabla(r \rho)(s, \cdot)\|_{L^2}^2 d s \leq C_0(1+t)^3,
\end{equation}
where $C_0>0$ is a constant depending only on the initial data $u_0, h_0$, and $\rho_0$.
\end{Pro}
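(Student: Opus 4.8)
The strategy is a weighted $L^2$ energy estimate with weight $r$: control $\|r\rho(t,\cdot)\|_{L^2}^2=\int_{\mathbb{R}^3}r^2\rho^2\,dx$ through the heat--transport equation $\partial_t\rho+u\cdot\nabla\rho=\Delta\rho$ of $(\ref{eq1.2})$. The key preliminary identity, valid for axisymmetric $\rho$ because $|\nabla r|=1$ and $\nabla r\cdot\nabla\rho=\partial_r\rho$, comes from expanding $|\nabla(r\rho)|^2=\rho^2+2r\rho\,\partial_r\rho+r^2|\nabla\rho|^2$ and integrating the cross term by parts in $r$ (the boundary contributions at $r=0$ and $r=\infty$ vanish since $r\rho$ is bounded, hence zero on the axis, and decays): this gives $\int_{\mathbb{R}^3}r^2|\nabla\rho|^2\,dx=\|\nabla(r\rho)\|_{L^2}^2+\|\rho\|_{L^2}^2$. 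Multiplying the $\rho$-equation by $r^2\rho$ and integrating, the transport term becomes $\int_{\mathbb{R}^3}r u_r\rho^2\,dx$ (using $\nabla\cdot u=0$ and $\nabla\cdot(r^2u)=2ru_r$) and the diffusion term becomes $-\int_{\mathbb{R}^3}r^2|\nabla\rho|^2\,dx$, so with the identity above one obtains
$$
\frac12\frac{d}{dt}\|r\rho(t,\cdot)\|_{L^2}^2+\|\nabla(r\rho)(t,\cdot)\|_{L^2}^2=\int_{\mathbb{R}^3}r u_r\rho^2\,dx+\|\rho(t,\cdot)\|_{L^2}^2 .
$$

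It remains to estimate the quadratic term so that only polynomial growth appears. I would write $r u_r\rho^2=(r\rho)(u_r\rho)$, apply H\"older with exponents $6$ and $6/5$, then Sobolev $\|r\rho\|_{L^6}\lesssim\|\nabla(r\rho)\|_{L^2}$ and H\"older $\|u_r\rho\|_{L^{6/5}}\le\|u_r\|_{L^2}\|\rho\|_{L^3}$. Since $\|u_r\|_{L^2}\le\|u\|_{L^2}$ with $\|u\|_{L^2}^2\le C_0(1+t)^2$ by $(\ref{eq3.2})$, and $\|\rho\|_{L^3}\le\|\rho_0\|_{L^3}$ by $(\ref{eq3.1})$, Young's inequality absorbs half of the dissipation and leaves a term bounded by $C_0(1+t)^2$. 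Combined with $\|\rho(t,\cdot)\|_{L^2}\le\|\rho_0\|_{L^2}$, this yields $\frac{d}{dt}\|r\rho\|_{L^2}^2+\|\nabla(r\rho)\|_{L^2}^2\le C_0(1+t)^2$; integrating in time from $0$ to $t$ --- the data term $\|r\rho_0\|_{L^2}$ being finite by the hypothesis of Theorem~\ref{Th1.1} --- gives exactly $(\ref{eq3.14})$, and no Gr\"onwall step is needed.

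The main obstacle is precisely the treatment of $\int_{\mathbb{R}^3}r u_r\rho^2\,dx$: the crude bound $\|\rho_0\|_{L^\infty}\|u_r\|_{L^2}\|r\rho\|_{L^2}$ forces a Gr\"onwall inequality against $\|r\rho\|_{L^2}$ and only produces $\|r\rho\|_{L^2}^2\lesssim(1+t)^4$, so one genuinely has to spend the parabolic smoothing on $r\rho$ in order to reach the sharper $(1+t)^3$. The only other point needing care is the justification of the integrations by parts --- the vanishing of the boundary terms on the axis $r=0$, where $r\rho\to0$ because $\|\rho\|_{L^\infty}\le\|\rho_0\|_{L^\infty}$ by $(\ref{eq3.1})$, and at spatial infinity --- which is routine for the smooth, decaying solutions under consideration and is carried out exactly as for $\mathcal{H}$, $\mathcal{N}$ and $\nabla\mathcal{H}$ in Propositions~\ref{Pro3.1}--\ref{Pro3.2}.
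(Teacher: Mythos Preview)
Your argument is correct. The paper does not actually give a proof of Proposition~\ref{Pro3.3}; it merely states the result and, in the surrounding text, defers to Proposition~3.4 of \cite{15} before moving on to Proposition~\ref{Pro3.4}. So there is no in-text proof to compare against, but your derivation is precisely the standard weighted $L^2$ estimate one expects here: multiply $(\ref{eq1.3})_5$ by $r^2\rho$, use the algebraic identity $\int_{\mathbb{R}^3} r^2|\nabla\rho|^2\,dx=\|\nabla(r\rho)\|_{L^2}^2+\|\rho\|_{L^2}^2$, and control the convection remainder $\int r u_r\rho^2\,dx$ by $\|r\rho\|_{L^6}\|u_r\|_{L^2}\|\rho\|_{L^3}$ together with Sobolev and the fundamental bounds (\ref{eq3.1})--(\ref{eq3.2}). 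Your remark that one must spend the dissipation $\|\nabla(r\rho)\|_{L^2}^2$ here---rather than bounding crudely by $\|\rho_0\|_{L^\infty}\|u\|_{L^2}\|r\rho\|_{L^2}$ and invoking Gr\"onwall, which would only yield $(1+t)^4$---is exactly the point, and is what produces the sharp exponent~$3$ needed downstream.
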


Next, we will use the weighted estimate of $r \rho$ of (\ref{eq3.14}) in Proposition \ref{Pro3.3} to establish the $L_t^{\infty} L^2 \cap L_t^2 \dot{H}^1$ estimate of $\nabla \rho$. This proposition can be found in Proposition 3.4 of \cite{15}, so we omit proof here.
\begin{Pro}\label{Pro3.4} Under the same conditions as Theorem \ref{Th1.1}, $\nabla \rho$ satisfies the following space-time estimate
\begin{equation}\label{eq3.15}
\|\nabla \rho(t, \cdot)\|_{L^2}^2+\int_0^t\left\|\nabla^2 \rho(s, \cdot)\right\|_{L^2}^2 d s \leq \Phi_{1,3}(t).
\end{equation}
\end{Pro}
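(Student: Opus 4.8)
\textbf{Proof proposal for Proposition \ref{Pro3.4}.}

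The plan is to run an $\dot H^1$ energy estimate for $\rho$ and to close it using only the bounds already established — Propositions \ref{Pro3.1}, \ref{Pro3.2} and the weighted estimate (\ref{eq3.14}) of Proposition \ref{Pro3.3} — since a bound for $\|\nabla u\|_{L^2}$ is not available until the next stage. Testing $(\ref{eq1.3})_5$, i.e.\ $\partial_t\rho+(u_r\partial_r+u_z\partial_z)\rho-\Delta\rho=0$, against $-\Delta\rho$ and using $\nabla\cdot u=0$ in the convective term gives the identity
\[
\tfrac{1}{2}\tfrac{d}{dt}\|\nabla\rho(t,\cdot)\|_{L^2}^2+\|\nabla^2\rho(t,\cdot)\|_{L^2}^2=-\int_{\mathbb{R}^3}\nabla u:\nabla\rho\otimes\nabla\rho\,dx ,
\]
where one uses $\|\Delta\rho\|_{L^2}\simeq\|\nabla^2\rho\|_{L^2}$ on $\mathbb{R}^3$; the dissipation on the left is the quantity to be protected by Young's inequality.

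The core of the argument — and the main obstacle — is the right‑hand side, because the standard Boussinesq bound $\int|\nabla u|\,|\nabla\rho|^2\,dx\lesssim\|\nabla u\|_{L^2}\|\nabla\rho\|_{L^2}^{1/2}\|\nabla^2\rho\|_{L^2}^{3/2}$ cannot be used yet. Here I would exploit the axisymmetric structure: since $\nabla\rho=(\partial_r\rho)\boldsymbol{e_r}+(\partial_z\rho)\boldsymbol{e_z}$ carries no swirl, only derivatives of the poloidal field $b:=u_r\boldsymbol{e_r}+u_z\boldsymbol{e_z}$ enter the integral, and $\partial_r\rho=r\mathcal{N}$. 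I would then split the integral into the contribution involving $\tfrac{u_r}{r}$ (isolated using $\nabla\cdot u=0$), which is controlled via $\|\tfrac{u_r}{r}\|_{L^6}\lesssim\|\nabla\tfrac{u_r}{r}\|_{L^2}\lesssim\|\Omega\|_{L^2}$ (Lemma \ref{Lem2.5}) and the interpolation $\|\nabla\rho\|_{L^{12/5}}\lesssim\|\nabla\rho\|_{L^2}^{3/4}\|\nabla^2\rho\|_{L^2}^{1/4}$, producing a bound $\varepsilon\|\nabla^2\rho\|_{L^2}^2+C\|\Omega\|_{L^2}^{4/3}\|\nabla\rho\|_{L^2}^2$; and the remaining contributions, in which a derivative still sits on $b$, which I would integrate by parts in $r$ so as to shift that derivative onto $\rho$. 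After this, each surviving term has the shape $\int b\cdot(\cdots)\,dx$ with at most one factor $\nabla^2\rho$ (absorbable after Young) and with the other factors controlled by $\|b\|_{L^2}\le\|u\|_{L^2}$ (polynomial in $t$ by (\ref{eq3.2})), by $\|\mathcal{N}\|_{L^2}$ and $\int_0^t\|\nabla\mathcal{N}\|_{L^2}^2\,ds$ (Proposition \ref{Pro3.2}), and by the weighted quantities $\|r\rho\|_{L^2}$, $\int_0^t\|\nabla(r\rho)\|_{L^2}^2\,ds$ (Proposition \ref{Pro3.3}); the whole point of the bookkeeping is that every power of $r$ produced by the substitution $\partial_r\rho=r\mathcal{N}$ can be absorbed into one of these bounds, so that $\nabla b$ (equivalently $\omega_\theta$) is never needed in an unweighted norm. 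The net result is
\[
\tfrac{d}{dt}\|\nabla\rho(t,\cdot)\|_{L^2}^2+\|\nabla^2\rho(t,\cdot)\|_{L^2}^2\le g(t)\,\|\nabla\rho(t,\cdot)\|_{L^2}^2+f(t),
\]
with $\sup_{s\le t}g(s)\le\Phi_{1,3}(t)$ and $\int_0^t f(s)\,ds\le\Phi_{1,3}(t)$, both by Propositions \ref{Pro3.1}, \ref{Pro3.2}, \ref{Pro3.3}.

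To finish I would deliberately \emph{not} invoke Grönwall (which would cost an extra exponential and leave $\Phi_{2,3}$), but instead integrate in time directly and absorb the coefficient $g$ using the basic parabolic estimate $\int_0^t\|\nabla\rho(s,\cdot)\|_{L^2}^2\,ds\le\|\rho_0\|_{L^2}^2$ contained in $(\ref{eq3.1})_3$:
\[
\|\nabla\rho(t,\cdot)\|_{L^2}^2+\int_0^t\|\nabla^2\rho(s,\cdot)\|_{L^2}^2\,ds\le\|\nabla\rho_0\|_{L^2}^2+\Big(\sup_{s\le t}g(s)\Big)\|\rho_0\|_{L^2}^2+\int_0^t f(s)\,ds\le\Phi_{1,3}(t),
\]
which is (\ref{eq3.15}). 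Thus the delicate points are, first, expressing $\int\nabla u:\nabla\rho\otimes\nabla\rho\,dx$ entirely through $\tfrac{u_r}{r}$, $\mathcal{N}$, $\nabla\mathcal{N}$, $r\rho$, $\nabla(r\rho)$ and $\|u\|_{L^2}$ — which forces the axisymmetric reduction, the identity $\partial_r\rho=r\mathcal{N}$ and the integrations by parts in $r$ — and second, organising the closing step so that the $L^2_t\dot H^1$ smallness of $\nabla\rho$ (rather than Grönwall) absorbs the coefficient and keeps the bound at the single‑exponential level $\Phi_{1,3}$.
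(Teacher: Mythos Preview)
Your global plan is right: test $(\ref{eq1.3})_5$ against $-\Delta\rho$, reduce the cubic term to $\nabla b$ only (the swirl indeed drops out), and close at level $\Phi_{1,3}$ by integrating directly and using $\int_0^t\|\nabla\rho\|_{L^2}^2\,ds\le\|\rho_0\|_{L^2}^2$ from $(\ref{eq3.1})_3$ rather than Gr\"onwall. The paper itself omits the argument and refers to Proposition~3.4 of \cite{15}, so there is no line-by-line comparison to make; but the shape of your argument is the intended one.

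Where your proposal has a genuine gap is the mechanism for the cubic term. You say you will ``integrate by parts in $r$'' so that every surviving term is $\int b\cdot(\cdots)\,dx$ and then bound the $b$ factor by $\|b\|_{L^2}$. This breaks on the piece $\int_{\mathbb{R}^3}\partial_r u_z\,\partial_r\rho\,\partial_z\rho\,dx$: integrating by parts in $r$ leaves $\int u_z\,(\nabla\rho)(\nabla^2\rho)\,dx$, and there is no H\"older splitting that closes with $u_z$ only in $L^2$ --- you would need one of $\nabla\rho,\nabla^2\rho$ in $L^p$ with $p>2$ beyond what interpolation and absorption allow. The substitution $\partial_r^2\rho=\mathcal{N}+r\partial_r\mathcal{N}$ does not save this either, since the leftover $\int u_z\,\partial_r\mathcal{N}\,\partial_z(r\rho)\,dx$ has all three factors only in $L^2$.

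The fix (and this is what the referenced proof actually uses) is algebraic, not integration by parts: write every entry of $\nabla b$ as $\dfrac{u_r}{r}$ plus $r$ times a quantity already controlled in $L^6$, namely
\[
\partial_r u_r=\tfrac{u_r}{r}+r\,\partial_r\!\tfrac{u_r}{r},\quad
\partial_z u_r=r\,\partial_z\!\tfrac{u_r}{r},\quad
\partial_r u_z=r\,\partial_z\!\tfrac{u_r}{r}-r\,\Omega,\quad
\partial_z u_z=-2\tfrac{u_r}{r}-r\,\partial_r\!\tfrac{u_r}{r},
\]
and then absorb the factor $r$ into one density derivative via $r\partial_z\rho=\partial_z(r\rho)$ and $r\partial_r\rho=\partial_r(r\rho)-\rho$. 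Each nontrivial term is then of the form $\int A\cdot\nabla\rho\cdot\nabla(r\rho)\,dx$ (or with $\rho$ in place of $\nabla(r\rho)$) with $A\in\{\Omega,\nabla(u_r/r)\}\subset L^6$ by Lemma~\ref{Lem2.5} and Proposition~\ref{Pro3.2}; H\"older $L^6\times L^3\times L^2$ with $\|\nabla\rho\|_{L^3}\lesssim\|\nabla\rho\|_{L^2}^{1/2}\|\nabla^2\rho\|_{L^2}^{1/2}$ then yields exactly the differential inequality you wrote, with $g(t)\lesssim\|\Omega\|_{L^2\cap L^6}^{a}$ and $f(t)\lesssim\|\Omega\|_{L^2\cap L^6}^{b}\big(\|\nabla(r\rho)\|_{L^2}^2+\|\rho\|_{L^2}^2\big)$, after which your final step gives $(\ref{eq3.15})$. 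So keep your skeleton and your closing argument, but replace the ``IBP in $r$ / control by $\|b\|_{L^2}$'' paragraph with this decomposition; that is the missing idea.
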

Before listing the estimate of the critical proof of the velocity field, we give a proposition to use.
\begin{Pro}\label{Pro3.5}
Under the same conditions as Theorem \ref{Th1.1}, the following $L^p$ estimate of $h_\theta$ satisfies
\begin{equation}\label{eq3.16}
\left\|h_\theta(t, \cdot)\right\|_{L^p} \leq \Phi_{2,3}(t),
\end{equation}
where for $p\in[2, \infty)$ is uniform.
\end{Pro}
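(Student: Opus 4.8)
The plan is to read the bound off a direct $L^p$ energy estimate for the $h_\theta$-equation $(\ref{eq1.3})_4$, supplying only the $L^\infty$ control of the coefficient $\frac{u_r}{r}$, which is already available from Proposition \ref{Pro3.2}. Note that one cannot obtain $(\ref{eq3.16})$ by multiplying the bound $\|\mathcal{H}(t,\cdot)\|_{L^\infty}\leq\|\mathcal{H}_0\|_{L^\infty}$ of $(\ref{eq3.1})$ by $r$, since the weight $r$ is unbounded; the $h_\theta$-equation must be used directly. First I would record the needed coefficient bound. Lemma \ref{Lem2.5} gives $\|\nabla\tfrac{u_r}{r}(t,\cdot)\|_{L^q}\leq C_q\|\Omega(t,\cdot)\|_{L^q}$ for $q\in\{2,6\}$, and the estimate $(\ref{eq3.5})$ of Proposition \ref{Pro3.2} yields $\|\Omega(t,\cdot)\|_{L^2\cap L^6}\leq\Phi_{1,3}(t)$ for all $t\leq T_\ast$. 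Since $(u,h,\rho)$ is a strong axially symmetric solution, $\frac{u_r}{r}$ is bounded near the axis (smoothness forces $u_r=O(r)$) and decays at spatial infinity, so a Gagliardo--Nirenberg/Sobolev interpolation of the type in Lemma \ref{Lem2.1} applies:
$$\|\tfrac{u_r}{r}(t,\cdot)\|_{L^\infty}\lesssim\|\nabla\tfrac{u_r}{r}(t,\cdot)\|_{L^2}^{1/2}\|\nabla\tfrac{u_r}{r}(t,\cdot)\|_{L^6}^{1/2}\lesssim\|\Omega(t,\cdot)\|_{L^2\cap L^6}\leq\Phi_{1,3}(t).$$
As $\Phi_{1,3}$ is nondecreasing, this gives $\int_0^t\|\tfrac{u_r}{r}(s,\cdot)\|_{L^\infty}\,ds\leq t\,\Phi_{1,3}(t)\leq\Phi_{1,3}(t)$.

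Next I would run the $L^p$ estimate. Testing $(\ref{eq1.3})_4$ with $|h_\theta|^{p-2}h_\theta$ and integrating over $\mathbb{R}^3$: the convection term vanishes because $\nabla\cdot u=0$; the diffusion terms contribute $-(p-1)\int_{\mathbb{R}^3}|h_\theta|^{p-2}|\nabla h_\theta|^2\,dx-\int_{\mathbb{R}^3}\tfrac{|h_\theta|^p}{r^2}\,dx\leq0$, which I simply discard; and the stretching term $\int_{\mathbb{R}^3}\tfrac{u_r}{r}|h_\theta|^p\,dx$ is bounded by $\|\tfrac{u_r}{r}\|_{L^\infty}\|h_\theta\|_{L^p}^p$. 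Hence
$$\frac1p\frac{d}{dt}\|h_\theta(t,\cdot)\|_{L^p}^p\leq\|\tfrac{u_r}{r}(t,\cdot)\|_{L^\infty}\|h_\theta(t,\cdot)\|_{L^p}^p,$$
equivalently $\frac{d}{dt}\|h_\theta(t,\cdot)\|_{L^p}\leq\|\tfrac{u_r}{r}(t,\cdot)\|_{L^\infty}\|h_\theta(t,\cdot)\|_{L^p}$, so the $p$-dependence drops out.

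Grönwall's inequality, together with the space-time bound on $\|\tfrac{u_r}{r}\|_{L^\infty}$ just obtained, then gives, for every $t\leq T_\ast$,
$$\|h_\theta(t,\cdot)\|_{L^p}\leq\|h_\theta(0,\cdot)\|_{L^p}\,\exp\!\Big(\int_0^t\|\tfrac{u_r}{r}(s,\cdot)\|_{L^\infty}\,ds\Big)\leq\|h_0\|_{L^p}\,\exp\big(\Phi_{1,3}(t)\big).$$
Since $h_0\in H^m$ with $m\geq3$, Sobolev embedding and interpolation give $\|h_0\|_{L^p}\leq\max\{\|h_0\|_{L^2},\|h_0\|_{L^\infty}\}$ uniformly in $p\in[2,\infty)$, and $\exp(\Phi_{1,3}(t))\leq\Phi_{2,3}(t)$ by the definition of the $\Phi_{k,\alpha}$; combining these yields $(\ref{eq3.16})$ with a bound uniform in $p$.

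The hard part is the first step, i.e. controlling $\|\tfrac{u_r}{r}\|_{L^\infty}$ by a $\Phi_{1,3}$-type quantity: this is exactly what keeps the Grönwall factor at the cost of only one extra exponential, producing $\Phi_{2,3}$ rather than something worse, and it requires both the $L^2$ and the $L^6$ parts of Proposition \ref{Pro3.2} as well as the mild (but genuinely used) fact that $\frac{u_r}{r}$ decays at infinity so that the interpolation inequality is legitimate. The $L^p$ estimate itself is routine and is made clean by the genuine parabolic dissipation present in the $h_\theta$-equation of $(\ref{eq1.2})$.
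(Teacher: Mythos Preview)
Your proof is correct and follows essentially the same route as the paper's own argument: both test $(\ref{eq1.3})_4$ against $|h_\theta|^{p-2}h_\theta$, discard the nonpositive diffusion contributions, bound the stretching term by $\|\tfrac{u_r}{r}\|_{L^\infty}\|h_\theta\|_{L^p}^p$, control $\|\tfrac{u_r}{r}\|_{L^\infty}$ via the Gagliardo--Nirenberg/Sobolev chain $\|\tfrac{u_r}{r}\|_{L^\infty}\lesssim\|\nabla\tfrac{u_r}{r}\|_{L^2}^{1/2}\|\nabla\tfrac{u_r}{r}\|_{L^6}^{1/2}\lesssim\|\Omega\|_{L^2\cap L^6}\leq\Phi_{1,3}(t)$ using Lemma \ref{Lem2.5} and Proposition \ref{Pro3.2}, and finish with Gr\"onwall. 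Your additional remarks on the decay of $\tfrac{u_r}{r}$ and on the uniform-in-$p$ bound for $\|h_0\|_{L^p}$ are helpful clarifications that the paper leaves implicit.
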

\begin{proof}
For any $p \geq 2$, taking $L^p$ inner product of $h_{\theta}$ on $(\ref{eq1.3})_4$, one derives
$$
\begin{aligned}
&\frac{1}{p} \frac{d}{d t}\left\|h_\theta(t, \cdot)\right\|_{L^p}^p \\
\leq&\left\|\frac{u_r}{r}(t, \cdot)\right\|_{L^{\infty}}\left\|h_\theta(t, \cdot)\right\|_{L^p}^p-\int_{\mathbb{R}^3} \frac{\left|h_\theta\right|^p}{r^2} d x-(p-1) \int_{\mathbb{R}^3}\left|\nabla h_\theta\right|^2\left|h_\theta\right|^{p-2} d x \\
\leq&\left\|\frac{u_r}{r}(t, \cdot)\right\|_{L^{\infty}}\left\|h_\theta(t, \cdot)\right\|_{L^p}^p .
\end{aligned}
$$
Here using Lemma \ref{Lem2.1}, Lemma \ref{Lem2.5}, together with (\ref{eq3.12}), one derives that:
\begin{equation}\label{eq3.17}
\begin{aligned}
\|\frac{u_r}{r}(t,\cdot)\|_{L^\infty}&\lesssim \|\frac{u_r}{r}(t,\cdot)\|_{L^6}^{\frac{1}{2}}\|\nabla\frac{u_r}{r}(t,\cdot)\|_{L^6}^{\frac{1}{2}}\lesssim \|\nabla\frac{u_r}{r}(t,\cdot)\|_{L^2}^{\frac{1}{2}}\|\nabla\frac{u_r}{r}(t,\cdot)\|_{L^6}^{\frac{1}{2}}\\
&\lesssim \|\Omega(t,\cdot)\|_{L^2}^{\frac{1}{2}}\|\Omega(t,\cdot)\|_{L^6}^{\frac{1}{2}}
\leq \Phi_{1,3}(t),
\end{aligned}
\end{equation}
then using the Gr\"onwall inequality, one finds
$$
\begin{aligned}
&\left\|h_\theta(t, \cdot)\right\|_{L^p} \leq\left\|h_0 \cdot \boldsymbol{e_\theta}\right\|_{L^p} \exp \left(\int_0^t\left\|\frac{u_r}{r}(s, \cdot)\right\|_{L^{\infty}} d s\right) \leq \Phi_{2,3}(t), \\
&\quad \text { uniformly for } p \in[2, \infty) .
\end{aligned}
$$
\end{proof}

 Based on Proposition \ref{Pro3.2}, Proposition \ref{Pro3.4}, and Proposition \ref{Pro3.5}, we can now get the estimate of the velocity field.
\begin{Pro}\label{Pro3.6} Under the same conditions as Theorem \ref{Th1.1}, the following $L^2 \cap L^6$ estimate of $\nabla u$
$$
\|\nabla u(t,\cdot)\|_{L^2 \cap L^6}\leq \Phi_{2,3}(t),
$$
holds uniformly for $0\leq t\leq T_\ast$.
\end{Pro}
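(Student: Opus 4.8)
The plan is to control $\|\nabla u\|_{L^2\cap L^6}$ through the vorticity, using the Calderón–Zygmund bound $\|\nabla u\|_{L^p}\lesssim_p\|\omega\|_{L^p}$ from Lemma \ref{Lem2.4}, which is valid for $1<p<\infty$ and in particular for $p=2$ and $p=6$. So it suffices to bound $\|\omega(t,\cdot)\|_{L^2\cap L^6}$, where $\omega=\omega_r\boldsymbol{e}_r+\omega_\theta\boldsymbol{e}_\theta+\omega_z\boldsymbol{e}_z$. The component $\omega_\theta$ is handled by $\omega_\theta=r\Omega$ together with the already-established bound $\|\Omega(t,\cdot)\|_{L^2\cap L^6}\leq\Phi_{1,3}(t)$ from Proposition \ref{Pro3.2} — but one must be careful since $r$ is unbounded, so instead I would estimate $\omega_\theta$ directly from its transport equation $(\ref{eq3.3})_2$ (or, more cleanly, note $\|\nabla b\|_{L^p}\lesssim_p\|\omega_\theta\|_{L^p}$ and close on $\omega_\theta$ alongside $\omega_r,\omega_z$). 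The swirl components $\omega_r,\omega_z$ satisfy the coupled system $(\ref{eq3.3})_{1,3}$ with right-hand sides $(\omega_r\partial_r+\omega_z\partial_z)u_r$ and $(\omega_r\partial_r+\omega_z\partial_z)u_z$ respectively; these are vortex-stretching terms that are formally quadratic in $\nabla u$.

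Concretely, I would perform $L^p$ energy estimates ($p=2$ and $p=6$) on the full vorticity system $(\ref{eq3.3})$. Using $\nabla\cdot u=0$ the transport terms drop out, and one is left with
$$
\frac{d}{dt}\|\omega(t,\cdot)\|_{L^p}\lesssim \|\nabla u(t,\cdot)\|_{L^\infty}\|\omega(t,\cdot)\|_{L^p}+\|\tfrac{u_r}{r}(t,\cdot)\|_{L^\infty}\|\omega_\theta(t,\cdot)\|_{L^p}+\|\partial_z(h_\theta^2)/r\|_{L^p}+\|\partial_z(u_\theta^2)/r\|_{L^p}+\|\partial_r\rho\|_{L^p}.
$$
The forcing terms are controlled by earlier results: $\|\partial_r\rho\|_{L^2\cap L^6}\lesssim\|\nabla\rho\|_{H^1}$ is bounded by $\Phi_{1,3}(t)$ via Proposition \ref{Pro3.4} and Sobolev embedding; $\|\tfrac{u_r}{r}\|_{L^\infty}\leq\Phi_{1,3}(t)$ was shown in (\ref{eq3.17}); the terms $\tfrac1r\partial_z(h_\theta^2)$ and $\tfrac1r\partial_z(u_\theta^2)$ can be written using $\mathcal{H}=h_\theta/r$ and $\tfrac{u_\theta}{r}$ and their derivatives, which are controlled by $\nabla\mathcal{H}$ (Proposition \ref{Pro3.2}), $\|h_\theta\|_{L^p}$ (Proposition \ref{Pro3.5}), and $\Omega$ (via Lemma \ref{Lem2.5} and Lemma \ref{Lem2.7}), all bounded by appropriate $\Phi_{k,3}(t)$. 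The genuinely dangerous term is $\|\nabla u\|_{L^\infty}\|\omega\|_{L^p}$, since an a priori $L^\infty$ bound on $\nabla u$ is exactly what is not yet available — this is the main obstacle.

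To close the $\|\nabla u\|_{L^\infty}$ term I would invoke the logarithmic Sobolev inequality, Lemma \ref{Lem2.3}: $\|\nabla u\|_{L^\infty}\lesssim 1+\|\nabla\times u\|_{L^\infty}\log(e+\|u\|_{H^3})$. However, since at this stage the $H^3$ norm is not yet controlled, the cleaner route — and the one consistent with the structure of the paper — is to replace $\|\nabla u\|_{L^\infty}$ in the stretching estimate by the logarithmic bound in terms of $\|\omega\|_{L^\infty}$ and then observe that $\omega=\omega_r\boldsymbol{e}_r+\omega_\theta\boldsymbol{e}_\theta+\omega_z\boldsymbol{e}_z$ with $\|(\omega_r,\omega_z)\|_{L^\infty}=\|\nabla\times(u_\theta\boldsymbol{e}_\theta)\|_{L^\infty}$ controlled by hypothesis (\ref{eq1.4}), while $\|\omega_\theta\|_{L^\infty}$ is handled separately — but in fact the simplest correct argument is: estimate $\omega_\theta$ in $L^2\cap L^6$ first (its equation $(\ref{eq3.3})_2$ has the linear amplification factor $u_r/r$, no vortex stretching, so Grönwall with (\ref{eq3.17}) and the forcing bounds closes it), then feed the resulting $L^6\hookrightarrow$ control plus the swirl data into the $\omega_r,\omega_z$ system where the stretching term $\|(\omega_r,\omega_z)\|_{L^\infty}\|\nabla(u_r,u_z)\|_{L^p}\lesssim\|\nabla\times(u_\theta\boldsymbol{e}_\theta)\|_{L^\infty}\|\omega\|_{L^p}$ is now linear in $\|\omega\|_{L^p}$ with an $L^1_t$ coefficient by (\ref{eq1.4}). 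A final Grönwall in $p=2$ and $p=6$, combined with all the $\Phi$-bounds above, yields $\|\omega(t,\cdot)\|_{L^2\cap L^6}\leq\Phi_{2,3}(t)$, and then Lemma \ref{Lem2.4} gives $\|\nabla u(t,\cdot)\|_{L^2\cap L^6}\leq\Phi_{2,3}(t)$ uniformly on $[0,T_\ast]$, as claimed. $\Box$
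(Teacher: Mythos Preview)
Your final strategy --- bound $\omega_\theta$ in $L^2\cap L^6$ first via Gr\"onwall with the amplification factor $u_r/r$, then feed this into the $(\omega_r,\omega_z)$ system where the stretching term becomes $\|\nabla\times(u_\theta\boldsymbol{e}_\theta)\|_{L^\infty}\|\nabla b\|_{L^p}\lesssim\|\nabla\times(u_\theta\boldsymbol{e}_\theta)\|_{L^\infty}\|\omega_\theta\|_{L^p}$, and finally apply Lemma~\ref{Lem2.4} --- is exactly the paper's route. There is, however, one genuine missing ingredient.

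In the $\omega_\theta$ equation $(\ref{eq3.3})_2$, the forcing term $\tfrac{1}{r}\partial_z(u_\theta^2)=-2\tfrac{u_\theta}{r}\,\omega_r$ requires, in the sequential approach you describe, an $L^p$ bound on $u_\theta/r$ for $p\in\{2,6\}$ (paired with $\|\omega_r\|_{L^1_tL^\infty}$, which is available by hypothesis). You claim this is controlled ``via Lemma~\ref{Lem2.5} and Lemma~\ref{Lem2.7}'', but neither applies: Lemma~\ref{Lem2.5} concerns $\nabla(u_r/r)$, not $u_\theta/r$, and Lemma~\ref{Lem2.7} only gives the $L^\infty$ norm of $u_\theta/r$ in terms of $\|\omega_z\|_{L^\infty}$, which is not bounded pointwise in time. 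The paper fills this gap with a preliminary step you omitted: from $(\ref{eq1.3})_2$ one derives
\[
\partial_t\Big(\frac{u_\theta}{r}\Big)+(u\cdot\nabla)\frac{u_\theta}{r}+2\,\frac{u_r}{r}\cdot\frac{u_\theta}{r}=0,
\]
and a straightforward $L^p$ energy estimate plus Gr\"onwall with (\ref{eq3.17}) yields $\|u_\theta/r\|_{L^\infty_tL^p}\leq\Phi_{2,3}(t)$ for all $p\in[2,\infty)$. With this in hand, your $\omega_\theta$ estimate closes exactly as you intend, and the rest of your argument goes through. (Alternatively, one can avoid this step altogether by doing a \emph{joint} Gr\"onwall on all three vorticity components, splitting the $u_\theta^2$ forcing as $\|u_\theta/r\|_{L^\infty}\|\omega_r\|_{L^p}\lesssim\|\omega_z\|_{L^\infty}\|\omega_r\|_{L^p}$ via Lemma~\ref{Lem2.7}; the resulting coefficient $\|u_r/r\|_{L^\infty}+\|\nabla\times(u_\theta\boldsymbol{e}_\theta)\|_{L^\infty}$ is in $L^1_t$. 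Your initial joint approach was on the right track --- the crude $\|\nabla u\|_{L^\infty}$ bound was unnecessarily pessimistic.)
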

\begin{proof}
According to $(\ref{eq1.3})_2, \frac{u_\theta}{r}$ satisfies
$$
\partial_t \frac{u_\theta}{r}+(u \cdot \nabla) \frac{u_\theta}{r}+2 \frac{u_r}{r} \cdot \frac{u_\theta}{r}=0 .
$$
Multiplying $|\frac{u_\theta}{r}|^{p-2}\frac{u_\theta}{r}$ and integrating over $\mathbb{R}^3$, one derives
$$
\frac{d}{dt}\left\|\frac{u_\theta}{r}(t, \cdot)\right\|_{L^P} \leq 2 \left\|\frac{u_r}{r}(t, \cdot)\right\|_{L^{\infty}}\left\|\frac{u_{\theta}}{r}(t, \cdot)\right\|_{L^p}.
$$
Using Gr\"onwall inequality and (\ref{eq3.17}),
$$
\left\|\frac{u_\theta}{r}(t, \cdot)\right\|_{L^p} \leq\|\frac{u_0}{r}\cdot \boldsymbol{e_\theta}\|_{L^p} \exp \left(2 \int_0^t\left\|\frac{u_r}{r}(s, \cdot)\right\|_{L^{\infty}} d s\right) \leq \Phi_{2,3}(t), \quad \text { for any } \;p \in[2, \infty).
$$
Thus, we can get the $L^p$ estimate of $(\ref{eq3.3})_2$:
$$
\begin{aligned}
\|\omega_\theta(t,\cdot)\|_{L^p}\lesssim& \|\omega_0\cdot \boldsymbol{e_\theta}\|_{L^p}+\|\omega_r\|_{L^1_t L^\infty}\|\frac{u_\theta}{r}\|_{L^\infty_t L^p}+\|h_\theta\|_{L^\infty_t L^\infty}\|\partial_z \mathcal{H}\|_{L^1_t L^p}+\|\nabla\rho\|_{L^1_t L^p}\\
&+\int_0^t \|\omega_\theta(s,\cdot)\|_{L^p}\|\frac{u_r}{r}(s,\cdot)\|_{L^\infty}ds.\\
\end{aligned}
$$
Bying Gr\"onwall inequality and Sobolev inequality, together with Proposition \ref{Pro3.1}, (\ref{eq3.13}), (\ref{eq3.15}) and (\ref{eq3.16}), it follows that
$$
\begin{aligned}
\|\omega_\theta(t,\cdot)\|_{L^2}\leq& \left(\|\omega_0\cdot \boldsymbol{e_\theta}\|_{L^2}+\|\omega_r\|_{L^1_t L^\infty}\|\frac{u_\theta}{r}\|_{L^\infty_t L^2}+\|h_\theta\|_{L^\infty_t L^\infty}\|\nabla \mathcal{H}\|_{L^1_t L^2}+\|\nabla\rho\|_{L^1_t L^2}\right)\\
&\times\exp \left(\int_0^t \|\frac{u_r}{r}(s,\cdot)\|_{L^\infty}ds \right)\\
\leq& [1+\Phi_{2,3}(t)+\sqrt{t}\Phi_{2,3}(t)+\sqrt{t}]\Phi_{2,3}(t)\leq \Phi_{2,3}(t);\\
\|\omega_\theta(t,\cdot)\|_{L^6}\leq& \left(\|\omega_0\cdot \boldsymbol{e_\theta}\|_{L^6}+\|\omega_r\|_{L^1_t L^\infty}\|\frac{u_\theta}{r}\|_{L^\infty_t L^6}+\|h_\theta\|_{L^\infty_t L^\infty}\|\nabla \mathcal{H}\|_{L^1_t L^6}+\|\nabla \rho\|_{L^1_t L^6}\right)\\
&\times\exp \left(\int_0^t \|\frac{u_r}{r}(s,\cdot)\|_{L^\infty}ds \right)\\
\leq& [1+\Phi_{2,3}(t)+\Phi_{1,3}(t)\Phi_{2,3}(t)+\Phi_{1,3}(t)]\Phi_{2,3}(t)\leq \Phi_{2,3}(t),\\
\end{aligned}
$$
for any $t \leq T_\ast$. Then from (\ref{eq2.1}) in Lemma \ref{Lem2.4}, we have
$$
\begin{aligned}
\|\nabla b(t,\cdot)\|_{L^2\cap L^6}\leq \Phi_{2,3}(t).
\end{aligned}
$$

Below, we also need to get an estimate of about $\nabla\left(u_\theta \boldsymbol{e}_{\boldsymbol{\theta}}\right)$. Due to $\nabla \times$ $\left(u_\theta \boldsymbol{e}_{\boldsymbol{\theta}}\right)=w_r \boldsymbol{e}_{\boldsymbol{r}}+w_z \boldsymbol{e}_{\boldsymbol{z}}$ and $\operatorname{div}\left(u_\theta \boldsymbol{e}_{\boldsymbol{\theta}}\right)=0$ and using the argument of the Calderon-Zygmund singular integral operator, it only needs to prove the same estimate for $\left(w_r, w_z\right)$. Therefore, performing the $L^p$ estimates for $(\ref{eq3.3})_1$ and $(\ref{eq3.3})_3$, one derives
$$
\begin{aligned}
&\max \left\{\frac{\mathrm{d}}{\mathrm{d} t}\left\|w_r(t, \cdot)\right\|_{L^p}^p, \frac{\mathrm{d}}{\mathrm{d} t}\left\|w_z(t, \cdot)\right\|_{L^p}^p\right\} \\
\lesssim &  \int_{\mathbb{R}^3}\left(\left|w_r\right|^p+\left|w_z\right|^p\right)|\nabla b| \mathrm{d} x \\
\lesssim& \left\|\nabla \times\left(u_\theta \boldsymbol{e}_{\boldsymbol{\theta}}\right)(t, \cdot)\right\|_{L^{\infty}} \int_{\mathbb{R}^3}\left(\left|w_r\right|^{p-1}+\left|w_z\right|^{p-1}\right)|\nabla b| \mathrm{d} x.
\end{aligned}
$$
Using H\"older's inequality, it follows that
$$
\begin{aligned}
&\max \left\{\frac{\mathrm{d}}{\mathrm{d} t}\left\|w_r(t, \cdot)\right\|_{L^p}^p, \frac{\mathrm{d}}{\mathrm{d} t}\left\|w_z(t, \cdot)\right\|_{L^p}^p\right\} \\
\lesssim&\left\|\nabla \times\left(u_\theta \boldsymbol{e}_{\boldsymbol{\theta}}\right)(t, \cdot)\right\|_{L^{\infty}}\left(\left\|w_r(t, \cdot)\right\|_{L^p}^{p-1}+\left\|w_z(t, \cdot)\right\|_{L^p}^{p-1}\right)\\
&\times\|\nabla b(t, \cdot)\|_{L^p}.
\end{aligned}
$$
By dividing $\left(\left\|w_r(t, \cdot)\right\|_{L^p}^{p-1}+\left\|w_z(t, \cdot)\right\|_{L^p}^{p-1}\right)$ on both sides, it indicates that
$$
\max \left\{\frac{\mathrm{d}}{\mathrm{d} t}\left\|w_r(t, \cdot)\right\|_{L^p}, \frac{\mathrm{d}}{\mathrm{d} t}\left\|w_z(t, \cdot)\right\|_{L^p}\right\} \lesssim\left\|\nabla \times\left(u_\theta \boldsymbol{e}_{\boldsymbol{\theta}}\right)(t, \cdot)\right\|_{L^{\infty}}\|\nabla b(t, \cdot)\|_{L^p}.
$$
Integrating with $t$, for any $p \in[2,6]$, one concludes that:
$$
\begin{aligned}
\left\|\left(w_r, w_z\right)(t, \cdot)\right\|_{L^p}& \lesssim\left\|\left(w_0\cdot \boldsymbol{e_r},w_0\cdot \boldsymbol{e_z}\right)\right\|_{L^p}+\|\nabla b\|_{L_t^{\infty} L^p} \int_0^t\left\|\nabla \times\left(u_\theta \boldsymbol{e}_{\boldsymbol{\theta}}\right)(s, \cdot)\right\|_{L^{\infty}} \mathrm{d} s \\
&\leq \Phi_{2,3}(t).
\end{aligned}
$$
So, we complete the proof of Proposition \ref{Pro3.6}.
\end{proof}

Next, we derive the $L_t^1L^{\infty}$ estimate for the vector field $(\nabla\times u,\nabla\times h,\nabla\rho)$, which is the key to obtaining a higher-order estimation of the solution. And before we do that, we need to get an estimate of $\nabla\partial_z\mathcal{H},\nabla h,\nabla^2 h$ and $\nabla^2 \rho$.
\begin{Pro}\label{Pro3.7} Under the same conditions as Theorem \ref{Th1.1}, the following space-time estimate of $\nabla\partial_z\mathcal{H},\nabla h,\nabla^2 h$ and $\nabla^2 \rho$ holds:
$$
\begin{aligned}
\|(\nabla\partial_z\mathcal{H},\nabla h,\nabla^2 h,\nabla^2 \rho)(t,\cdot)\|_{L^2}^2+\int_0^t\|\nabla(\nabla\partial_z\mathcal{H},\nabla h,\nabla^2 h,\nabla^2 \rho)(s,\cdot)\|_{L^2}^2 \leq \Phi_{2,3}(t).
\end{aligned}
$$
\end{Pro}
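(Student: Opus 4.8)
All four quantities satisfy transport--diffusion equations obtained by differentiating the reformulated system, and none of them carries an external forcing: each is of the form (transport by $u$) + (coupling through $u$, $b$ or $\tfrac{u_r}{r}$) + (dissipation). So the plan is a coupled family of $L^2$ energy estimates in which the dissipation produced by the Laplacians absorbs the top-order terms, closed by Gr\"onwall's inequality together with Propositions~\ref{Pro3.1}--\ref{Pro3.6} and Lemmas~\ref{Lem2.1}, \ref{Lem2.4}, \ref{Lem2.5}. For $h$ I work in Cartesian coordinates: writing $h=(-x_2\mathcal{H},\,x_1\mathcal{H},\,0)$ makes $h$ manifestly smooth across the axis, one has $\partial_t h+u\cdot\nabla h-h\cdot\nabla u-\Delta h=0$ with $h\cdot\nabla u=(-\mathcal{H}u_2,\,\mathcal{H}u_1,\,0)$ (so the stretching term carries \emph{no} derivative of $u$), and testing the $\nabla$- and $\nabla^2$-differentiated equations against $\nabla h$ and $\nabla^2 h$ produces $\|\nabla h\|_{L^2}^2$, $\|\nabla^2 h\|_{L^2}^2$ on the left together with the dissipations $\|\nabla^2 h\|_{L^2}^2$, $\|\nabla^3 h\|_{L^2}^2$. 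For $\partial_z\nabla\mathcal{H}$ I apply $\partial_z$ to $(\ref{eq3.4})_4$ and test against $\partial_z\nabla\mathcal{H}$; as in the proof of Proposition~\ref{Pro3.2}, the operator $\Delta+\tfrac2r\partial_r$ yields $\|\nabla(\partial_z\nabla\mathcal{H})\|_{L^2}^2$ plus a nonnegative term on $\{r=0\}$, and the $\nabla(\tfrac2r)\partial_r\mathcal{H}$ contribution appears with a favorable sign by the same computation, so the axial singularities in the $\mathcal{H}$-equation are harmless. For $\nabla^2\rho$ I apply $\nabla^2$ to the $\rho$-equation and test against $\nabla^2\rho$, obtaining $\tfrac12\tfrac{d}{dt}\|\nabla^2\rho\|_{L^2}^2+\|\nabla^3\rho\|_{L^2}^2=-\int_{\mathbb{R}^3}[\nabla^2,u\cdot\nabla]\rho\cdot\nabla^2\rho\,dx$.

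The right-hand sides are estimated by H\"older's inequality, the Sobolev embedding $\dot H^1\hookrightarrow L^6$ and Gagliardo--Nirenberg; the transport terms vanish on testing since $\nabla\cdot u=0$ (and $\nabla\cdot h=0$ for the term in which a stray derivative is moved onto $h$ itself). The genuinely dangerous contributions are those carrying \emph{two or more} derivatives of $u$ --- $\nabla^2 u\cdot\nabla\rho$ in the $\nabla^2\rho$ estimate, $\partial_z\nabla b\cdot\nabla\mathcal{H}$ in the $\partial_z\nabla\mathcal{H}$ estimate, and $\mathcal{H}\,\nabla^2 u$ together with $\nabla^2 u\cdot\nabla h$ in the $\nabla^2 h$ estimate --- because the inviscid vorticity equation provides no control on $\nabla^2 u$ whatsoever. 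The device is integration by parts: transfer one derivative off the $u$-factor onto the other factor, using $\nabla\cdot h=0$ to kill the term in which it lands on $h$, which leaves products of $\nabla u$ (bounded in $L^2\cap L^6$ by Proposition~\ref{Pro3.6}), $\nabla\tfrac{u_r}{r}$ (bounded in $L^2\cap L^6$ via $\Omega$ by Lemma~\ref{Lem2.5} and $(\ref{eq3.12})$), $\tfrac{u_r}{r}$ (bounded in $L^\infty$ by $(\ref{eq3.17})$) and $\mathcal{H}$ (bounded in $L^\infty$ by $(\ref{eq3.1})$), multiplied by $h_\theta$ (Proposition~\ref{Pro3.5}), $\nabla h$, $\nabla\mathcal{H}$ or $\nabla\rho$ (from $(\ref{eq3.1})$, $(\ref{eq3.13})$, $(\ref{eq3.15})$), and at most one top-order factor --- $\nabla^3 h$, $\nabla^3\rho$ or $\nabla(\partial_z\nabla\mathcal{H})$ --- which is absorbed into the dissipation by Young's inequality. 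One must check, when arranging these integrations by parts, that no factor of order higher than the available dissipation (for instance $\nabla^4 h$) is ever produced; this is exactly why the Cartesian formulation of $h$, in which $h\cdot\nabla u$ contains no $u$-derivative, is convenient.

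I would carry out the estimates in the order $\nabla h$ first, then $\nabla^2 h$, $\nabla^2\rho$ and $\partial_z\nabla\mathcal{H}$ together, inserting the already-proved lower-order bounds as source terms (so that all such time integrals are dominated by $\Phi_{2,3}(t)$) and leaving only time-integrable coefficients in front of the quantity being estimated. Gr\"onwall's inequality, together with $(\ref{eq3.1})$, $(\ref{eq3.12})$, $(\ref{eq3.13})$, $(\ref{eq3.15})$, $(\ref{eq3.16})$, $(\ref{eq3.17})$ and the finiteness of $\|(\nabla h_0,\nabla^2 h_0,\nabla\partial_z\mathcal{H}_0,\nabla^2\rho_0)\|_{L^2}$ (guaranteed by $(u_0,h_0,\rho_0)\in H^m$, $m\geq3$, and $r\rho_0\in L^2$), then gives
$$
\|(\nabla\partial_z\mathcal{H},\nabla h,\nabla^2 h,\nabla^2\rho)(t,\cdot)\|_{L^2}^2+\int_0^t\|\nabla(\nabla\partial_z\mathcal{H},\nabla h,\nabla^2 h,\nabla^2\rho)(s,\cdot)\|_{L^2}^2\,ds\leq\Phi_{2,3}(t),\qquad t\in(0,T_\ast].
$$

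The main obstacle is precisely the elimination of the multiple-$u$-derivative terms: with $\mu=0$ there is no estimate on $\nabla^2 u$, so each such term has to be disposed of purely by integration by parts against the diffusion of $h$, $\rho$, $\mathcal{H}$, and this must be organized so that (i) the derivative landing on $h$ is killed by $\nabla\cdot h=0$, (ii) no derivative exceeding the available dissipation is created, and (iii) the $r^{-1}$, $r^{-2}$ weights generated by differentiating the axisymmetric operators remain on the favorable side of the estimate --- automatic for the $\Delta+\tfrac2r\partial_r$ structure, as in Proposition~\ref{Pro3.2}, and otherwise handled by the Hardy inequality of Lemma~\ref{Lem2.6}. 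Getting (i)--(iii) to hold at once, especially for the stretching term in the $\nabla^2 h$ estimate, is the delicate point.
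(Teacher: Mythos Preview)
Your proposal is correct and follows essentially the same route as the paper: $L^2$ energy estimates on the $\partial_z$-differentiated $(\ref{eq3.4})_4$, the $\nabla$- and $\nabla^2$-differentiated magnetic equation $(\ref{eq1.2})_2$, and the $\nabla^2$-differentiated $\rho$-equation, closed by Gr\"onwall with the bounds of Propositions~\ref{Pro3.1}--\ref{Pro3.6}. The paper's proof is terse and defers the computational details to \cite{17}; your explicit identification of the Cartesian structure $h\cdot\nabla u=\mathcal{H}(-u_2,u_1,0)$ and the integration-by-parts mechanism for the $\nabla^2 u$ contributions is exactly the content hidden behind that reference.
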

\begin{proof} Firstly, we can apply $\partial_z$ on $(\ref{eq3.4})_4$ and perform the $L^2$ inner product to handle the $\nabla\partial_z \mathcal{H}$. Then we can get:
\begin{equation}\label{eq3.18}
\begin{aligned}
\|\nabla\partial_z \mathcal{H}(t,\cdot)\|_{L^2}^2+\int_0^t\|\nabla^2\partial_z\mathcal{H}(s,\cdot)\|_{L^2}^2 ds\leq \Phi_{2,3}(t).
\end{aligned}
\end{equation}
Secondly, we deal with $\nabla h$ and $\nabla^2 h$. Taking $\nabla$ and $\nabla^2$ on $(\ref{eq1.2})_2$ and performing the $L^2$ inner product respectively, we can conclude that:
\begin{equation}\label{eq3.19}
\begin{aligned}
\|\nabla h (t,\cdot)\|_{L^2}^2+\int_0^t\|\nabla^2 h(s,\cdot)\|_{L^2}^2 ds\leq \Phi_{2,3}(t),\\
\|\nabla^2 h (t,\cdot)\|_{L^2}^2+\int_0^t\|\nabla^3 h(s,\cdot)\|_{L^2}^2 ds\leq \Phi_{2,3}(t).
\end{aligned}
\end{equation}
Finally, applying $\nabla^2$ on $(\ref{eq1.3})_5$ and performing the $L^2$ inner product, we derive the estimate for $\rho$:
\begin{equation}\label{eq3.20}
\begin{aligned}
\|\nabla^2 \rho (t,\cdot)\|_{L^2}^2+\int_0^t\|\nabla^3 \rho(s,\cdot)\|_{L^2}^2 ds\leq \Phi_{2,3}(t).
\end{aligned}
\end{equation}
Proofs of the above estimates can be found in \cite{17}. By combining (\ref{eq3.18}), (\ref{eq3.19}) and (\ref{eq3.20}), we complete the proof of Proposition \ref{Pro3.7}.
\end{proof}

We give the $L^1_t L^\infty $ estimate of $\nabla\times u$, $\nabla\times h$, and $\nabla \rho$ in the following.
\begin{Pro}\label{Pro3.8} Under the same conditions as Theorem \ref{Th1.1}, the following $L^1_t L^\infty$ estimates of $\nabla \times u, \nabla \times h$ and $\nabla \rho$ follows
$$
\int_0^t\|(\omega_\theta, \nabla h, \nabla\rho)(s,\cdot)\|_{L^\infty} ds \leq \Phi_{2,3}(t).
$$
\end{Pro}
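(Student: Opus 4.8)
The plan is to split $(\omega_\theta,\nabla h,\nabla\rho)$ into the two ``parabolic'' pieces $\nabla h,\nabla\rho$, for which the estimate follows from a Sobolev embedding, Hölder's inequality and the bounds already established, and the ``transported'' piece $\omega_\theta$, which must be treated through the equation $(\ref{eq3.3})_2$ itself, since no higher Sobolev norm of $u$ is yet available at this stage of the argument.

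For $\nabla\rho$ I would use the embedding $H^2(\mathbb{R}^3)\hookrightarrow L^\infty(\mathbb{R}^3)$, so that $\|\nabla\rho(t,\cdot)\|_{L^\infty}\lesssim\|\nabla\rho\|_{L^2}+\|\nabla^2\rho\|_{L^2}+\|\nabla^3\rho\|_{L^2}$; integrating in time and applying Cauchy--Schwarz to the last term,
\[
\int_0^t\|\nabla\rho(s,\cdot)\|_{L^\infty}\,ds\lesssim t\big(\|\nabla\rho\|_{L^\infty_tL^2}+\|\nabla^2\rho\|_{L^\infty_tL^2}\big)+t^{1/2}\Big(\int_0^t\|\nabla^3\rho(s,\cdot)\|_{L^2}^2\,ds\Big)^{1/2}\le\Phi_{2,3}(t)
\]
by Proposition \ref{Pro3.4} and Proposition \ref{Pro3.7}. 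The identical argument applied to $\nabla h$, using the $L^\infty_tL^2$ bounds for $\nabla h,\nabla^2h$ and the $L^2_t\dot H^1$ bound for $\nabla^2h$ in Proposition \ref{Pro3.7}, gives $\int_0^t\|\nabla h(s,\cdot)\|_{L^\infty}\,ds\le\Phi_{2,3}(t)$. Since $\boldsymbol{e_\theta}$ is independent of $z$, $\partial_z(h_\theta\boldsymbol{e_\theta})=(\partial_z h_\theta)\boldsymbol{e_\theta}$, hence $\|\partial_z h_\theta\|_{L^1_tL^\infty}\le\|\nabla h\|_{L^1_tL^\infty}\le\Phi_{2,3}(t)$, and likewise $\|\partial_r\rho\|_{L^1_tL^\infty}\le\|\nabla\rho\|_{L^1_tL^\infty}\le\Phi_{2,3}(t)$.

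For $\omega_\theta$ I would first rewrite the forcing in $(\ref{eq3.3})_2$: using $\omega_r=-\partial_z u_\theta$ one has $\frac1r\partial_z(u_\theta^2)=\frac{2u_\theta}{r}\partial_z u_\theta=-\frac{2u_\theta}{r}\omega_r$ and $\frac1r\partial_z(h_\theta^2)=\frac{2h_\theta}{r}\partial_z h_\theta$, so that
\[
\partial_t\omega_\theta+(u_r\partial_r+u_z\partial_z)\omega_\theta=\frac{u_r}{r}\omega_\theta-\frac{2u_\theta}{r}\omega_r-\frac{2h_\theta}{r}\partial_z h_\theta-\partial_r\rho .
\]
Propagating this identity along the measure-preserving flow of $u$ and taking the supremum in space gives
\[
\|\omega_\theta(t,\cdot)\|_{L^\infty}\le\|\omega_0\cdot\boldsymbol{e_\theta}\|_{L^\infty}+\int_0^t\Big\|\frac{u_r}{r}(s,\cdot)\Big\|_{L^\infty}\|\omega_\theta(s,\cdot)\|_{L^\infty}\,ds+\Phi_{2,3}(t),
\]
where the final $\Phi_{2,3}(t)$ absorbs $2\int_0^t\|\frac{u_\theta}{r}\|_{L^\infty}\|\omega_r\|_{L^\infty}\,ds$, $2\int_0^t\|\frac{h_\theta}{r}\|_{L^\infty}\|\partial_z h_\theta\|_{L^\infty}\,ds$ and $\int_0^t\|\partial_r\rho\|_{L^\infty}\,ds$: indeed $\|\frac{u_\theta}{r}\|_{L^\infty}\le\Phi_{2,3}(t)$ uniformly (from the proof of Proposition \ref{Pro3.6}) while $\|\omega_r\|_{L^1_tL^\infty}\le C_\ast$ by hypothesis (\ref{eq1.4}) together with $\nabla\times(u_\theta\boldsymbol{e_\theta})=\omega_r\boldsymbol{e_r}+\omega_z\boldsymbol{e_z}$; $\|\frac{h_\theta}{r}\|_{L^\infty}=\|\mathcal{H}\|_{L^\infty}\le\|\mathcal{H}_0\|_{L^\infty}$ by $(\ref{eq3.1})_2$ and $\|\partial_z h_\theta\|_{L^1_tL^\infty}\le\Phi_{2,3}(t)$ from the preceding paragraph; and $\|\partial_r\rho\|_{L^1_tL^\infty}\le\Phi_{2,3}(t)$ as well, while $\|\omega_0\cdot\boldsymbol{e_\theta}\|_{L^\infty}<\infty$ since $\omega_0=\nabla\times u_0\in H^{m-1}\hookrightarrow L^\infty$. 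As $\|\frac{u_r}{r}(t,\cdot)\|_{L^\infty}\le\Phi_{1,3}(t)$ by (\ref{eq3.17}), Grönwall's inequality yields $\|\omega_\theta(t,\cdot)\|_{L^\infty}\le\Phi_{2,3}(t)\exp(\Phi_{1,3}(t))\le\Phi_{2,3}(t)$, whence $\int_0^t\|\omega_\theta(s,\cdot)\|_{L^\infty}\,ds\le t\,\Phi_{2,3}(t)\le\Phi_{2,3}(t)$. Adding the three bounds finishes the proof.

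The main obstacle is the $\omega_\theta$ estimate: because $\omega_\theta$ obeys only a transport equation there is no smoothing to exploit, so $\|\omega_\theta\|_{L^\infty}$ cannot be bounded by a higher Sobolev norm, and the a priori worst forcing term $\frac1r\partial_z(u_\theta^2)$ has to be algebraically recast as $-\frac{2u_\theta}{r}\omega_r$, namely as a product of the globally bounded quantity $\frac{u_\theta}{r}$ with $\omega_r$, whose $L^1_tL^\infty$ bound is precisely the content of the Beale--Kato--Majda-type hypothesis (\ref{eq1.4}); every remaining forcing term is controlled by Propositions \ref{Pro3.1}--\ref{Pro3.7}.
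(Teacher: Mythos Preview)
Your proof is correct and follows essentially the same route as the paper: the parabolic pieces $\nabla h,\nabla\rho$ are controlled via Sobolev embedding and the $L^\infty_tL^2\cap L^2_t\dot H^1$ bounds of Propositions~\ref{Pro3.4} and~\ref{Pro3.7}, while $\omega_\theta$ is handled by integrating $(\ref{eq3.3})_2$ along characteristics, rewriting $\frac1r\partial_z(u_\theta^2)$ as $-2\frac{u_\theta}{r}\omega_r$, and closing with Gr\"onwall against $\|\frac{u_r}{r}\|_{L^\infty}$. The only cosmetic differences are that the paper uses the Gagliardo--Nirenberg form $\|\nabla h\|_{L^\infty}\lesssim\|\nabla h\|_{L^2}^{1/4}\|\nabla^3 h\|_{L^2}^{3/4}$ instead of the full $H^2$ embedding, and it splits the magnetic forcing as $2h_\theta\,\partial_z\mathcal H$ (invoking $\|\partial_z\mathcal H\|_{L^1_tL^\infty}$ from Proposition~\ref{Pro3.7}) rather than your $2\mathcal H\,\partial_z h_\theta$; both decompositions work. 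One small remark: the bound $\|\frac{u_\theta}{r}\|_{L^\infty}\le\Phi_{2,3}(t)$ you cite from Proposition~\ref{Pro3.6} is stated there only for $p\in[2,\infty)$, but the same Gr\"onwall argument on the transport equation for $\frac{u_\theta}{r}$ gives the $L^\infty$ case directly, so the claim is justified.
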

\begin{proof} Due to the equation of $w_\theta$ :
$$
\partial_t w_\theta+\left(u_r \partial_r+u_z \partial_z\right) w_\theta=\frac{u_r}{r} w_\theta+\frac{1}{r} \partial_z\left(u_\theta\right)^2-\frac{1}{r} \partial_z\left(h_\theta\right)^2-\partial_r \rho .
$$
Integrating it along the particle trajectory start at $x \in \mathbb{R}^3$, one knows that
$$
w_\theta(t, X(t, x))=\left(w_0 \cdot \boldsymbol{e}_\theta\right)(x)+\int_0^t\left(\frac{u_r}{r} w_\theta+\frac{1}{r} \partial_z\left(u_\theta\right)^2-\frac{1}{r} \partial_z\left(h_\theta\right)^2-\partial_r \rho\right)(s, X(s, x)) d s .
$$
Taking the $L^{\infty}$ norm with $x \in \mathbb{R}^3$, one derives from the previous estimates:
$$
\begin{aligned}
&\left\|w_\theta(t, \cdot)\right\|_{L^{\infty}} \\
\lesssim&\left\|w_0 \cdot \boldsymbol{e}_\theta\right\|_{L^{\infty}}+\int_0^t\left\|\frac{u_r}{r}(s, \cdot)\right\|_{L^{\infty}}\left\|w_\theta(s, \cdot)\right\|_{L^{\infty}} d s+\left\|\omega_r\right\|_{L^1\left(0, t, L^{\infty}\right)}\left\|\frac{u_\theta}{r}\right\|_{L^{\infty}\left(0, t, L^{\infty}\right)}\\
&+\left\|\partial_z \mathcal{H}\right\|_{L^1\left(0, t, L^{\infty}\right)}\left\|h_\theta\right\|_{L^{\infty}\left(0, t, L^{\infty}\right)}+\|\nabla \rho\|_{L^1\left(0, t, H^2\right)} \\[2mm]
& \leq \Phi_{2,3}(t)+\int_0^t\left\|\frac{u_r}{r}(s, \cdot)\right\|_{L^{\infty}}\left\|w_\theta(s, \cdot)\right\|_{L^{\infty}} d s .
\end{aligned}
$$
Using Gr\"onwall inequality, it indicates that
\begin{equation}\label{eq3.21}
\left\|w_\theta(t, \cdot)\right\|_{L^{\infty}} \leq \Phi_{2,3}(t) \exp \left(\int_0^t\left\|\frac{u_r}{r}(s, \cdot)\right\|_{L^{\infty}} d s\right)\leq\Phi_{2,3}(t).
\end{equation}
By Lemma \ref{Lem2.1} and H\"older's inequality, together with (\ref{eq3.19}), one derives
\begin{equation}\label{eq3.22}
\begin{aligned}
\int_0^t\|\nabla h(s, \cdot)\|_{L^{\infty}} d s & \lesssim \int_0^t\|\nabla h(s, \cdot)\|_{L^2}^{\frac{1}{4}}\left\|\nabla^3 h(s, \cdot)\right\|_{L^2}^{\frac{3}{4}} d s \\
& \lesssim\|\nabla h\|_{L^{\infty}\left(0, t, L^2\right)}\left(\int_0^t\left\|\nabla^3 h(s, \cdot)\right\|_{L^2}^2 d s\right)^{\frac{3}{8}} t^{\frac{5}{8}} \\
& \leq \Phi_{2,3}(t)\left(\Phi_{2,3}(t)\right)^{\frac{3}{8}} t^{\frac{5}{8}} \leq \Phi_{2,3}(t) .
\end{aligned}
\end{equation}
Similarly, using the estimates (\ref{eq3.15}), we get
\begin{equation}\label{eq3.23}
\int_0^t\|\nabla \rho(s, \cdot)\|_{L^{\infty}} d s \leq \Phi_{2,3}(t) .
\end{equation}
Combining (\ref{eq3.21}), (\ref{eq3.22}) and (\ref{eq3.23}), we complete Proposition \ref{Pro3.8} .
\end{proof}

\subsection{The end of the proof of Theorem \ref{Th1.1}} In this part, we will use the estimators obtained above to reach the conclusion of Theorem \ref{Th1.1}.

Applying $\nabla^m (m\in\mathbb{N},m\geq3)$ to $(\ref{eq1.2})_{1,2,3}$ and performing the $L^2$ energy estimate. we can obtain
\begin{equation}\label{eq3.24}
\begin{aligned}
& \frac{1}{2} \frac{d}{d t}\left\|\nabla^m(u, h, \rho)(t, \cdot)\right\|_{L^2}^2+\left\|\nabla^{m+1} h(t, \cdot)\right\|_{L^2}^2+\left\|\nabla^{m+1} \rho(t, \cdot)\right\|_{L^2}^2 \\
= & -\underbrace{\int_{\mathbb{R}^3}\left[\nabla^m, u \cdot \nabla\right] u \nabla^m u d x}_{I_1}+\underbrace{\int_{\mathbb{R}^3}\left[\nabla^m, h \cdot \nabla\right] h \nabla^m u d x}_{I_2}-\underbrace{\int_{\mathbb{R}^3}\left[\nabla^m, u \cdot \nabla\right] h \nabla^m h d x}_{I_3} \\
& +\underbrace{\int_{\mathbb{R}^3}\left[\nabla^m, h \cdot \nabla\right] u \nabla^m h d x}_{I_4}-\underbrace{\int_{\mathbb{R}^3}\left[\nabla^m, u \cdot \nabla\right] \rho \nabla^m \rho d x}_{I_5}+\underbrace{\int_{\mathbb{R}^3} \nabla^m \rho \nabla^m u d x}_{I_6} ,
\end{aligned}
\end{equation}
where we have used
$$
\int_{\mathbb{R}^3}h\cdot\nabla\nabla^m h\cdot\nabla^m udx+\int_{\mathbb{R}^3}h\cdot\nabla\nabla^m u\cdot\nabla^m hdx=0.
$$

By Lemma \ref{Lem2.2}, we have $I_1-I_5$ satisfy
\begin{equation}\label{eq3.25}
I_j \lesssim\left\|\nabla^m(u, h, \rho)(t, \cdot)\right\|_{L^2}^2\|\nabla(u, h, \rho)(t, \cdot)\|_{L^{\infty}}, \quad \forall j=1,2,3,4,5 ,
\end{equation}
and $I_6$ satisfies
\begin{equation}\label{eq3.26}
I_6 \leq\left\|\nabla^m \rho(t, \cdot)\right\|_{L^2}\left\|\nabla^m u(t, \cdot)\right\|_{L^2} \leq\left\|\nabla^m(u, h, \rho)(t, \cdot)\right\|_{L^2}^2 .
\end{equation}
Bring (\ref{eq3.25}) and (\ref{eq3.26}) into (\ref{eq3.24}), we can get
\begin{equation}\label{eq3.27}
\frac{d}{d t}\|(u, h, \rho)(t, \cdot)\|_{H^m}^2 \leq C\left(1+\|\nabla(u, h, \rho)(t, \cdot)\|_{L^{\infty}}\right)\|(u, h, \rho)(t, \cdot)\|_{H^m}^2 .
\end{equation}
Denoting
$$
E_m(t):=\|(u,h,\rho)(t,\cdot)\|_{H^m}^2,\ \ \ \forall t\leq T_\ast.
$$
By Lemma \ref{Lem2.3}, (\ref{eq3.27}) can be written
$$
E_m^{\prime}(t)\lesssim\left(1+\|(\nabla\times u,\nabla\times h,\nabla\rho)(t,\cdot)\|_{L^\infty}\log(e+E_m(t))\right)(e+E_m(t)).
$$
Using the Gr\"onwall inequality twice, one arrives at
\begin{equation}\label{eq3.28}
e+E_m(t)\leq (e+E_m(0))^{\exp\left(C\int_{0}^t(1+\|(\nabla\times u,\nabla h,\nabla\rho)(s,\cdot)\|_{L^\infty}ds\right)},\ \ \ \forall t\leq T_\ast.
\end{equation}
Recalling Proposition \ref{Pro3.8}, one has
$$
\int_0^t\left(1+\|(\nabla \times u, \nabla h, \nabla \rho)(s, \cdot)\|_{L^{\infty}}\right) d s \leq \Phi_{2,3}(t) .
$$
Substituting in (\ref{eq3.28}), one concludes that
$$
\sup _{0 \leq s \leq t}\|(u, h, \rho)(s, \cdot)\|_{H^m}^2 \leq \Phi_{4,3}(t),
$$
for all $m \in \mathbb{N}$. This completes the proof of Theorem \ref{Th1.1}.

\qed

\section{Proof of Theorem \ref{Th1.2}}
This section is devoted to study the inviscid limit of the viscous system \eqref{eq1.5}. Let $(u_\mu, h_\mu, \rho_\mu)$ and $(u, h, \rho)$ be a solution of (\ref{eq1.5}) and (\ref{eq1.2}), respectively. We also denote
$$
\bar{u}_\mu=u_\mu-u,\quad \bar{h}_\mu = h_\mu-h,\quad \bar{p}_\mu=p_\mu-p,\quad \bar{\rho}_\mu = \rho_\mu -\rho\,.
$$
Direct calculation shows $(\bar{u}_\mu, \bar{h}_\mu, \bar{\rho}_\mu)$ satisfies
\begin{equation}\label{eq4.1}
\left\{\begin{array}{l}
\partial_t \bar{u}_\mu+(\bar{u}_\mu+u) \cdot \nabla \bar{u}_\mu -\mu\Delta(\bar{u}_\mu+u)= - \bar{u}_\mu\cdot \nabla u +\bar{h}_\mu\cdot\nabla h+h_\mu \cdot \nabla \bar{h}_\mu -\nabla \bar{p}_\mu+\bar{\rho}_\mu e_3, \\[2mm]
\partial_t \bar{h}_\mu+u_\mu \cdot \nabla \bar{h}_\mu-\Delta \bar{h}_\mu=-\bar{u}_\mu\cdot\nabla h +\bar{h}_\mu \cdot \nabla u+h_\mu\cdot\nabla\bar{u}_\mu, \\[2mm]
\partial_t \bar{\rho}_\mu+u_\mu \cdot \nabla \bar{\rho}_\mu-\Delta \bar{\rho}_\mu=-\bar{u}_\mu\cdot\nabla\rho , \\[2mm]
\nabla \cdot \bar{u}_\mu=\nabla \cdot \bar{h}_\mu=0.
\end{array}\right.
\end{equation}
By a standard $L^2$-energy method, combined with $\nabla\cdot\bar{u}_\mu=\nabla\cdot\bar{h}_\mu=0 $, we deduce from $(\ref{eq4.1})_1$ that
$$
\begin{aligned}
&\frac{1}{2} \frac{d}{d t}\left\|\bar{u}_\mu(t)\right\|_{L^2}^2+\mu\int_{\mathbb{R}_3}|\nabla\bar{u}_\mu(t,x)|^2dx\\
=& \mu \int_{\mathbb{R}^3} \Delta u\cdot \bar{u}_\mu dx-\int_{\mathbb{R}^3}\left(\bar{u}_\mu \cdot \nabla u\right) \cdot \bar{u}_\mu dx+\int_{\mathbb{R}^3}\left(\bar{h}_\mu \cdot \nabla h\right) \cdot \bar{u}_\mu dx \\
&-\int_{\mathbb{R}^3}\left(h_\mu \cdot \nabla \bar{u}_\mu\right) \cdot \bar{h}_\mu d x
+\int_{\mathbb{R}^3}\left(\bar{\rho}_\mu \cdot \bar{u}_\mu\right)dx.
\end{aligned}
$$
Using the same method for $\bar{h}_\mu$ and $\bar{\rho}_\mu$, we can get
$$
\begin{aligned}
\frac{1}{2} \frac{d}{d t}\left\|\bar{h}_\mu(t)\right\|_{L^2}^2+\int_{\mathbb{R}^3}\left|\nabla \bar{h}_\mu(t, x)\right|^2 d x= & -\int_{\mathbb{R}^3}\left(\bar{u}_\mu \cdot \nabla h\right) \cdot \bar{h}_\mu d x+\int_{\mathbb{R}^3}\left(\bar{h}_\mu \cdot \nabla u\right) \cdot \bar{h}_\mu d x \\
& +\int_{\mathbb{R}^3}\left(h_\mu \cdot \nabla \bar{u}_\mu\right) \cdot \bar{h}_\mu dx ,
\end{aligned}
$$
and
$$
\begin{aligned}
\frac{1}{2} \frac{d}{d t}\left\|\bar{\rho}_\mu(t)\right\|_{L^2}^2+\int_{\mathbb{R}^3}\left|\nabla \bar{\rho}_\mu(t, x)\right|^2 d x= & -\int_{\mathbb{R}^3}\left(\bar{u}_\mu \cdot \nabla \rho\right) \cdot \bar{\rho}_\mu d x.
\end{aligned}
$$
Combining with the above estimates, and through 
$$
\mu\int_{\mathbb{R}^3}|\nabla \bar{u}_\mu(t,x)|^2dx\geq 0, \int_{\mathbb{R}^3}|\nabla \bar{h}_\mu(t,x)|^2dx\geq 0, \int_{\mathbb{R}^3}|\nabla \bar{\rho}_\mu(t,x)|^2dx\geq 0,
$$
we derive
$$
\begin{aligned}
\frac{1}{2} \frac{d}{d t}\left(\left\|\bar{u}_\mu(t)\right\|_{L^2}^2+\left\|\bar{h}_\mu(t)\right\|_{L^2}^2+\left\|\bar{\rho}_\mu(t)\right\|_{L^2}^2\right)\leq& \mu \int_{\mathbb{R}^3} \Delta u \cdot \bar{u}_\mu dx\\
& -\int_{\mathbb{R}^3}\left(\bar{u}_\mu \cdot \nabla u\right) \cdot \bar{u}_\mu \mathrm{d} x+\int_{\mathbb{R}^3}\left(\bar{h}_\mu \cdot \nabla h\right) \cdot \bar{u}_\mu dx \\
& -\int_{\mathbb{R}^3}\left(\bar{u}_\mu \cdot \nabla h\right) \cdot \bar{h}_\mu \mathrm{d} x+\int_{\mathbb{R}^3}\left(\bar{h}_\mu \cdot \nabla u\right) \cdot \bar{h}_\mu d x \\
& -\int_{\mathbb{R}^3}\left(\bar{u}_\mu \cdot \nabla \rho\right) \cdot \bar{\rho}_\mu \mathrm{d} x+\int_{\mathbb{R}^3}\left(\bar{\rho}_\mu \cdot \bar{u}_\mu\right)d x.
\end{aligned}
$$
Using the Cauchy-Schwartz inequality, it knows that
$$
\begin{aligned}
\frac{1}{2} \frac{d}{d t}\left(\left\|\bar{u}_\mu(t)\right\|_{L^2}^2+\left\|\bar{h}_\mu(t)\right\|_{L^2}^2+\left\|\bar{\rho}_\mu(t)\right\|_{L^2}^2\right)\leq&  \left\|\bar{u}_\mu\right\|_{L^2}\left(\mu\left\|\Delta u\right\|_{L^2}\right)\\
&+\left\|\bar{u}_\mu\right\|_{L^2}\left\|\bar{u}_\mu \cdot \nabla u\right\|_{L^2}+\left\|\bar{u}_\mu\right\|_{L^2}\left\|\bar{h}_\mu \cdot \nabla h\right\|_{L^2} \\
& +\left\|\bar{h}_\mu\right\|_{L^2}\left\|\bar{u}_\mu \cdot \nabla h\right\|_{L^2}+\left\|\bar{h}_\mu\right\|_{L^2}\left\|\bar{h}_\mu \cdot \nabla u\right\|_{L^2}\\
& +\left\|\bar{\rho}_\mu\right\|_{L^2}\left\|\bar{u}_\mu \cdot \nabla \rho\right\|_{L^2}+\left\|\bar{\rho}_\mu\right\|_{L^2}\left\|\bar{u}_\mu \right\|_{L^2}.
\end{aligned}
$$
Now, we integrate the above inequality over time. Noticing that $(\bar{u}_\mu, \bar{h}_\mu, \bar{\rho}_\mu)$ has zero initial data, we derives
$$
\begin{aligned}
\left\|\bar{u}_\mu\right\|_{L_t^{\infty} L^2}^2+\left\|\bar{h}_\mu\right\|_{L_t^{\infty} L^2}^2+\left\|\bar{\rho}_\mu\right\|_{L_t^{\infty} L^2}^2 \lesssim&\,\,\,\,\,\,\left\|\bar{u}_\mu\right\|_{L_t^{\infty} L^2}\left\|\bar{u}_\mu \cdot \nabla u\right\|_{L_t^1 L^2}+\left\|\bar{u}_\mu\right\|_{L_t^{\infty} L^2}\left\|\bar{h}_\mu \cdot \nabla h\right\|_{L_t^1 L^2} \\
& +\left\|\bar{h}_\mu\right\|_{L_t^{\infty} L^2}\left\|\bar{u}_\mu \cdot \nabla h\right\|_{L_t^1 L^2} +\left\|\bar{h}_\mu\right\|_{L_t^{\infty} L^2}\left\|\bar{h}_\mu \cdot \nabla u\right\|_{L_t^1 L^2} \\
&+\left\|\bar{\rho}_\mu\right\|_{L_t^{\infty} L^2}\left\|\bar{u}_\mu \cdot \nabla \rho\right\|_{L_t^1 L^2} +\left\|\bar{\rho}_\mu\right\|_{L_t^{\infty} L^2}\left\|\bar{u}_\mu\right\|_{L_t^1 L^2}\\
&+\left\|\bar{u}_\mu\right\|_{L_t^{\infty} L^2}\left(\mu\left\|\Delta u\right\|_{L_t^1 L^2}\right).\\
\end{aligned}
$$
Using Young's inequality, it yields
\begin{equation}\label{eq4.2}
\begin{aligned}
\left\|\bar{u}_\mu\right\|_{L_t^{\infty} L^2}^2+\left\|\bar{h}_\mu\right\|_{L_t^{\infty} L^2}^2+\left\|\bar{\rho}_\mu\right\|_{L_t^{\infty} L^2}^2 \lesssim&\,\,\,\,\,\,\left\|\bar{u}_\mu \cdot \nabla u\right\|_{L_t^1 L^2}^2+\left\|\bar{h}_\mu \cdot \nabla h\right\|_{L_t^1 L^2}^2+\left\|\bar{u}_\mu \cdot \nabla h\right\|_{L_t^1 L^2}^2\\
&+\left\|\bar{h}_\mu \cdot \nabla u\right\|_{L_t^1 L^2}^2+\left\|\bar{u}_\mu \cdot \nabla \rho\right\|_{L_t^1 L^2}^2 +\left\|\bar{u}_\mu\right\|_{L_t^1 L^2}^2\\
&+\left(\mu\left\|\Delta u\right\|_{L_t^1 L^2}\right)^2.\\
\end{aligned}
\end{equation}
We know from Young's inequality that
\begin{equation}\label{eq4.3}
\begin{aligned}
&2\left(\left\|\bar{u}_\mu\right\|_{L_t^{\infty} L^2}\left\|\bar{h}_\mu\right\|_{L_t^{\infty} L^2}+\left\|\bar{u}_\mu\right\|_{L_t^{\infty} L^2}\left\|\bar{\rho}_\mu\right\|_{L_t^{\infty} L^2}+\left\|\bar{h}_\mu\right\|_{L_t^{\infty} L^2}\left\|\bar{\rho}_\mu\right\|_{L_t^{\infty} L^2} \right)\\[2mm]
& \leq 2\left(\left\|\bar{u}_\mu\right\|_{L_t^{\infty} L^2}^2+\left\|\bar{h}_\mu\right\|_{L_t^{\infty} L^2}^2+\left\|\bar{\rho}_\mu\right\|_{L_t^{\infty} L^2}^2 \right).
\end{aligned}
\end{equation}
Combining with (\ref{eq4.2}), (\ref{eq4.3}) and employing that $(a+b+c)^2=a^2+b^2+c^2+2ab+2ac+2bc$, we obtain
$$
\begin{aligned}
\left(\left\|\bar{u}_\mu\right\|_{L_t^{\infty} L^2}+\left\|\bar{h}_\mu\right\|_{L_t^{\infty} L^2}+\left\|\bar{\rho}_\mu\right\|_{L_t^{\infty} L^2} \right)^2\lesssim&\,\,\,\,\,\,\left\|\bar{u}_\mu \cdot \nabla u\right\|_{L_t^1 L^2}^2+\left\|\bar{h}_\mu \cdot \nabla h\right\|_{L_t^1 L^2}^2+\left\|\bar{u}_\mu \cdot \nabla h\right\|_{L_t^1 L^2}^2\\
&+\left\|\bar{h}_\mu \cdot \nabla u\right\|_{L_t^1 L^2}^2+\left\|\bar{u}_\mu \cdot \nabla \rho\right\|_{L_t^1 L^2}^2 +\left\|\bar{u}_\mu\right\|_{L_t^1 L^2}^2\\
&+\left(\mu\left\|\Delta u\right\|_{L_t^1 L^2}\right)^2.\\
\end{aligned}
$$
This indicates
\begin{equation}\label{eq4.4}
\begin{aligned}
\left\|\bar{u}_\mu\right\|_{L_t^{\infty} L^2}+\left\|\bar{h}_\mu\right\|_{L_t^{\infty} L^2}+\left\|\bar{\rho}_\mu\right\|_{L_t^{\infty} L^2}
\lesssim&\,\,\,\,\,\,\left\|\bar{u}_\mu \cdot \nabla u\right\|_{L_t^1 L^2}+\left\|\bar{h}_\mu \cdot \nabla h\right\|_{L_t^1 L^2}+\left\|\bar{u}_\mu \cdot \nabla h\right\|_{L_t^1 L^2}\\
&+\left\|\bar{h}_\mu \cdot \nabla u\right\|_{L_t^1 L^2}+\left\|\bar{u}_\mu \cdot \nabla \rho\right\|_{L_t^1 L^2} +\left\|\bar{u}_\mu\right\|_{L_t^1 L^2}\\
&+\mu\left\|\Delta u\right\|_{L_t^1 L^2}.\\
\end{aligned}
\end{equation}
Denoting
$$
\mathscr{L}(t):=\left\|\bar{u}_\mu\right\|_{L_t^{\infty} L^2}+\left\|\bar{h}_\mu\right\|_{L_t^{\infty}L^2}+\left\|\bar{\rho}_\mu\right\|_{L_t^{\infty} L^2}\,.
$$
Clearly,
\[
\mathscr{L}(0)=0.
\]
Then applying H\"older's inequality and Young's inequality for the right side of (\ref{eq4.4}), we get
$$
\begin{aligned}
\mathscr{L}(t)\lesssim\mu\int_0^t\|\Delta u(\tau)\|_{L^2}d\tau+\int_0^t\left(1+\|\nabla u(\tau)\|_{L^\infty}+\|\nabla h(\tau)\|_{L^\infty}+\|\nabla \rho(\tau)\|_{L^\infty}\right)\mathscr{L}(\tau)d\tau\,.
\end{aligned}
$$
The Gr\"onwall inequality leads to
\begin{equation}\label{eq4.5}
\mathscr{L}(t)\lesssim e^{\int_0^t\left(1+\|\nabla u(\tau)\|_{L^\infty}+\|\nabla h(\tau)\|_{L^\infty}+\|\nabla \rho(\tau)\|_{L^\infty}\right)d\tau}\left(\mu\int_0^t\|\Delta u(\tau)\|_{L^2}d\tau\right).
\end{equation}
Using Sobolev interpolation and H\"older's inequality, if $m\geq 3$, (\ref{eq4.5}) follows that
$$
\left\|\bar{u}_\mu\right\|_{L_t^{\infty}L^2}+\left\|\bar{h}_\mu\right\|_{L_t^{\infty}L^2}+\left\|\bar{\rho}_\mu\right\|_{L_t^{\infty}L^2}\leq (\mu t)\|u\|_{L_t^{\infty}H^m}.
$$
Recalling Theorem \ref{Th1.1}, we arrive at
$$
\left\|u_\mu-u\right\|_{L_t^{\infty}L^2}+\left\|h_\mu-h\right\|_{L_t^{\infty}L^2}+\left\|\rho_\mu-\rho\right\|_{L_t^{\infty}L^2}\leq (\mu t)\Phi_{4,3}(t).
$$
This completes the proof of Theorem \ref{Th1.2}.

\vspace{1cm} 

\noindent 
\hspace*{1em} 
ZHAOJUN XING\\
School of Mathematics and Statistics, Nanjing University of Information Science and Technology, Nanjing 210044, China\\
\hspace*{1em}
Email address: zhaojunxing@nuist.edu.cn


\begin{thebibliography}{1}
\bibitem{01}{\sc Bian, D., Gui, G.}: On 2-D Boussinesq equations for MHD convection with stratification effects. J. Differ. Equ. $\mathbf{261}$(3), 1669--1711 (2016).
\bibitem{02}{\sc Bian, D., Liu, J.}: Initial-boundary value problem to 2D Boussinesq equations for MHD convection with stratification effects. J. Differ. Equ. $\mathbf{263}$(12), 8074--8101 (2017).
\bibitem{03}{\sc Bian, D., Pu, X.}: Global smooth axisymmetic solutions of the Boussinesq equations for magnetohydrodynamics convection. J. Math. Fluid Mech. $\mathbf{22}$(1), 12 (2020).
\bibitem{04}{\sc Chae, D., Lee, J.}: On the regularity of the axisymmetric solutions of the Navier-Stokes equations. Math. Z. $\mathbf{239}$(4), 645--671 (2002).
\bibitem{05}{\sc Chen, Q., Zhang, Z.}: Regularity criterion of axisymmetric weak solutions to the 3D Navier-Stokes equations. J. Math. Anal. Appl. $\mathbf{331}$(2), 1384--1395 (2007).
\bibitem{06}{\sc Constantin, P., Kukavica, I., and Vicol, V.}: On the inviscid limit of the Navier-Stokes equations. Proc. Amer. Math. Soc. $\mathbf{143}$, 3075--3090 (2015).
\bibitem{07}{\sc D\'iaz, J., Lerena, M.}: On the inviscid and non-resistive limit for the equations of incompressible magnetohydrodynamics. Math. Models Methods Appl. Sci. $\mathbf{12}$(10), 1401--1419 (2002).
\bibitem{08}{\sc Itoh, S.}: On the vanishing viscosity in the Cauchy problem for the equations of a nonhomogeneous incompressible fluids. Glasg Math J. $\mathbf{36}$, 123--129 (1994).
\bibitem{09}{\sc Itoh, S., Tani, A.}: Solvability of nonstationary problems for nonhomogeneous incompressible fluids and convergence with vanishing viscosity. Tokyo J. Math. $\mathbf{22}$, 17--42 (1999).
\bibitem{10}{\sc Kato T.}: Nonstationary flows of viscous and ideal fluids in $\mathbb{R}^3$. J. Funct. Anal. $\mathbf{9}$, 296--305 (1972).
\bibitem{11}{\sc Kato T.}: Quasi-linear equations of evolution, with applications to partial differential equations. Lect Notes in Math. $\mathbf{48}$, 25--70 (1975).
\bibitem{12}{\sc Liu, H., Bian, D. and Pu, X.}: Global well-posedness of the 3D Boussinesq-MHD system without heat diffusion. Z. Angew. Math. Phys. $\mathbf{70}$, 81 (2019).
\bibitem{13}{\sc Lei, Z.}: On axially symmetric incompressible magnetohydrodynamics in three dimensions. J. Differ. Equ. $\mathbf{259}$(7), 3202--3215 (2015).
\bibitem{14}{\sc Larios, A., Pei, Y.}: On the local well-posedness and a Prodi-Serrin-type regularity criterion of the three-dimensional MHD-Boussinesq system without thermal dffusion. J. Differential Equations. $\mathbf{263}$(2), 1419--1450 (2017).
\bibitem{15}{\sc Li, Z.}: Critical conditions on $w^\theta$ imply the regularity of axially symmetric MHD-Boussinesq systems. J. Math. Anal. Appl. $\mathbf{505}$(1), paper number: 125451 (2022).
\bibitem{15}{\sc Li, Z., Pan, X.}: BKM-type blow-up criterion of the inviscid axially symmetric Boussinesq system involving a single component of velocity. Z. Angew. Math. Phys. $\mathbf{74}$, article number 9 (2023).
\bibitem{16}{\sc Li, Z., Pan, X.}: One component regularity criteria for the axially symmetric MHD-Boussinesq system. Discrete Contin. Dyn. Syst. $\mathbf{42}$(5), 2333--2353 (2022).
\bibitem{17}{\sc Li, Z., Xing, Z. and Yang, M.}: On the Large Data Global Well-Posedness of Inviscid Axially Symmetric MHD-Boussinesq System. Acta Appl. Math. $\mathbf{187}$, 15 (2023).
\bibitem{18}{\sc Majda, A.}: Introduction to PDEs and Waves for the Atmosphere and Ocean. AMS/CIMS, Providence (2003).
\bibitem{19}{\sc Miao, C., Zheng, X.}: On the global well-posedness for the Boussinesq system with horizontal dissipation. Commun. Math. Phys. $\mathbf{321}$(1), 33--67 (2013).
\bibitem{20}{\sc Marchioro, C., Pulvirenti, M.}: Mathematical Theory of Incompressible Nonviscous Fluids. Springer, New York (1994).
\bibitem{21}{\sc Mulone, G., Rionero, S.}: Necessary and sufficient conditions for nonlinear stability in the magnetic B\'enard problem. Arch. Ration. Mech. Anal. $\mathbf{166}$(3), 197--218 (2003).
\bibitem{22}{\sc Majda, A.}: Vorticity and the mathematical theory of an incompressible fluid flow. Commun. Pure Appl. Math. $\mathbf{39}$, 187--220 (1986).
\bibitem{23}{\sc Masmoudi, N.}: Remarks about the inviscid limit of the Navier–Stokes system. Commun. Math. Phys. $\mathbf{270}$, 777--788 (2007).
\bibitem{24}{\sc Maafa, Y., Zerguine, M.}: Inviscid limit for the full viscous MHD system with critical axisymmetric initial data. Applicable Analysis, $\mathbf{102}$, 343--383 (2022).
\bibitem{25}{\sc Pedlosky, J.}: Geophysical Fluid Dynamics. Springer, New York (1987).
\bibitem{26}{\sc Pan, X.}: Global regularity for the 3D non-diffusive MHD-Boussinesq system with axisymmetric data. Acta Appl. Math. $\mathbf{180}$(6), 18 (2022).
\bibitem{27}{\sc Swann, H.}: The Convergence with Vanishing Viscosity of Nonstationary Navier-Stokes Flow to Ideal Flow in $\mathbb{R}^3$. Trans. Amer. Math. Soc. $\mathbf{157}$, 373--397 (1971).
\end{thebibliography}
\end{document}